\providecommand{\U}[1]{\protect\rule{.1in}{.1in}}
\theoremstyle{definition}
\newtheorem{theo}{Theorem}[section]
\newenvironment{theorem}[1][]
{\begin{theo}[#1]\begin{leftbar}}
{\end{leftbar}\end{theo}}
\newtheorem{lem}[theo]{Lemma}
\newenvironment{lemma}[1][]
{\begin{lem}[#1]\begin{leftbar}}
{\end{leftbar}\end{lem}}
\newtheorem{prop}[theo]{Proposition}
\newenvironment{proposition}[1][]
{\begin{prop}[#1]\begin{leftbar}}
{\end{leftbar}\end{prop}}
\newtheorem{defi}[theo]{Definition}
\newenvironment{definition}[1][]
{\begin{defi}[#1]\begin{leftbar}}
{\end{leftbar}\end{defi}}
\newtheorem{remk}[theo]{Remark}
\newenvironment{remark}[1][]
{\begin{remk}[#1]\begin{leftbar}}
{\end{leftbar}\end{remk}}
\newtheorem{coro}[theo]{Corollary}
\newtheorem{conv}[theo]{Convention}
\newenvironment{condition}[1][]
{\begin{conv}[#1]\begin{leftbar}}
{\end{leftbar}\end{conv}}
\newtheorem{quest}[theo]{TODO}
\newenvironment{todo}[1][]
{\begin{quest}[#1]\begin{leftbar}}
{\end{leftbar}\end{quest}}
\newtheorem{warn}[theo]{Warning}
\newtheorem{conj}[theo]{Conjecture}
\newtheorem{exmp}[theo]{Example}
\let\sumnonlimits\sum
\let\prodnonlimits\prod
\renewcommand{\sum}{\sumnonlimits\limits}
\renewcommand{\prod}{\prodnonlimits\limits}
\begin{document}

\title{The signed random-to-top operator on tensor space (draft)}
\author{Darij Grinberg}
\date{\today}
\maketitle

\section{Introduction}

The purpose of this note is to answer a question I asked in 2010 in \cite{mo}.
It concerns the kernel of a certain operator on the tensor algebra $T\left(
L\right)  $ of a free module $L$ over a commutative ring $\mathbf{k}$ (an
operator that picks out a factor from a tensor and moves it to the front, and
takes an alternating sum of the results ranging over all factors -- an
algebraic version of what probabilists call the \textquotedblleft
random-to-top shuffle\textquotedblright, albeit with signs). Originating in
pure curiosity, this question has been tempting me with its apparent
connections to the random-to-top and random-to-random shuffling operators as
studied in \cite{rsw} and \cite{schocker}. I have not (yet?) grown any wiser
from these connections, but I was able to answer the question (with some help
from a 1950 paper by Specht \cite{specht}), and the answer seems (to me) to be
interesting enough to warrant some publicity.

We shall \textbf{not} use the notations of \cite{mo} (indeed, our notations in
the following will be incompatible with those in \cite{mo}).

\subsection{Outline}

Let me outline what will be proven in this note. (Everything mentioned here
will be defined again in more detail later on.)

We fix a commutative ring $\mathbf{k}$ and a $\mathbf{k}$-module $L$, and we
consider the tensor algebra $T\left(  L\right)  $. We define a $\mathbf{k}%
$-linear map $\mathbf{t}:T\left(  L\right)  \rightarrow T\left(  L\right)  $
by setting%
\[
\mathbf{t}\left(  u_{1}\otimes u_{2}\otimes\cdots\otimes u_{k}\right)
=\sum_{i=1}^{k}\left(  -1\right)  ^{i-1}u_{i}\otimes u_{1}\otimes u_{2}%
\otimes\cdots\otimes\widehat{u_{i}}\otimes\cdots\otimes u_{k}%
\]
for all pure tensors $u_{1}\otimes u_{2}\otimes\cdots\otimes u_{k}\in T\left(
L\right)  $.\ \ \ \ \footnote{This map $\mathbf{t}$ is the map $-T$ defined in
\cite{mo}.}

Roughly speaking, what the map $\mathbf{t}$ does to a pure tensor can be
described as picking out the $i$-th tensorand and moving it to the front of
the tensor, multiplying the new tensor with $\left(  -1\right)  ^{i-1}$, and
summing the result over all $i$'s. Thus, the map $\mathbf{t}$ is a signed
multilinear analogue of the \textquotedblleft random-to-top shuffling
operator\textquotedblright\ known from combinatorics (essentially the element
$\sum_{i=1}^{k}\left(  -1\right)  ^{i-1}\left(  1,2,\ldots,i\right)  $ of the
group algebra $\mathbf{k}S_{k}$, acting on $L^{\otimes k}$). Alternatively, we
can view the restriction of $\mathbf{t}$ to $L^{\otimes k}$ as the action of
the \textquotedblleft random-to-top shuffling element\textquotedblright%
\ $\sum_{i=1}^{k}\left(  1,2,\ldots,i\right)  \in\mathbf{k}S_{k}$ (this is the
antipode of the $\Xi_{n,1}$ of \cite{schocker}) on $L^{\otimes k}$ via the
$\mathbf{k}S_{k}$-module structure on $L^{\otimes k}$ which is given by
permuting the $k$ tensorands, twisted with the sign representation. For $L$ a
free $\mathbf{k}$-module of rank $\geq k$, this $\mathbf{k}S_{k}$-module
structure is faithful, and so from the behavior of $\mathbf{t}$ one can draw
conclusions about the random-top-shuffling operator.

Our main goal in the first few sections is to describe the kernel of the map
$\mathbf{t}$. One of our first observations (Proposition \ref{prop.2}) is that
if $L$ is a free $\mathbf{k}$-module, then this kernel is the set of all
tensors $U\in T\left(  L\right)  $ which are annihilated by $\partial
_{g}^{\prime}$ for all $g\in L$, where the maps $\partial_{g}^{\prime}$ are
certain \textquotedblleft interior product\textquotedblright\ operators (see
Definition \ref{def.deltag} for a precise definition). This rather simple fact
will come out useful in understanding $\operatorname*{Ker}\mathbf{t}%
$.\ \ \ \ \footnote{This is very close to what I wrote about bilinear forms in
\cite{mo}, but the use of bilinearity instead of linearity was a red herring.}

Once this is proven, we will come to the actual description of
$\operatorname*{Ker}\mathbf{t}$. The tensor algebra $T\left(  L\right)  $ is
$\mathbb{Z}_{2}$-graded, and thus a superalgebra. Thus, any two elements $U$
and $V$ of $T\left(  L\right)  $ have a supercommutator $\left[  U,V\right]
_{\operatorname*{s}}$ (which equals $UV-\left(  -1\right)  ^{nm}VU$ if $U$ and
$V$ are homogeneous of degrees $n$ and $m$; otherwise it is determined by
$\mathbf{k}$-bilinearity). Define

\begin{itemize}
\item a sequence $\left(  L_{1},L_{2},L_{3},\ldots\right)  $ of $\mathbf{k}%
$-submodules of $T\left(  L\right)  $ recursively by $L_{1}=L$ and
$L_{i+1}=\left[  L,L_{i}\right]  _{\operatorname*{s}}$;

\item a $\mathbf{k}$-submodule $\overline{\mathfrak{g}}$ of $T\left(
L\right)  $ by $\overline{\mathfrak{g}}=L_{2}+L_{3}+L_{4}+\cdots$;

\item a $\mathbf{k}$-submodule $P$ of $T\left(  L\right)  $ as the
$\mathbf{k}$-linear span of all $xx$ for $x\in L$.
\end{itemize}

(Notice that if $2$ is invertible in the ground ring $\mathbf{k}$, then
$P\subseteq L_{2}\subseteq\overline{\mathfrak{g}}$.)

Then, $\operatorname*{Ker}\mathbf{t}$ is the $\mathbf{k}$-subalgebra of
$T\left(  L\right)  $ generated by $\overline{\mathfrak{g}}+P$, at least when
$L$ is a free $\mathbf{k}$-module. This result (Theorem \ref{thm.Kert} below)
will be proven after several auxiliary observations. Our proof will rely on
ideas of Wilhelm Specht in his 1950 paper \cite{specht} on (what would now be
called) PI-algebras (specifically, Sections V and VI of said paper). Specht
characterized \textquotedblleft properly $n$-linear forms\textquotedblright%
\footnote{In the original: \textquotedblleft eigentlich $n$-fach lineare
Formen\textquotedblright.}, which, in our notations, would correspond to
multilinear elements of $\operatorname*{Ker}\mathbf{t}$ when $L$ is the free
$\mathbf{k}$-module $\mathbf{k}^{n}$. (The correspondence is not immediate --
Specht's analogue of the map $\mathbf{t}$ has no $\left(  -1\right)  ^{i-1}$
signs.) The fact that we consider arbitrary, not just multilinear, elements of
$T\left(  L\right)  $ somewhat complicates our arguments (and prevents us from
going as deep as Specht did -- e.g., we shall not find a basis for
$\operatorname*{Ker}\mathbf{t}$, although this appears to be doable using
Lyndon methods).

Then, we will study an \textquotedblleft unsigned\textquotedblright\ analogue
of the map $\mathbf{t}$. Namely, we will define a $\mathbf{k}$-linear map
$\mathbf{t}^{\prime}:T\left(  L\right)  \rightarrow T\left(  L\right)  $ by
setting%
\[
\mathbf{t}^{\prime}\left(  u_{1}\otimes u_{2}\otimes\cdots\otimes
u_{k}\right)  =\sum_{i=1}^{k}u_{i}\otimes u_{1}\otimes u_{2}\otimes
\cdots\otimes\widehat{u_{i}}\otimes\cdots\otimes u_{k}%
\]
for all pure tensors $u_{1}\otimes u_{2}\otimes\cdots\otimes u_{k}\in T\left(
L\right)  $. From a superalgebraic viewpoint, $\mathbf{t}$ and $\mathbf{t}%
^{\prime}$ are particular cases of a common general construction, but we will
witness their kernels behaving differently when the additive group of
$\mathbf{k}$ is not torsionfree. I am not able to describe
$\operatorname*{Ker}\left(  \mathbf{t}^{\prime}\right)  $ in the same
generality as $\operatorname*{Ker}\mathbf{t}$ (for arbitrary $\mathbf{k}$),
but we will see separate descriptions of $\operatorname*{Ker}\left(
\mathbf{t}^{\prime}\right)  $

\begin{itemize}
\item in the case when the additive group of $\mathbf{k}$ is torsionfree
(Theorem \ref{thm.Kert'}), and

\item in the case when $\mathbf{k}$ is a commutative $\mathbb{F}_{p}$-algebra
for some prime $p$ (Theorem \ref{thm.Kert''}).
\end{itemize}

Much of our reasoning related to $\operatorname*{Ker}\mathbf{t}$ will apply to
$\operatorname*{Ker}\left(  \mathbf{t}^{\prime}\right)  $ as long as some
changes are made; supercommutators are replaced by commutators, the
$\mathbf{k}$-submodule $P$ is replaced by either $0$ (when $\mathbf{k}$ is
torsionfree) or the $\mathbf{k}$-submodule of $T\left(  L\right)  $ spanned by
$x^{p}$ for all $x\in L$ (when $\mathbf{k}$ is an $\mathbb{F}_{p}$-algebra).

It has come to my attention that the description of $\operatorname*{Ker}%
\left(  \mathbf{t}^{\prime}\right)  $ (Theorem \ref{thm.Kert'}) in the case
when $\mathbf{k}$ is a $\mathbb{Q}$-algebra is a consequence of Amy Pang's
\cite[Theorem 5.1]{pang} (applied to $\mathcal{H}=T\left(  L\right)  $, $q=1$
and $j=0$). (Actually, when $\mathbf{k}$ is a $\mathbb{Q}$-algebra,
\cite[Theorem 5.1]{pang} gives a basis of each eigenspace of $\mathbf{t}%
^{\prime}$, thus in particular a basis of $\operatorname*{Ker}\left(
\mathbf{t}^{\prime}\right)  $, but this latter basis is what one would obtain
using the symmetrization map and the Poincar\'{e}-Birkhoff-Witt theorem from
Theorem \ref{thm.Kert'}. Conversely, Pang's \cite[Theorem 5.1]{pang}
immediately yields Theorem \ref{thm.Kert'} when $\mathbf{k}$ is a $\mathbb{Q}$-algebra.)

\subsection{Acknowledgments}

Communications with Franco Saliola (who is studying the random-to-top shuffle
as an element of the group algebra of the symmetric group) have helped
rekindle my interest in this question. Parts of what comes below might be
related (or even equivalent) to some of his recent unpublished work. The
SageMath computer algebra system \cite{sage} was used to verify some of the
results below (in small degrees and for small ranks of $L$) before a general
proof was found.

\section{The map $\mathbf{t}$}

\begin{condition}
For the rest of this note, we fix a commutative ring $\mathbf{k}$. All
unadorned tensor signs (i.e., signs $\otimes$ without a subscript) in the
following are understood to mean $\otimes_{\mathbf{k}}$.

We also fix a $\mathbf{k}$-module $L$.
\end{condition}

\begin{definition}
Let $T\left(  L\right)  $ be the tensor algebra of $L$ (over $\mathbf{k}$).
Notice that $T\left(  L\right)  =L^{\otimes0}\oplus L^{\otimes1}\oplus
L^{\otimes2}\oplus\cdots$ as $\mathbf{k}$-module.

The tensor algebra $T\left(  L\right)  $ is $\mathbb{Z}$-graded (an element of
$L^{\otimes n}$ has degree $n$) and $\mathbb{Z}_{2}$-graded (here an element
of $L^{\otimes n}$ has degree $n\operatorname{mod}2$).
\end{definition}

(Here, I use $\mathbb{Z}_{2}$ to denote the quotient ring $\mathbb{Z}%
/2\mathbb{Z}$, and I use the notation $n\operatorname{mod}2$ to denote the
remainder class of $n$ modulo $2$.)

\begin{definition}
\label{def.t}Let $\mathbf{t}:T\left(  L\right)  \rightarrow T\left(  L\right)
$ be the $\mathbf{k}$-linear map which acts on pure tensors according to the
formula%
\[
\mathbf{t}\left(  u_{1}\otimes u_{2}\otimes\cdots\otimes u_{k}\right)
=\sum_{i=1}^{k}\left(  -1\right)  ^{i-1}u_{i}\otimes u_{1}\otimes u_{2}%
\otimes\cdots\otimes\widehat{u_{i}}\otimes\cdots\otimes u_{k}%
\]
(for all $k\in\mathbb{N}$ and $u_{1},u_{2},\ldots,u_{k}\in L$), where the
$\widehat{u_{i}}$ is not an actual tensorand but rather a symbol that means
that the factor $u_{i}$ is removed from the place where it would usually occur
in the tensor product. (This is clearly well-defined.) Thus, $\mathbf{t}$ is a
graded $\mathbf{k}$-module endomorphism of $T\left(  L\right)  $.
\end{definition}

\section{$\operatorname*{Ker}\mathbf{t}$ is the joint kernel of the
superderivations $\partial_{g}$}

\begin{definition}
\label{def.deltag}Let $L^{\ast}$ denote the dual $\mathbf{k}$-module
$\operatorname*{Hom}\left(  L,\mathbf{k}\right)  $ of $L$. If $g\in L^{\ast}$,
then we define a $\mathbf{k}$-linear map $\partial_{g}:T\left(  L\right)
\rightarrow T\left(  L\right)  $ by%
\[
\partial_{g}\left(  u_{1}\otimes u_{2}\otimes\cdots\otimes u_{k}\right)
=\sum_{i=1}^{k}\left(  -1\right)  ^{i-1}g\left(  u_{i}\right)  \cdot
u_{1}\otimes u_{2}\otimes\cdots\otimes\widehat{u_{i}}\otimes\cdots\otimes
u_{k}%
\]
for all $k\in\mathbb{N}$ and $u_{1},u_{2},\ldots,u_{k}\in L$. (Again, it is
easy to check that this is well-defined.)
\end{definition}

The map $\partial_{g}$ is a lift to $T\left(  L\right)  $ of what is called
the \textquotedblleft interior product by $g$\textquotedblright\ on the
Clifford algebra of $L$ endowed with any quadratic form. This observation
provides a motivation for studying $\partial_{g}$; it will not be used below.

For any $g\in L^{\ast}$, the map $\partial_{g}$ is a superderivation of the
superalgebra $T\left(  L\right)  $. Rather than explaining these notions, let
us state explicitly what the previous sentence means:

\begin{proposition}
\label{prop.1}Let $g\in L^{\ast}$.

\textbf{(a)} Then, $\partial_{g}\left(  1\right)  =0$.

\textbf{(b)} Also, if $n\in\mathbb{N}$, $a\in L^{\otimes n}$ and $b\in
T\left(  L\right)  $, then $\partial_{g}\left(  ab\right)  =\partial
_{g}\left(  a\right)  b+\left(  -1\right)  ^{n}a\partial_{g}\left(  b\right)
$.
\end{proposition}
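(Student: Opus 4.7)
My plan is to dispatch part (a) immediately from the definition and to prove part (b) by reducing to pure tensors and splitting the defining sum at the seam between $a$ and $b$.

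For part \textbf{(a)}, recall that $1 \in T(L)$ is identified with the unit of $L^{\otimes 0} = \mathbf{k}$, which in the notation of Definition \ref{def.deltag} corresponds to the ``empty pure tensor'' (the case $k=0$). The defining sum $\sum_{i=1}^{0}$ is empty, hence equals $0$. So $\partial_g(1)=0$.

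For part \textbf{(b)}, since $\partial_g$ is $\mathbf{k}$-linear and multiplication in $T(L)$ is $\mathbf{k}$-bilinear, both sides of the claimed identity are $\mathbf{k}$-linear in each of $a$ and $b$ separately. Hence I may assume $a$ and $b$ are pure tensors. Since $a \in L^{\otimes n}$, I can write $a = u_1 \otimes u_2 \otimes \cdots \otimes u_n$ and $b = u_{n+1} \otimes u_{n+2} \otimes \cdots \otimes u_{n+m}$ for some $m \in \mathbb{N}$ and $u_1,\dots,u_{n+m} \in L$. Then $ab$ is the pure tensor $u_1 \otimes u_2 \otimes \cdots \otimes u_{n+m}$, and applying Definition \ref{def.deltag} gives
\[
\partial_g(ab)=\sum_{i=1}^{n+m}(-1)^{i-1}g(u_i)\cdot u_1\otimes\cdots\otimes\widehat{u_i}\otimes\cdots\otimes u_{n+m}.
\]
I split this sum into the range $1 \le i \le n$ and the range $n+1 \le i \le n+m$. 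In the first range, the hat removes one of the first $n$ tensorands, so the corresponding term factors as (a term of $\partial_g(a)$) tensored on the right with $b$; summing over $i=1,\dots,n$ recovers $\partial_g(a)\,b$. In the second range I reindex by $j = i-n$: the sign becomes $(-1)^{n+j-1} = (-1)^n(-1)^{j-1}$, and the hat now removes $u_{n+j}$ from the $b$-part, so each term factors as $a$ tensored on the left with (a term of $\partial_g(b)$). Summing over $j=1,\dots,m$ gives $(-1)^n a\,\partial_g(b)$. Adding the two contributions yields exactly $\partial_g(a)b + (-1)^n a\,\partial_g(b)$.

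No step looks like a real obstacle; this is just a bookkeeping computation, and the only thing to be careful about is the sign shift upon reindexing, which is what forces the super-Leibniz sign $(-1)^n$. I would write the argument just once, in the pure-tensor reduction, and let bilinearity absorb the general case.
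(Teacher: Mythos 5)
Your proof is correct and follows essentially the same route as the paper: reduce to pure tensors via bilinearity, apply the definition of $\partial_g$ to the concatenated tensor, split the sum at the seam, and reindex the second range to pull out the $(-1)^n$ sign. The only difference is cosmetic (you label the $b$-tensorands as $u_{n+1},\ldots,u_{n+m}$ rather than $b_1,\ldots,b_m$).
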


\begin{proof}
[Proof of Proposition \ref{prop.1}.]We give this straightforward proof purely
for the sake of completeness.

\textbf{(a)} The unity $1$ of the ring $T\left(  L\right)  $ is the empty
tensor product. The definition of $\partial_{g}$ thus shows that $\partial
_{g}\left(  1\right)  $ is an empty sum, and therefore equal to $0$. This
proves Proposition \ref{prop.1} \textbf{(a)}.

\textbf{(b)} Let $n\in\mathbb{N}$, $a\in L^{\otimes n}$ and $b\in T\left(
L\right)  $. We need to prove the equality $\partial_{g}\left(  ab\right)
=\partial_{g}\left(  a\right)  b+\left(  -1\right)  ^{n}a\partial_{g}\left(
b\right)  $. Since this equality is $\mathbf{k}$-linear in each of $a$ and
$b$, we can WLOG assume that both $a$ and $b$ are pure tensors. Assume this.
Since $a\in L^{\otimes n}$ is a pure tensor, we have $a=a_{1}\otimes
a_{2}\otimes\cdots\otimes a_{n}$ for some $a_{1},a_{2},\ldots,a_{n}\in L$.
Consider these $a_{1},a_{2},\ldots,a_{n}$. Since $b$ is a pure tensor, we have
$b=b_{1}\otimes b_{2}\otimes\cdots\otimes b_{m}$ for some $m\in\mathbb{N}$ and
$b_{1},b_{2},\ldots,b_{m}\in L$. Consider this $m$ and these $b_{1}%
,b_{2},\ldots,b_{m}$. Multiplying the equalities $a=a_{1}\otimes a_{2}%
\otimes\cdots\otimes a_{n}$ and $b=b_{1}\otimes b_{2}\otimes\cdots\otimes
b_{m}$, we obtain%
\begin{align*}
ab  &  =\left(  a_{1}\otimes a_{2}\otimes\cdots\otimes a_{n}\right)  \left(
b_{1}\otimes b_{2}\otimes\cdots\otimes b_{m}\right) \\
&  =a_{1}\otimes a_{2}\otimes\cdots\otimes a_{n}\otimes b_{1}\otimes
b_{2}\otimes\cdots\otimes b_{m}.
\end{align*}
Hence,
\begin{align}
&  \partial_{g}\left(  ab\right) \nonumber\\
&  =\partial_{g}\left(  a_{1}\otimes a_{2}\otimes\cdots\otimes a_{n}\otimes
b_{1}\otimes b_{2}\otimes\cdots\otimes b_{m}\right) \nonumber\\
&  =\sum_{i=1}^{n}\left(  -1\right)  ^{i-1}g\left(  a_{i}\right)
\cdot\underbrace{a_{1}\otimes a_{2}\otimes\cdots\otimes\widehat{a_{i}}%
\otimes\cdots\otimes a_{n}\otimes b_{1}\otimes b_{2}\otimes\cdots\otimes
b_{m}}_{=\left(  a_{1}\otimes a_{2}\otimes\cdots\otimes\widehat{a_{i}}%
\otimes\cdots\otimes a_{n}\right)  \cdot\left(  b_{1}\otimes b_{2}%
\otimes\cdots\otimes b_{m}\right)  }\nonumber\\
&  \ \ \ \ \ \ \ \ \ \ +\sum_{i=n+1}^{n+m}\left(  -1\right)  ^{i-1}g\left(
b_{i-n}\right)  \cdot\underbrace{a_{1}\otimes a_{2}\otimes\cdots\otimes
a_{n}\otimes b_{1}\otimes b_{2}\otimes\cdots\otimes\widehat{b_{i-n}}%
\otimes\cdots\otimes b_{m}}_{=\left(  a_{1}\otimes a_{2}\otimes\cdots\otimes
a_{n}\right)  \cdot\left(  b_{1}\otimes b_{2}\otimes\cdots\otimes
\widehat{b_{i-n}}\otimes\cdots\otimes b_{m}\right)  }\nonumber\\
&  \ \ \ \ \ \ \ \ \ \ \left(  \text{by the definition of }\partial_{g}\right)
\nonumber\\
&  =\sum_{i=1}^{n}\left(  -1\right)  ^{i-1}g\left(  a_{i}\right)  \cdot\left(
a_{1}\otimes a_{2}\otimes\cdots\otimes\widehat{a_{i}}\otimes\cdots\otimes
a_{n}\right)  \cdot\underbrace{\left(  b_{1}\otimes b_{2}\otimes\cdots\otimes
b_{m}\right)  }_{=b}\nonumber\\
&  \ \ \ \ \ \ \ \ \ \ +\sum_{i=n+1}^{n+m}\left(  -1\right)  ^{i-1}g\left(
b_{i-n}\right)  \cdot\underbrace{\left(  a_{1}\otimes a_{2}\otimes
\cdots\otimes a_{n}\right)  }_{=a}\cdot\left(  b_{1}\otimes b_{2}\otimes
\cdots\otimes\widehat{b_{i-n}}\otimes\cdots\otimes b_{m}\right) \nonumber\\
&  =\sum_{i=1}^{n}\left(  -1\right)  ^{i-1}g\left(  a_{i}\right)  \cdot\left(
a_{1}\otimes a_{2}\otimes\cdots\otimes\widehat{a_{i}}\otimes\cdots\otimes
a_{n}\right)  \cdot b\nonumber\\
&  \ \ \ \ \ \ \ \ \ \ +\sum_{i=n+1}^{n+m}\left(  -1\right)  ^{i-1}g\left(
b_{i-n}\right)  \cdot a\cdot\left(  b_{1}\otimes b_{2}\otimes\cdots
\otimes\widehat{b_{i-n}}\otimes\cdots\otimes b_{m}\right) \nonumber\\
&  =\sum_{i=1}^{n}\left(  -1\right)  ^{i-1}g\left(  a_{i}\right)  \cdot\left(
a_{1}\otimes a_{2}\otimes\cdots\otimes\widehat{a_{i}}\otimes\cdots\otimes
a_{n}\right)  \cdot b\nonumber\\
&  \ \ \ \ \ \ \ \ \ \ +\sum_{i=1}^{m}\left(  -1\right)  ^{i+n-1}g\left(
b_{i}\right)  \cdot a\cdot\left(  b_{1}\otimes b_{2}\otimes\cdots
\otimes\widehat{b_{i}}\otimes\cdots\otimes b_{m}\right)  \label{pf.prop.1.ab1}%
\\
&  \ \ \ \ \ \ \ \ \ \ \left(  \text{here, we have substituted }i\text{ for
}i-n\text{ in the second sum}\right)  .\nonumber
\end{align}

\end{proof}

But $a=a_{1}\otimes a_{2}\otimes\cdots\otimes a_{n}$ shows that%
\begin{align*}
\partial_{g}\left(  a\right)   &  =\partial_{g}\left(  a_{1}\otimes
a_{2}\otimes\cdots\otimes a_{n}\right) \\
&  =\sum_{i=1}^{n}\left(  -1\right)  ^{i-1}g\left(  a_{i}\right)  \cdot
a_{1}\otimes a_{2}\otimes\cdots\otimes\widehat{a_{i}}\otimes\cdots\otimes
a_{n}%
\end{align*}
(by the definition of $\partial_{g}$). Similarly,%
\[
\partial_{g}\left(  b\right)  =\sum_{i=1}^{m}\left(  -1\right)  ^{i-1}g\left(
b_{i}\right)  \cdot b_{1}\otimes b_{2}\otimes\cdots\otimes\widehat{b_{i}%
}\otimes\cdots\otimes b_{m}.
\]
Hence,%
\begin{align*}
&  \underbrace{\partial_{g}\left(  a\right)  }_{=\sum_{i=1}^{n}\left(
-1\right)  ^{i-1}g\left(  a_{i}\right)  \cdot a_{1}\otimes a_{2}\otimes
\cdots\otimes\widehat{a_{i}}\otimes\cdots\otimes a_{n}}b+\left(  -1\right)
^{n}a\underbrace{\partial_{g}\left(  b\right)  }_{=\sum_{i=1}^{m}\left(
-1\right)  ^{i-1}g\left(  b_{i}\right)  \cdot b_{1}\otimes b_{2}\otimes
\cdots\otimes\widehat{b_{i}}\otimes\cdots\otimes b_{m}}\\
&  =\underbrace{\left(  \sum_{i=1}^{n}\left(  -1\right)  ^{i-1}g\left(
a_{i}\right)  \cdot a_{1}\otimes a_{2}\otimes\cdots\otimes\widehat{a_{i}%
}\otimes\cdots\otimes a_{n}\right)  b}_{=\sum_{i=1}^{n}\left(  -1\right)
^{i-1}g\left(  a_{i}\right)  \cdot\left(  a_{1}\otimes a_{2}\otimes
\cdots\otimes\widehat{a_{i}}\otimes\cdots\otimes a_{n}\right)  \cdot b}\\
&  \ \ \ \ \ \ \ \ \ \ +\underbrace{\left(  -1\right)  ^{n}a\sum_{i=1}%
^{m}\left(  -1\right)  ^{i-1}g\left(  b_{i}\right)  \cdot b_{1}\otimes
b_{2}\otimes\cdots\otimes\widehat{b_{i}}\otimes\cdots\otimes b_{m}}%
_{=\sum_{i=1}^{m}\left(  -1\right)  ^{i+n-1}g\left(  b_{i}\right)  \cdot
a\cdot\left(  b_{1}\otimes b_{2}\otimes\cdots\otimes\widehat{b_{i}}%
\otimes\cdots\otimes b_{m}\right)  }\\
&  =\sum_{i=1}^{n}\left(  -1\right)  ^{i-1}g\left(  a_{i}\right)  \cdot\left(
a_{1}\otimes a_{2}\otimes\cdots\otimes\widehat{a_{i}}\otimes\cdots\otimes
a_{n}\right)  \cdot b\\
&  \ \ \ \ \ \ \ \ \ \ +\sum_{i=1}^{m}\left(  -1\right)  ^{i+n-1}g\left(
b_{i}\right)  \cdot a\cdot\left(  b_{1}\otimes b_{2}\otimes\cdots
\otimes\widehat{b_{i}}\otimes\cdots\otimes b_{m}\right) \\
&  =\partial_{g}\left(  ab\right)  \ \ \ \ \ \ \ \ \ \ \left(  \text{by
(\ref{pf.prop.1.ab1})}\right)  .
\end{align*}
This proves Proposition \ref{prop.1} \textbf{(b)}.

The maps $\partial_{g}$ relate to $\operatorname*{Ker}\mathbf{t}$ as follows:

\begin{proposition}
\label{prop.2} \textbf{(a)} We have $\partial_{g}\left(  \operatorname{Ker}%
\mathbf{t}\right)  = 0$ for every $g \in L^{\ast}$.

\textbf{(b)} Assume that $L$ is a free $\mathbf{k}$-module. Then,%
\[
\operatorname*{Ker}\mathbf{t}=\left\{  U\in T\left(  L\right)  \ \mid
\ \partial_{g}\left(  U\right)  =0\text{ for every }g\in L^{\ast}\right\}  .
\]

\end{proposition}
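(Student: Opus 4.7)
The plan is to realize $\partial_g$ as the composition of $\mathbf{t}$ with a simple "pluck and evaluate" operator. Define a $\mathbf{k}$-linear map $\pi_g : T(L) \to T(L)$ that vanishes on $L^{\otimes 0}$ and that, for each $k \geq 1$, equals the composition
\[
L^{\otimes k} = L \otimes L^{\otimes (k-1)} \xrightarrow{g \otimes \id} \mathbf{k} \otimes L^{\otimes (k-1)} = L^{\otimes (k-1)} \hookrightarrow T(L).
\]
Comparing term by term on a pure tensor $u_1 \otimes u_2 \otimes \cdots \otimes u_k$, one checks that applying $\pi_g$ to each summand of $\mathbf{t}\left(u_1 \otimes \cdots \otimes u_k\right)$ simply strips off the leading $u_i$ and multiplies by $g(u_i)$, which reproduces exactly the defining formula of $\partial_g$. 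This yields the identity $\partial_g = \pi_g \circ \mathbf{t}$, from which part \textbf{(a)} follows immediately: if $\mathbf{t}(U) = 0$, then $\partial_g(U) = \pi_g(\mathbf{t}(U)) = 0$.

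For part \textbf{(b)}, the inclusion $\subseteq$ is supplied by part \textbf{(a)}. For the reverse inclusion, assume $U \in T(L)$ satisfies $\partial_g(U) = 0$ for every $g \in L^*$. Now I would use freeness: fix a basis $(e_\alpha)_{\alpha \in I}$ of $L$ and, for each $\alpha$, let $g_\alpha \in L^*$ be the coordinate functional determined by $g_\alpha(e_\beta) = \delta_{\alpha\beta}$. Freeness of $L$ provides, for each $k \geq 1$, a canonical direct sum decomposition $L^{\otimes k} \cong \bigoplus_{\alpha \in I} e_\alpha \otimes L^{\otimes (k-1)}$. Hence the degree-$k$ component of $\mathbf{t}(U)$ admits a unique expansion
\[
\mathbf{t}(U)_k = \sum_{\alpha \in I} e_\alpha \otimes w_\alpha^{(k)}, \qquad w_\alpha^{(k)} \in L^{\otimes (k-1)},
\]
with only finitely many $w_\alpha^{(k)}$ nonzero. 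Applying $\pi_{g_\alpha}$ to this expansion returns $w_\alpha^{(k)}$, while $\pi_{g_\alpha}(\mathbf{t}(U)) = \partial_{g_\alpha}(U) = 0$ by hypothesis. Therefore every $w_\alpha^{(k)}$ vanishes, so $\mathbf{t}(U)_k = 0$ for each $k \geq 1$. The degree-$0$ component of $\mathbf{t}(U)$ is zero by definition (the defining sum is empty), and we conclude $\mathbf{t}(U) = 0$.

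There is no serious obstacle: the argument reduces entirely to the factorization $\partial_g = \pi_g \circ \mathbf{t}$ together with a routine basis calculation. Freeness of $L$ is used exclusively to obtain enough coordinate functionals $g_\alpha$ to separate the distinguished first-tensorand slot; without it one could not, in general, recover the components $w_\alpha^{(k)}$ from $L^*$-values, and the converse inclusion in part \textbf{(b)} would fail.
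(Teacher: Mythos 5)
Your proof is correct and follows essentially the same route as the paper: your map $\pi_g$ is exactly the paper's auxiliary map $\mathbf{c}_g$, and the argument hinges on the same factorization $\partial_g = \pi_g \circ \mathbf{t}$ followed by using coordinate functionals associated to a basis of the free module $L$ to read off the components of $\mathbf{t}(U)$. The only cosmetic difference is that you split $\mathbf{t}(U)$ by degree before expanding along the first tensor slot, whereas the paper expands $\mathbf{t}(U) = \sum_i e_i V_i$ directly in $\overline{T(L)}$; both are valid and the substance is identical.
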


\begin{proof}
[Proof of Proposition \ref{prop.2}.]For every $g\in L^{\ast}$, we define a
$\mathbf{k}$-module homomorphism $\mathbf{c}_{g}:T\left(  L\right)
\rightarrow T\left(  L\right)  $ by the formula%
\[
\mathbf{c}_{g}\left(  u_{1}\otimes u_{2}\otimes\cdots\otimes u_{k}\right)
=\left\{
\begin{array}
[c]{c}%
0,\text{ if }k=0;\\
g\left(  u_{1}\right)  u_{2}\otimes u_{3}\otimes\cdots\otimes u_{k},\text{ if
}k>0
\end{array}
\right.
\]
for all $k\in\mathbb{N}$ and $u_{1},u_{2},\ldots,u_{k}\in L$. (Again, this is
well-defined for rather obvious reasons.)

It is now easy to prove that $\partial_{g}=\mathbf{c}_{g}\circ\mathbf{t}$ for
every $g\in L^{\ast}$\ \ \ \ \footnote{Indeed, $\partial_{g}$ and
$\mathbf{c}_{g}\circ\mathbf{t}$ are two $\mathbf{k}$-linear maps which equal
each other on each pure tensor (this can be checked readily).}. Thus, every
$g\in L^{\ast}$ satisfies $\operatorname{Ker}\mathbf{t}\subseteq
\operatorname{Ker}\left(  \partial_{g}\right)  $, so that $\partial_{g}\left(
\operatorname{Ker}\mathbf{t}\right)  =0$. This proves Proposition \ref{prop.2}
\textbf{(a)}.

\textbf{(b)} The definition of $\mathbf{c}_{g}$ easily yields%
\begin{equation}
\mathbf{c}_{g}\left(  vU\right)  =g\left(  v\right)
U\ \ \ \ \ \ \ \ \ \ \text{for any }v\in L\text{ and }U\in T\left(  L\right)
. \label{pf.prop.2.1}%
\end{equation}

We denote by $\overline{T\left(  L\right)  }$ the $\mathbf{k}$-submodule
$L^{\otimes1}\oplus L^{\otimes2}\oplus L^{\otimes3}\oplus\cdots$ of $T\left(
L\right)  $. We notice that $\mathbf{t}\left(  T\left(  L\right)  \right)
\subseteq\overline{T\left(  L\right)  }$ (since $\mathbf{t}$ is a graded map
which equals $0$ in degree $0$).

Now, let $V\in\left\{  U\in T\left(  L\right)  \ \mid\ \partial_{g}\left(
U\right)  =0\text{ for every }g\in L^{\ast}\right\}  $. We are going to show
that $V\in\operatorname*{Ker}\mathbf{t}$.

We have $V\in\left\{  U\in T\left(  L\right)  \ \mid\ \partial_{g}\left(
U\right)  =0\text{ for every }g\in L^{\ast}\right\}  $. Hence, $V\in T\left(
L\right)  $, and we have $\partial_{g}\left(  V\right)  =0$ for every $g\in
L^{\ast}$.

We fix a basis $\left(  e_{i}\right)  _{i\in I}$ of the $\mathbf{k}$-module
$L$ (this exists since $L$ is free). For every $i\in I$, let $e_{i}^{\ast}\in
L^{\ast}$ be the $\mathbf{k}$-linear map $L\rightarrow\mathbf{k}$ which sends
$e_{i}$ to $1$ and sends all other $e_{j}$ to $0$. In other words,
$e_{i}^{\ast}\in L^{\ast}$ satisfies $e_{i}^{\ast}\left(  e_{j}\right)
=\delta_{j,i}$ for all $j\in I$. (If $I$ is finite, then $\left(  e_{i}^{\ast
}\right)  _{i\in I}$ is thus the basis of $L^{\ast}$ dual to the basis
$\left(  e_{i}\right)  _{i\in I}$ of $L$. For arbitrary $I$, it might not be a
basis of $L^{\ast}$, but we don't care.)

We have $\mathbf{t}\left(  V\right)  \in\mathbf{t}\left(  T\left(  L\right)
\right)  \subseteq\overline{T\left(  L\right)  }$. Thus, we can write the
tensor $\mathbf{t}\left(  V\right)  $ in the form $\mathbf{t}\left(  V\right)
=\sum_{i\in I}e_{i}V_{i}$ for some tensors $V_{i}\in T\left(  L\right)  $ (all
but finitely many of which are zero). Consider these $V_{i}$. For every $j\in
I$, we can apply the map $\mathbf{c}_{e_{j}^{\ast}}$ to both sides of the
equality $\mathbf{t}\left(  V\right)  =\sum_{i\in I}e_{i}V_{i}$ and obtain%
\[
\mathbf{c}_{e_{j}^{\ast}}\left(  \mathbf{t}\left(  V\right)  \right)
=\mathbf{c}_{e_{j}^{\ast}}\left(  \sum_{i\in I}e_{i}V_{i}\right)  =\sum_{i\in
I}\underbrace{\mathbf{c}_{e_{j}^{\ast}}\left(  e_{i}V_{i}\right)
}_{\substack{=e_{j}^{\ast}\left(  e_{i}\right)  V_{i}\\\text{(by
(\ref{pf.prop.2.1}))}}}=\sum_{i\in I}\underbrace{e_{j}^{\ast}\left(
e_{i}\right)  }_{=\delta_{i,j}}V_{i}=\sum_{i\in I}\delta_{i,j}V_{i}=V_{j}.
\]
But every $g\in L^{\ast}$ satisfies $\partial_{g}\left(  V\right)  =0$. Since
$\underbrace{\partial_{g}}_{=\mathbf{c}_{g}\circ\mathbf{t}}\left(  V\right)
=\mathbf{c}_{g}\left(  \mathbf{t}\left(  V\right)  \right)  $, this rewrites
as follows: Every $g\in L^{\ast}$ satisfies $\mathbf{c}_{g}\left(
\mathbf{t}\left(  V\right)  \right)  =0$. Applied to $g=e_{j}^{\ast}$, this
yields $\mathbf{c}_{e_{j}^{\ast}}\left(  \mathbf{t}\left(  V\right)  \right)
=0$ for every $j\in I$. Compared with $\mathbf{c}_{e_{j}^{\ast}}\left(
\mathbf{t}\left(  V\right)  \right)  =V_{j}$, this yields $V_{j}=0$. This
holds for each $j\in I$. Thus, $V_{i}=0$ for each $i\in I$. Thus,
$\mathbf{t}\left(  V\right)  =\sum_{i\in I}e_{i}\underbrace{V_{i}}_{=0}=0$, so
that $V\in\operatorname*{Ker}\mathbf{t}$.

Now, let us forget that we fixed $V$. We thus have shown that $V\in
\operatorname*{Ker}\mathbf{t}$ for every $V\in\left\{  U\in T\left(  L\right)
\ \mid\ \partial_{g}\left(  U\right)  =0\text{ for every }g\in L^{\ast
}\right\}  $. In other words, we have shown the inclusion%
\[
\left\{  U\in T\left(  L\right)  \ \mid\ \partial_{g}\left(  U\right)
=0\text{ for every }g\in L^{\ast}\right\}  \subseteq\operatorname*{Ker}%
\mathbf{t}.
\]
The reverse inclusion also holds (its proof is a trivial application of
$\partial_{g}=\mathbf{c}_{g}\circ\mathbf{t}$). Combined, the two inclusions
yield
\[
\operatorname*{Ker}\mathbf{t}=\left\{  U\in T\left(  L\right)  \ \mid
\ \partial_{g}\left(  U\right)  =0\text{ for every }g\in L^{\ast}\right\}  .
\]
Proposition \ref{prop.2} \textbf{(b)} is thus proven.
\end{proof}

\section{$\operatorname*{Ker}\mathbf{t}$ is a subalgebra of $T\left(
L\right)  $}

We now want to describe $\operatorname*{Ker}\mathbf{t}$. Clearly,
$\operatorname*{Ker}\mathbf{t}$ is a graded $\mathbf{k}$-submodule of
$T\left(  L\right)  $ (since $\mathbf{t}$ is a graded map). We first introduce
some notations:

\begin{definition}
\label{def.scomm}We define a $\mathbf{k}$-bilinear map $\operatorname*{scomm}%
:T\left(  L\right)  \times T\left(  L\right)  \rightarrow T\left(  L\right)  $
as follows: We set%
\[
\operatorname*{scomm}\left(  U,V\right)  =UV-\left(  -1\right)  ^{nm}VU
\]
for any $n\in\mathbb{N}$, $m\in\mathbb{N}$, $U\in L^{\otimes n}$ and $V\in
L^{\otimes m}$. (This is easily seen to be well-defined.)

If $U\in T\left(  L\right)  $ and $V\in T\left(  L\right)  $, then we denote
the tensor $\operatorname*{scomm}\left(  U,V\right)  $ by $\left[  U,V\right]
_{\operatorname*{s}}$, and we call it the \textit{supercommutator} of $U$ and
$V$. Thus, $\left[  U,V\right]  _{\operatorname*{s}}$ depends $\mathbf{k}%
$-linearly on each of $U$ and $V$, and satisfies%
\[
\left[  U,V\right]  _{\operatorname*{s}}=UV-\left(  -1\right)  ^{nm}VU
\]
for any $n\in\mathbb{N}$, $m\in\mathbb{N}$, $U\in L^{\otimes n}$ and $V\in
L^{\otimes m}$.
\end{definition}

This definition should not surprise anyone familiar with superalgebras. In
fact, recall that the $\mathbf{k}$-algebra $T\left(  L\right)  $ is
$\mathbb{Z}_{2}$-graded; thus, $T\left(  L\right)  $ is a $\mathbf{k}%
$-superalgebra. Consequently, it has a supercommutator. This supercommutator
is precisely the map $\operatorname*{scomm}$ that we have just defined. We
just preferred not to use the language of superalgebras.

Clearly, $\left[  U,V\right]  _{\operatorname*{s}}=-\left(  -1\right)
^{nm}\left[  V,U\right]  _{\operatorname*{s}}$ for any $n\in\mathbb{N}$,
$m\in\mathbb{N}$, $U\in L^{\otimes n}$ and $V\in L^{\otimes m}$. The
supercommutator $\operatorname*{scomm}$ (or, differently written, $\left[
\cdot,\cdot\right]  _{\operatorname*{s}}$) furthermore satisfies the following
analogue of the Leibniz and Jacobi identities:

\begin{proposition}
\label{prop.jacobi}Let $n\in\mathbb{N}$, $m\in\mathbb{N}$, $U\in L^{\otimes
n}$, $V\in L^{\otimes m}$ and $W\in T\left(  L\right)  $. Then:

\textbf{(a)} We have $\left[  U,VW\right]  _{\operatorname*{s}}=\left[
U,V\right]  _{\operatorname*{s}}W+\left(  -1\right)  ^{nm}V\left[  U,W\right]
_{\operatorname*{s}}$.

\textbf{(b)} We have $\left[  U,\left[  V,W\right]  _{\operatorname*{s}%
}\right]  _{\operatorname*{s}}=\left[  \left[  U,V\right]  _{\operatorname*{s}%
},W\right]  _{\operatorname*{s}}+\left(  -1\right)  ^{nm}\left[  V,\left[
U,W\right]  _{\operatorname*{s}}\right]  _{\operatorname*{s}}$.
\end{proposition}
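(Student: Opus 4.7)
The plan is to deduce both parts by direct computation from Definition \ref{def.scomm} after reducing to the case where $W$ is a pure tensor of fixed degree. By $\mathbf{k}$-linearity of both sides of each identity in $W$, I may assume $W \in L^{\otimes p}$ for some $p \in \mathbb{N}$; then $U$, $V$, $W$ are all homogeneous of known degrees, each bracket that appears is again homogeneous (for instance $[U,V]_{\operatorname*{s}} \in L^{\otimes(n+m)}$ and $[U,W]_{\operatorname*{s}} \in L^{\otimes(n+p)}$), and every sign exponent appearing in an application of $[-,-]_{\operatorname*{s}}$ is therefore determined.

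For part \textbf{(a)}, the left-hand side is $U(VW) - (-1)^{n(m+p)}(VW)U = UVW - (-1)^{n(m+p)} VWU$, while the right-hand side expands to
\[
\left( UV - (-1)^{nm} VU \right) W + (-1)^{nm} V \left( UW - (-1)^{np} WU \right).
\]
The two copies of $\pm(-1)^{nm} VUW$ cancel, and what remains is $UVW - (-1)^{nm+np} VWU$. Since $nm + np \equiv n(m+p) \pmod 2$, the two sides agree.

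For part \textbf{(b)}, I expand $[V,W]_{\operatorname*{s}} = VW - (-1)^{mp} WV$ inside the outer bracket and apply \textbf{(a)} to each of $[U, VW]_{\operatorname*{s}}$ and $[U, WV]_{\operatorname*{s}}$. This rewrites the left-hand side of \textbf{(b)} as
\[
[U,V]_{\operatorname*{s}} W + (-1)^{nm} V [U,W]_{\operatorname*{s}} - (-1)^{mp} [U,W]_{\operatorname*{s}} V - (-1)^{mp+np} W [U,V]_{\operatorname*{s}}.
\]
On the right-hand side of \textbf{(b)}, unfolding the two outer brackets using Definition \ref{def.scomm} together with the homogeneity of $[U,V]_{\operatorname*{s}}$ and $[U,W]_{\operatorname*{s}}$ gives
\[
[U,V]_{\operatorname*{s}} W - (-1)^{(n+m)p} W [U,V]_{\operatorname*{s}} + (-1)^{nm} V [U,W]_{\operatorname*{s}} - (-1)^{nm+m(n+p)} [U,W]_{\operatorname*{s}} V,
\]
which coincides term by term with the previous expression once one uses $(-1)^{2mn} = 1$ and $(n+m)p = np + mp$.

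The only real obstacle is keeping the sign exponents straight; no deeper idea is required. Conceptually, part \textbf{(a)} says that $[U,-]_{\operatorname*{s}}$ is a super-derivation of $T(L)$ of degree $n$, and part \textbf{(b)} is the formal consequence that such a super-derivation interacts with the supercommutator as in any Lie superalgebra, which is why the derivation of \textbf{(b)} from \textbf{(a)} above works without additional input.
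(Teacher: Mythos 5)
The paper leaves this proposition to the reader as ``straightforward,'' and your explicit computation is exactly the kind of verification that is meant: reduce to homogeneous $W$ by bilinearity, unfold the supercommutators using Definition \ref{def.scomm} (tracking that $[U,V]_{\operatorname*{s}}\in L^{\otimes(n+m)}$ etc.), and match terms. Your sign bookkeeping in both (a) and (b) checks out, and your derivation of (b) from (a) via expanding $[V,W]_{\operatorname*{s}}$ is the standard route; this is the same approach the paper has in mind.
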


\begin{proof}
[Proof of Proposition \ref{prop.jacobi}.]This is straightforward and left to
the reader.
\end{proof}

\begin{definition}
We will apply the same notations to supercommutators that are usually applied
to commutators. For instance, when $P$ and $Q$ are two $\mathbf{k}$-submodules
of $T\left(  L\right)  $, we will use the notation $\left[  P,Q\right]
_{\operatorname*{s}}$ for the $\mathbf{k}$-linear span of the supercommutators
$\left[  U,V\right]  _{\operatorname*{s}}$ for $U\in P$ and $V\in Q$ (just as
one commonly writes $\left[  P,Q\right]  $ for the $\mathbf{k}$-linear span of
the commutators $\left[  U,V\right]  $ for $U\in P$ and $V\in Q$).
\end{definition}

\begin{proposition}
\label{prop.3}\textbf{(a)} We have $L^{\otimes0}\subseteq\operatorname*{Ker}%
\mathbf{t}$.

\textbf{(b)} We have $\operatorname*{Ker}\mathbf{t}\cdot\operatorname*{Ker}%
\mathbf{t}\subseteq\operatorname*{Ker}\mathbf{t}$.

\textbf{(c)} We have $\left[  L,L\right]  _{\operatorname*{s}}\subseteq
\operatorname*{Ker}\mathbf{t}$.

\textbf{(d)} We have $\left[  L,\operatorname*{Ker}\mathbf{t}\right]
_{\operatorname*{s}}\subseteq\operatorname*{Ker}\mathbf{t}$.

\textbf{(e)} We have $xx\in\operatorname*{Ker}\mathbf{t}$ for each $x\in L$.
\end{proposition}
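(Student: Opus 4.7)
Parts \textbf{(a)}, \textbf{(c)}, \textbf{(e)} are immediate from the definition. For \textbf{(a)}, the sum in $\mathbf{t}(1)$ is empty, so $\mathbf{t}(1) = 0$. For \textbf{(e)}, I have $\mathbf{t}(xx) = x \otimes x - x \otimes x = 0$. For \textbf{(c)}, given $x,y \in L$, the definition of the supercommutator gives $[x,y]_{\operatorname*{s}} = xy + yx$, and the two terms of $\mathbf{t}(xy+yx) = (x\otimes y - y\otimes x) + (y\otimes x - x\otimes y)$ cancel.

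For \textbf{(b)} and \textbf{(d)}, I would first extract a ``twisted product rule'' for $\mathbf{t}$: for any pure tensor $a \in L^{\otimes n}$ and any $b \in T(L)$,
\begin{equation}
\mathbf{t}(ab) \;=\; \mathbf{t}(a)\cdot b \;+\; (-1)^{n}\, \iota_{a}(\mathbf{t}(b)),
\end{equation}
where $\iota_{a} : T(L) \to T(L)$ is the $\mathbf{k}$-linear map that inserts the tensor $a$ right after the first factor, i.e.\ $\iota_{a}(w_{1} \otimes w_{2} \otimes \cdots \otimes w_{k}) = w_{1} \otimes a \otimes w_{2} \otimes \cdots \otimes w_{k}$ for $k \geq 1$ (and $\iota_{a}(1) = 0$, which is harmless since $\mathbf{t}$ vanishes on $L^{\otimes 0}$). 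To prove this, I would write $a = a_{1} \otimes \cdots \otimes a_{n}$ and $b = b_{1} \otimes \cdots \otimes b_{m}$ and split the $(n+m)$-fold sum defining $\mathbf{t}(ab)$ at position $n$: the first $n$ summands obviously assemble into $\mathbf{t}(a) \cdot b$, while the remaining $m$ summands read $\sum_{j=1}^{m} (-1)^{n+j-1}\, b_{j} \otimes a \otimes (b_{1} \otimes \cdots \otimes \widehat{b_{j}} \otimes \cdots \otimes b_{m})$, which is exactly $(-1)^{n}\, \iota_{a}(\mathbf{t}(b))$. The identity then extends to all homogeneous $a \in L^{\otimes n}$ by $\mathbf{k}$-bilinearity.

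Granted the identity, \textbf{(b)} and \textbf{(d)} are quick. For \textbf{(b)}, since $\mathbf{t}$ is graded, the subspace $\operatorname*{Ker}\mathbf{t}$ is itself graded, so I may assume $a \in \operatorname*{Ker}\mathbf{t}$ is homogeneous of degree $n$; then both $\mathbf{t}(a)\cdot b$ and $\iota_{a}(\mathbf{t}(b))$ vanish, whence $\mathbf{t}(ab)=0$. For \textbf{(d)}, let $x \in L$ and let $U \in \operatorname*{Ker}\mathbf{t}$ be homogeneous of degree $k$. Applying the identity twice yields
\[
\mathbf{t}(xU) = \mathbf{t}(x)\cdot U - \iota_{x}(\mathbf{t}(U)) = xU, \qquad \mathbf{t}(Ux) = \mathbf{t}(U)\cdot x + (-1)^{k}\iota_{U}(x) = (-1)^{k}\, xU,
\]
where $\iota_{U}(x) = x \otimes U$ since $x$ has only one tensorand. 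Therefore $\mathbf{t}([x,U]_{\operatorname*{s}}) = \mathbf{t}(xU) - (-1)^{k}\mathbf{t}(Ux) = xU - xU = 0$. The only nontrivial step is the careful index bookkeeping needed to establish the twisted product rule; after that, everything is routine.
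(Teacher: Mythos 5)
Your proof is correct and follows essentially the same route as the paper: your auxiliary map $\iota_a$ is exactly the paper's map $\mathbf{i}_U$, and your ``twisted product rule'' $\mathbf{t}(ab)=\mathbf{t}(a)b+(-1)^n\iota_a(\mathbf{t}(b))$ is precisely the paper's key identity for $\mathbf{t}(UV)$, from which parts \textbf{(b)} and \textbf{(d)} are derived by the same two applications. Parts \textbf{(a)}, \textbf{(c)}, \textbf{(e)} are the same direct computations.
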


\begin{proof}
[Proof of Proposition \ref{prop.3}.]\textbf{(a)} This is obvious.

\textbf{(b)} For every $U\in T\left(  L\right)  $, we define a $\mathbf{k}%
$-linear map $\mathbf{i}_{U}:T\left(  L\right)  \rightarrow T\left(  L\right)
$ by the formula%
\[
\mathbf{i}_{U}\left(  v_{1}\otimes v_{2}\otimes\cdots\otimes v_{k}\right)
=\left\{
\begin{array}
[c]{c}%
0,\text{ if }k=0;\\
v_{1}\cdot U\cdot\left(  v_{2}\otimes v_{3}\otimes\cdots\otimes v_{k}\right)
,\text{ if }k>0
\end{array}
\right.
\]
for all $k\in\mathbb{N}$ and $v_{1},v_{2},\ldots,v_{k}\in L$. Then, it is
straightforward to see that%
\begin{equation}
\mathbf{t}\left(  UV\right)  =\mathbf{t}\left(  U\right)  V+\left(  -1\right)
^{n}\mathbf{i}_{U}\left(  \mathbf{t}\left(  V\right)  \right)
\label{pf.prop.3.b.tUV}%
\end{equation}
for all $n\in\mathbb{N}$, $U\in L^{\otimes n}$ and $V\in T\left(  L\right)
$\ \ \ \ \footnote{\textit{Proof of (\ref{pf.prop.3.b.tUV}):} Let
$n\in\mathbb{N}$, $U\in L^{\otimes n}$ and $V\in T\left(  L\right)  $. We need
to prove the equality (\ref{pf.prop.3.b.tUV}). Since this equality is
$\mathbf{k}$-linear in each of $U$ and $V$ (because $\mathbf{i}_{U}$ is
$\mathbf{k}$-linear in $U$), we can WLOG assume that both $U$ and $V$ are pure
tensors. Assume this. Since $U\in L^{\otimes n}$ is a pure tensor, we have
$U=u_{1}\otimes u_{2}\otimes\cdots\otimes u_{n}$ for some $u_{1},u_{2}%
,\ldots,u_{n}\in L$. Consider these $u_{1},u_{2},\ldots,u_{n}$. Since $V$ is a
pure tensor, we have $V=v_{1}\otimes v_{2}\otimes\cdots\otimes v_{m}$ for some
$m\in\mathbb{N}$ and $v_{1},v_{2},\ldots,v_{m}\in L$. Consider this $m$ and
these $v_{1},v_{2},\ldots,v_{m}$. Multiplying the equalities $U=u_{1}\otimes
u_{2}\otimes\cdots\otimes u_{n}$ and $V=v_{1}\otimes v_{2}\otimes\cdots\otimes
v_{m}$, we obtain%
\begin{align*}
UV  &  =\left(  u_{1}\otimes u_{2}\otimes\cdots\otimes u_{n}\right)  \left(
v_{1}\otimes v_{2}\otimes\cdots\otimes v_{m}\right) \\
&  =u_{1}\otimes u_{2}\otimes\cdots\otimes u_{n}\otimes v_{1}\otimes
v_{2}\otimes\cdots\otimes v_{m}.
\end{align*}
Hence,
\begin{align}
\mathbf{t}\left(  UV\right)   &  =\mathbf{t}\left(  u_{1}\otimes u_{2}%
\otimes\cdots\otimes u_{n}\otimes v_{1}\otimes v_{2}\otimes\cdots\otimes
v_{m}\right) \nonumber\\
&  =\sum_{i=1}^{n}\left(  -1\right)  ^{i-1}\underbrace{u_{i}\otimes
u_{1}\otimes u_{2}\otimes\cdots\otimes\widehat{u_{i}}\otimes\cdots\otimes
u_{n}\otimes v_{1}\otimes v_{2}\otimes\cdots\otimes v_{m}}_{=\left(
u_{i}\otimes u_{1}\otimes u_{2}\otimes\cdots\otimes\widehat{u_{i}}%
\otimes\cdots\otimes u_{n}\right)  \cdot\left(  v_{1}\otimes v_{2}%
\otimes\cdots\otimes v_{m}\right)  }\nonumber\\
&  \ \ \ \ \ \ \ \ \ \ +\sum_{i=n+1}^{n+m}\left(  -1\right)  ^{i-1}%
\underbrace{v_{i-n}\otimes u_{1}\otimes u_{2}\otimes\cdots\otimes u_{n}\otimes
v_{1}\otimes v_{2}\otimes\cdots\otimes\widehat{v_{i-n}}\otimes\cdots\otimes
v_{m}}_{=v_{i-n}\cdot\left(  u_{1}\otimes u_{2}\otimes\cdots\otimes
u_{n}\right)  \cdot\left(  v_{1}\otimes v_{2}\otimes\cdots\otimes
\widehat{v_{i-n}}\otimes\cdots\otimes v_{m}\right)  }\nonumber\\
&  \ \ \ \ \ \ \ \ \ \ \left(  \text{by the definition of }\mathbf{t}\right)
\nonumber\\
&  =\sum_{i=1}^{n}\left(  -1\right)  ^{i-1}\left(  u_{i}\otimes u_{1}\otimes
u_{2}\otimes\cdots\otimes\widehat{u_{i}}\otimes\cdots\otimes u_{n}\right)
\cdot\underbrace{\left(  v_{1}\otimes v_{2}\otimes\cdots\otimes v_{m}\right)
}_{=V}\nonumber\\
&  \ \ \ \ \ \ \ \ \ \ +\sum_{i=n+1}^{n+m}\left(  -1\right)  ^{i-1}%
v_{i-n}\cdot\underbrace{\left(  u_{1}\otimes u_{2}\otimes\cdots\otimes
u_{n}\right)  }_{=U}\cdot\left(  v_{1}\otimes v_{2}\otimes\cdots
\otimes\widehat{v_{i-n}}\otimes\cdots\otimes v_{m}\right) \nonumber\\
&  =\sum_{i=1}^{n}\left(  -1\right)  ^{i-1}\left(  u_{i}\otimes u_{1}\otimes
u_{2}\otimes\cdots\otimes\widehat{u_{i}}\otimes\cdots\otimes u_{n}\right)
\cdot V\nonumber\\
&  \ \ \ \ \ \ \ \ \ \ +\sum_{i=n+1}^{n+m}\left(  -1\right)  ^{i-1}%
v_{i-n}\cdot U\cdot\left(  v_{1}\otimes v_{2}\otimes\cdots\otimes
\widehat{v_{i-n}}\otimes\cdots\otimes v_{m}\right) \nonumber\\
&  =\sum_{i=1}^{n}\left(  -1\right)  ^{i-1}\left(  u_{i}\otimes u_{1}\otimes
u_{2}\otimes\cdots\otimes\widehat{u_{i}}\otimes\cdots\otimes u_{n}\right)
\cdot V\nonumber\\
&  \ \ \ \ \ \ \ \ \ \ +\sum_{i=1}^{m}\left(  -1\right)  ^{i+n-1}v_{i}\cdot
U\cdot\left(  v_{1}\otimes v_{2}\otimes\cdots\otimes\widehat{v_{i}}%
\otimes\cdots\otimes v_{m}\right) \label{pf.prop.3.b.tUV.pf.1}\\
&  \ \ \ \ \ \ \ \ \ \ \left(  \text{here, we have substituted }i\text{ for
}i-n\text{ in the second sum}\right)  .\nonumber
\end{align}
\par
But $U=u_{1}\otimes u_{2}\otimes\cdots\otimes u_{n}$ shows that%
\begin{align*}
\mathbf{t}\left(  U\right)   &  =\mathbf{t}\left(  u_{1}\otimes u_{2}%
\otimes\cdots\otimes u_{n}\right) \\
&  =\sum_{i=1}^{n}\left(  -1\right)  ^{i-1}u_{i}\otimes u_{1}\otimes
u_{2}\otimes\cdots\otimes\widehat{u_{i}}\otimes\cdots\otimes u_{n}%
\end{align*}
(by the definition of $\mathbf{t}$). Similarly,%
\[
\mathbf{t}\left(  V\right)  =\sum_{i=1}^{m}\left(  -1\right)  ^{i-1}%
v_{i}\otimes v_{1}\otimes v_{2}\otimes\cdots\otimes\widehat{v_{i}}%
\otimes\cdots\otimes v_{m}.
\]
Applying the map $\mathbf{i}_{U}$ to both sides of this equality, we obtain%
\begin{align*}
\mathbf{i}_{U}\left(  \mathbf{t}\left(  V\right)  \right)   &  =\mathbf{i}%
_{U}\left(  \sum_{i=1}^{m}\left(  -1\right)  ^{i-1}v_{i}\otimes v_{1}\otimes
v_{2}\otimes\cdots\otimes\widehat{v_{i}}\otimes\cdots\otimes v_{m}\right) \\
&  =\sum_{i=1}^{m}\left(  -1\right)  ^{i-1}\underbrace{\mathbf{i}_{U}\left(
v_{i}\otimes v_{1}\otimes v_{2}\otimes\cdots\otimes\widehat{v_{i}}%
\otimes\cdots\otimes v_{m}\right)  }_{\substack{=v_{i}\cdot U\cdot\left(
v_{1}\otimes v_{2}\otimes\cdots\otimes\widehat{v_{i}}\otimes\cdots\otimes
v_{m}\right)  \\\text{(by the definition of }\mathbf{i}_{U}\text{, since
}m>0\text{ (because }i\in\left\{  1,2,\ldots,m\right\}  \text{))}}}\\
&  =\sum_{i=1}^{m}\left(  -1\right)  ^{i-1}v_{i}\cdot U\cdot\left(
v_{1}\otimes v_{2}\otimes\cdots\otimes\widehat{v_{i}}\otimes\cdots\otimes
v_{m}\right)  .
\end{align*}
Hence,%
\begin{align*}
&  \underbrace{\mathbf{t}\left(  U\right)  }_{=\sum_{i=1}^{n}\left(
-1\right)  ^{i-1}u_{i}\otimes u_{1}\otimes u_{2}\otimes\cdots\otimes
\widehat{u_{i}}\otimes\cdots\otimes u_{n}}V+\left(  -1\right)  ^{n}%
\underbrace{\mathbf{i}_{U}\left(  \mathbf{t}\left(  V\right)  \right)
}_{=\sum_{i=1}^{m}\left(  -1\right)  ^{i-1}v_{i}\cdot U\cdot\left(
v_{1}\otimes v_{2}\otimes\cdots\otimes\widehat{v_{i}}\otimes\cdots\otimes
v_{m}\right)  }\\
&  =\underbrace{\left(  \sum_{i=1}^{n}\left(  -1\right)  ^{i-1}u_{i}\otimes
u_{1}\otimes u_{2}\otimes\cdots\otimes\widehat{u_{i}}\otimes\cdots\otimes
u_{n}\right)  V}_{=\sum_{i=1}^{n}\left(  -1\right)  ^{i-1}\left(  u_{i}\otimes
u_{1}\otimes u_{2}\otimes\cdots\otimes\widehat{u_{i}}\otimes\cdots\otimes
u_{n}\right)  \cdot V}\\
&  \ \ \ \ \ \ \ \ \ \ +\underbrace{\left(  -1\right)  ^{n}\sum_{i=1}%
^{m}\left(  -1\right)  ^{i-1}v_{i}\cdot U\cdot\left(  v_{1}\otimes
v_{2}\otimes\cdots\otimes\widehat{v_{i}}\otimes\cdots\otimes v_{m}\right)
}_{=\sum_{i=1}^{m}\left(  -1\right)  ^{i+n-1}v_{i}\cdot U\cdot\left(
v_{1}\otimes v_{2}\otimes\cdots\otimes\widehat{v_{i}}\otimes\cdots\otimes
v_{m}\right)  }\\
&  =\sum_{i=1}^{n}\left(  -1\right)  ^{i-1}\left(  u_{i}\otimes u_{1}\otimes
u_{2}\otimes\cdots\otimes\widehat{u_{i}}\otimes\cdots\otimes u_{n}\right)
\cdot V\\
&  \ \ \ \ \ \ \ \ \ \ +\sum_{i=1}^{m}\left(  -1\right)  ^{i+n-1}v_{i}\cdot
U\cdot\left(  v_{1}\otimes v_{2}\otimes\cdots\otimes\widehat{v_{i}}%
\otimes\cdots\otimes v_{m}\right) \\
&  =\mathbf{t}\left(  UV\right)  \ \ \ \ \ \ \ \ \ \ \left(  \text{by
(\ref{pf.prop.3.b.tUV.pf.1})}\right)  .
\end{align*}
This proves (\ref{pf.prop.3.b.tUV}).}.

Now, we need to prove $\operatorname*{Ker}\mathbf{t}\cdot\operatorname*{Ker}%
\mathbf{t}\subseteq\operatorname*{Ker}\mathbf{t}$. In other words, we need to
prove that $UV\in\operatorname*{Ker}\mathbf{t}$ for all $U\in
\operatorname*{Ker}\mathbf{t}$ and $V\in\operatorname*{Ker}\mathbf{t}$. So let
us fix $U\in\operatorname*{Ker}\mathbf{t}$ and $V\in\operatorname*{Ker}%
\mathbf{t}$. Since $\operatorname*{Ker}\mathbf{t}$ is a graded $\mathbf{k}%
$-submodule of $T\left(  L\right)  $ (because $\mathbf{t}$ is a graded map),
we can WLOG assume that $U$ is homogeneous, i.e., that $U\in L^{\otimes n}$
for some $n\in\mathbb{N}$. Assume this, and consider this $n$. From
(\ref{pf.prop.3.b.tUV}), we thus obtain $\mathbf{t}\left(  UV\right)
=\underbrace{\mathbf{t}\left(  U\right)  }_{\substack{=0\\\text{(since }%
U\in\operatorname*{Ker}\mathbf{t}\text{)}}}V+\left(  -1\right)  ^{n}%
\mathbf{i}_{U}\left(  \underbrace{\mathbf{t}\left(  V\right)  }%
_{\substack{=0\\\text{(since }V\in\operatorname*{Ker}\mathbf{t}\text{)}%
}}\right)  =0$, so that $UV\in\operatorname*{Ker}\mathbf{t}$, just as we
wished to prove. Proposition \ref{prop.3} \textbf{(b)} is thus proven.

\textbf{(c)} This is straightforward: For all $x,y\in L$, we have $\left[
x,y\right]  _{\operatorname*{s}}=xy-\left(  -1\right)  ^{1\cdot1}yx=xy+yx$ and
$\mathbf{t}\left(  xy\right)  =xy-yx$ and $\mathbf{t}\left(  yx\right)
=yx-xy$. Thus, for all $x,y\in L$, we have%
\[
\mathbf{t}\left(  \underbrace{\left[  x,y\right]  _{\operatorname*{s}}%
}_{=xy+yx}\right)  =\underbrace{\mathbf{t}\left(  xy\right)  }_{=xy-yx}%
+\underbrace{\mathbf{t}\left(  yx\right)  }_{=yx-xy}=\left(  xy-yx\right)
+\left(  yx-xy\right)  =0,
\]
and thus $\left[  x,y\right]  _{\operatorname*{s}}\in\operatorname*{Ker}%
\mathbf{t}$. In other words, $\left[  L,L\right]  _{\operatorname*{s}%
}\subseteq\operatorname*{Ker}\mathbf{t}$.

\textbf{(d)} It is enough to show that $\left[  u,V\right]
_{\operatorname*{s}}\in\operatorname*{Ker}\mathbf{t}$ for every $u\in L$ and
$V\in\operatorname*{Ker}\mathbf{t}$. So let $u\in L$ and $V\in
\operatorname*{Ker}\mathbf{t}$. Then, $\mathbf{t}\left(  V\right)  =0$.

Since $\operatorname*{Ker}\mathbf{t}$ is a graded $\mathbf{k}$-submodule of
$T\left(  L\right)  $, we WLOG assume that $V$ is homogeneous. That is, $V\in
L^{\otimes m}$ for some $m\in\mathbb{N}$. Consider this $m$. Applying
(\ref{pf.prop.3.b.tUV}) to $n=1$ and $U=u$, we obtain%
\[
\mathbf{t}\left(  uV\right)  =\underbrace{\mathbf{t}\left(  u\right)  }%
_{=u}V+\left(  -1\right)  ^{1}\mathbf{i}_{u}\left(  \underbrace{\mathbf{t}%
\left(  V\right)  }_{=0}\right)  =uV.
\]
On the other hand, we can apply (\ref{pf.prop.3.b.tUV}) to $m$, $V$ and $u$
instead of $n$, $U$ and $V$. As a result, we obtain%
\[
\mathbf{t}\left(  Vu\right)  =\underbrace{\mathbf{t}\left(  V\right)  }%
_{=0}u+\left(  -1\right)  ^{m}\mathbf{i}_{V}\left(  \underbrace{\mathbf{t}%
\left(  u\right)  }_{=u}\right)  =\left(  -1\right)  ^{m}%
\underbrace{\mathbf{i}_{V}\left(  u\right)  }_{\substack{=uV\\\text{(by the
definition of }\mathbf{i}_{V}\text{)}}}=\left(  -1\right)  ^{m}uV.
\]
Now,
\[
\mathbf{t}\left(  \underbrace{\left[  u,V\right]  _{\operatorname*{s}}%
}_{=uV-\left(  -1\right)  ^{1\cdot m}Vu}\right)  =\underbrace{\mathbf{t}%
\left(  uV\right)  }_{=uV}-\left(  -1\right)  ^{1\cdot m}%
\underbrace{\mathbf{t}\left(  Vu\right)  }_{=\left(  -1\right)  ^{m}%
uV}=uV-\underbrace{\left(  -1\right)  ^{1\cdot m}\left(  -1\right)  ^{m}}%
_{=1}uV=0,
\]
so that $\left[  u,V\right]  _{\operatorname*{s}}\in\operatorname*{Ker}%
\mathbf{t}$. This completes our proof of Proposition \ref{prop.3} \textbf{(d)}.

\textbf{(e)} This is also straightforward.
\end{proof}

\section{The submodules $\overline{\mathfrak{g}}$, $P$ and $\mathfrak{h}$}

Parts \textbf{(a)} and \textbf{(b)} of Proposition \ref{prop.3} show that
$\operatorname*{Ker}\mathbf{t}$ is a $\mathbf{k}$-subalgebra of $T\left(
L\right)  $. Parts \textbf{(c)} and \textbf{(d)} show that nontrivial iterated
supercommutators of elements of $L$ (that is, tensors in $\left[  L,L\right]
_{\operatorname*{s}}$ or $\left[  L,\left[  L,L\right]  _{\operatorname*{s}%
}\right]  _{\operatorname*{s}}$, etc.) belong to $\operatorname*{Ker}%
\mathbf{t}$, and (by parts \textbf{(a)} and \textbf{(b)}) so do their
products. Part \textbf{(e)} shows that elements of the form $xx$ with $x\in L$
are in $\operatorname*{Ker}\mathbf{t}$ as well. Of course, $\mathbf{k}$-linear
combinations of elements of $\operatorname*{Ker}\mathbf{t}$ are also elements
of $\operatorname*{Ker}\mathbf{t}$. Our goal is to show that all elements of
$\operatorname*{Ker}\mathbf{t}$ are obtained in these ways. We shall, however,
first formalize and somewhat improve this goal.

\begin{definition}
We recursively define a sequence $\left(  L_{1},L_{2},L_{3},\ldots\right)  $
of $\mathbf{k}$-submodules of $T\left(  L\right)  $ as follows: We set
$L_{1}=L$, and $L_{i+1}=\left[  L,L_{i}\right]  _{\operatorname*{s}}$ for
every positive integer $i$.
\end{definition}

For instance, $L_{2}=\left[  L,L\right]  _{\operatorname*{s}}$ and
$L_{3}=\left[  L,L_{2}\right]  _{\operatorname*{s}}=\left[  L,\left[
L,L\right]  _{\operatorname*{s}}\right]  _{\operatorname*{s}}$.

If you are familiar with Lie superalgebras, you will recognize $L_{1}%
+L_{2}+L_{3}+\cdots$ as the Lie subsuperalgebra of $T\left(  L\right)  $
generated by $L$. I suspect that it is the free Lie superalgebra over $L$
(though I am not sure if this is unconditionally true).

By induction, it is clear that $L_{i}\subseteq L^{\otimes i}$ for every
positive integer $i$.

\begin{definition}
Let $\overline{\mathfrak{g}}$ denote the $\mathbf{k}$-submodule $L_{2}%
+L_{3}+L_{4}+\cdots$ of $T\left(  L\right)  $.
\end{definition}

It is easy to see (but unnecessary for us) that $\overline{\mathfrak{g}}$ is a
Lie superalgebra under the supercommutator $\left[  \cdot,\cdot\right]
_{\operatorname*{s}}$. (See Proposition \ref{prop.6} below.)

\begin{definition}
\label{def.P}Let $P$ denote the $\mathbf{k}$-submodule of $L^{\otimes2}$
spanned by elements of the form $x\otimes x$ with $x\in L$. Notice that
$x\otimes x=xx$ in the $\mathbf{k}$-algebra $T\left(  L\right)  $ for every
$x\in L$.
\end{definition}

\begin{proposition}
\label{prop.4}If $2$ is invertible in $\mathbf{k}$, then $P\subseteq
L_{2}\subseteq\overline{\mathfrak{g}}$.
\end{proposition}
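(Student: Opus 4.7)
The plan is to handle the two inclusions separately, the second being essentially a tautology and the first being a one-line computation enabled by the invertibility of $2$.

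For the inclusion $L_2 \subseteq \overline{\mathfrak{g}}$, I would simply point to the definition $\overline{\mathfrak{g}} = L_2 + L_3 + L_4 + \cdots$, which immediately yields $L_2 \subseteq \overline{\mathfrak{g}}$ as a $\mathbf{k}$-submodule. No computation is needed.

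For the inclusion $P \subseteq L_2$, since $P$ is spanned by elements of the form $x \otimes x = xx$ for $x \in L$, it suffices to show each such $xx$ lies in $L_2 = [L, L]_{\operatorname{s}}$. The key computation is that for any $x \in L$, viewing $x$ as an element of $L^{\otimes 1}$, the supercommutator $[x, x]_{\operatorname{s}}$ is, by Definition~\ref{def.scomm} with $n = m = 1$, equal to $xx - (-1)^{1 \cdot 1} xx = xx + xx = 2 \cdot xx$. Because $2$ is invertible in $\mathbf{k}$, I can then write $xx = \tfrac{1}{2}\, [x, x]_{\operatorname{s}} \in [L, L]_{\operatorname{s}} = L_2$. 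Since $P$ is the $\mathbf{k}$-linear span of such elements and $L_2$ is a $\mathbf{k}$-submodule, this yields $P \subseteq L_2$.

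There is no real obstacle here; the only subtlety worth flagging is that the inclusion $P \subseteq L_2$ genuinely uses the hypothesis that $2$ is invertible (without it, the identity $[x,x]_{\operatorname{s}} = 2xx$ only gives $2 P \subseteq L_2$, not $P \subseteq L_2$), which is precisely why $P$ must be retained as a separate generating piece in the general description of $\operatorname{Ker} \mathbf{t}$.
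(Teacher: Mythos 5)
Your proof is correct and follows essentially the same route as the paper's: compute $[x,x]_{\operatorname{s}} = 2xx$, divide by $2$, and note $L_2 \subseteq \overline{\mathfrak{g}}$ is immediate from the definition. The added remark about why invertibility of $2$ is genuinely needed is a nice touch but does not change the argument.
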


\begin{proof}
[Proof of Proposition \ref{prop.4}.]Assume that $2$ is invertible in
$\mathbf{k}$. Let $x\in L$. Then, $xx=x\otimes x$ in $T\left(  L\right)  $,
and since $x$ has degree $1$, we have $\left[  x,x\right]  _{\operatorname*{s}%
}=xx-\left(  -1\right)  ^{1\cdot1}xx=xx+xx=2xx=2x\otimes x$. Hence, $x\otimes
x=\dfrac{1}{2}\left[  \underbrace{x}_{\in L},\underbrace{x}_{\in L}\right]
_{\operatorname*{s}}\in\dfrac{1}{2}\underbrace{\left[  L,L\right]
_{\operatorname*{s}}}_{=L_{2}}\subseteq L_{2}$. Since we have proven this for
each $x\in L$, we thus obtain $P\subseteq L_{2}$ (since $P$ is spanned by the
$x\otimes x$ with $x\in L$). Combined with $L_{2}\subseteq\overline
{\mathfrak{g}}$, this proves Proposition \ref{prop.4}.
\end{proof}

\begin{definition}
Let $\mathfrak{h}=\overline{\mathfrak{g}}+P$.
\end{definition}

Proposition \ref{prop.4} shows that $\mathfrak{h}=\overline{\mathfrak{g}}$ if
$2$ is invertible in $\mathbf{k}$. (But in general, $\mathfrak{h}$ can be
larger than $\overline{\mathfrak{g}}$.) Obviously, $\overline{\mathfrak{g}%
}\subseteq\mathfrak{h}$ and $P\subseteq\mathfrak{h}$.

\begin{proposition}
\label{prop.5}\textbf{(a)} We have $\left[  L,\overline{\mathfrak{g}}\right]
_{\operatorname*{s}}\subseteq\overline{\mathfrak{g}}$.

\textbf{(b)} Furthermore, $\left[  L,P\right]  _{\operatorname*{s}}%
\subseteq\overline{\mathfrak{g}}$ and $\left[  L,\mathfrak{h}\right]
_{\operatorname*{s}}\subseteq\overline{\mathfrak{g}}\subseteq\mathfrak{h}$.
\end{proposition}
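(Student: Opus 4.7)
The plan is to handle part \textbf{(a)} by unpacking the recursive definition of the $L_i$, and to handle part \textbf{(b)} by reducing $[L,P]_{\operatorname*{s}}$ to an iterated supercommutator via the graded Leibniz identity of Proposition \ref{prop.jacobi}\textbf{(a)}. Everything is routine once the signs are in place.

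For \textbf{(a)}, I would simply use $\mathbf{k}$-bilinearity of the supercommutator together with $\overline{\mathfrak{g}}=L_{2}+L_{3}+L_{4}+\cdots$ to get
\[
\left[L,\overline{\mathfrak{g}}\right]_{\operatorname*{s}}
=\sum_{i\geq 2}\left[L,L_{i}\right]_{\operatorname*{s}}
=\sum_{i\geq 2}L_{i+1}
=\sum_{i\geq 3}L_{i}\subseteq\overline{\mathfrak{g}},
\]
where the middle equality is the very definition $L_{i+1}=\left[L,L_{i}\right]_{\operatorname*{s}}$.

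For the first inclusion of \textbf{(b)}, I would reduce by bilinearity to showing $\left[y,xx\right]_{\operatorname*{s}}\in\overline{\mathfrak{g}}$ for arbitrary $y,x\in L$. Since $y$ has degree $1$ and $x$ has degree $1$, Proposition \ref{prop.jacobi}\textbf{(a)} (with $U=y$, $V=W=x$) gives
\[
\left[y,xx\right]_{\operatorname*{s}}
=\left[y,x\right]_{\operatorname*{s}}\cdot x+(-1)^{1\cdot 1}x\cdot\left[y,x\right]_{\operatorname*{s}}
=\left[y,x\right]_{\operatorname*{s}}x-x\left[y,x\right]_{\operatorname*{s}}.
\]
Writing $z:=\left[y,x\right]_{\operatorname*{s}}\in L_{2}$, which is homogeneous of degree $2$, I then recognize
\[
\left[x,z\right]_{\operatorname*{s}}=xz-(-1)^{1\cdot 2}zx=xz-zx,
\]
so $\left[y,xx\right]_{\operatorname*{s}}=-\left[x,z\right]_{\operatorname*{s}}=-\left[x,\left[y,x\right]_{\operatorname*{s}}\right]_{\operatorname*{s}}\in\left[L,L_{2}\right]_{\operatorname*{s}}=L_{3}\subseteq\overline{\mathfrak{g}}$, as desired. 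For the second inclusion of \textbf{(b)}, I combine the two previous results via $\mathfrak{h}=\overline{\mathfrak{g}}+P$ and bilinearity:
\[
\left[L,\mathfrak{h}\right]_{\operatorname*{s}}
=\left[L,\overline{\mathfrak{g}}\right]_{\operatorname*{s}}+\left[L,P\right]_{\operatorname*{s}}
\subseteq\overline{\mathfrak{g}}+\overline{\mathfrak{g}}=\overline{\mathfrak{g}}\subseteq\mathfrak{h}.
\]

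There is essentially no obstacle here; the only mild subtlety is the sign bookkeeping in the super-Leibniz expansion of $\left[y,xx\right]_{\operatorname*{s}}$. The key observation making the argument go through in arbitrary characteristic (so that $P$ can genuinely differ from $L_{2}$) is that although $xx$ itself need not lie in $\overline{\mathfrak{g}}$, its supercommutator with any $y\in L$ automatically collapses into the iterated bracket $-\left[x,\left[y,x\right]_{\operatorname*{s}}\right]_{\operatorname*{s}}\in L_{3}$ without any division by $2$.
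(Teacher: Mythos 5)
Your proof is correct and follows essentially the same route as the paper. Part \textbf{(a)} is identical; for part \textbf{(b)} the paper simply asserts the identity $\left[ U,xx\right]_{\operatorname*{s}}=\left[ \left[ U,x\right]_{\operatorname*{s}},x\right]_{\operatorname*{s}}$ as ``easy to check'' and then applies part \textbf{(a)}, whereas you derive the equivalent identity $\left[y,xx\right]_{\operatorname*{s}}=-\left[x,\left[y,x\right]_{\operatorname*{s}}\right]_{\operatorname*{s}}$ from Proposition \ref{prop.jacobi}\textbf{(a)} and land directly in $L_{3}=\left[L,L_{2}\right]_{\operatorname*{s}}$, which is a mild streamlining but not a different method.
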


\begin{proof}
[Proof of Proposition \ref{prop.5}.]\textbf{(a)} Since $\overline
{\mathfrak{g}}=L_{2}+L_{3}+L_{4}+\cdots=\sum_{i\geq2}L_{i}$, we have
\[
\left[  L,\overline{\mathfrak{g}}\right]  _{\operatorname*{s}}=\left[
L,\sum_{i\geq2}L_{i}\right]  _{\operatorname*{s}}=\sum_{i\geq2}%
\underbrace{\left[  L,L_{i}\right]  _{\operatorname*{s}}}_{=L_{i+1}}%
=\sum_{i\geq2}L_{i+1}=\sum_{i\geq3}L_{i}\subseteq\sum_{i\geq2}L_{i}%
=\overline{\mathfrak{g}}.
\]
Thus, Proposition \ref{prop.5} \textbf{(a)} is proven.

Also, we have $\left[  \overline{\mathfrak{g}},L\right]  _{\operatorname*{s}%
}=\left[  L,\overline{\mathfrak{g}}\right]  _{\operatorname*{s}}%
\subseteq\overline{\mathfrak{g}}$.

\textbf{(b)} It is easy to check that
\begin{equation}
\left[  U,xx\right]  _{\operatorname*{s}}=\left[  \left[  U,x\right]
_{\operatorname*{s}},x\right]  _{\operatorname*{s}} \label{pf.prop.5.b.Uxx}%
\end{equation}
for every $U\in T\left(  L\right)  $ and every $x\in L$. Hence, for every
$U\in L$ and $x\in L$, we have%
\[
\left[  U,xx\right]  _{\operatorname*{s}}=\left[  \left[  \underbrace{U}_{\in
L},\underbrace{x}_{\in L}\right]  _{\operatorname*{s}},\underbrace{x}_{\in
L}\right]  _{\operatorname*{s}}\in\left[  \underbrace{\left[  L,L\right]
_{\operatorname*{s}}}_{=L_{2}\subseteq\overline{\mathfrak{g}}},L\right]
_{\operatorname*{s}}\subseteq\left[  \overline{\mathfrak{g}},L\right]
_{\operatorname*{s}}\subseteq\overline{\mathfrak{g}}.
\]
Thus, $\left[  L,P\right]  _{\operatorname*{s}}\subseteq\overline
{\mathfrak{g}}$ (since $P$ is the $\mathbf{k}$-linear span of all elements of
$T\left(  L\right)  $ of the form $x\otimes x=xx$ with $x\in L$).

Since $\mathfrak{h}=\overline{\mathfrak{g}}+P$, we have $\left[
L,\mathfrak{h}\right]  _{\operatorname*{s}}=\underbrace{\left[  L,\overline
{\mathfrak{g}}\right]  _{\operatorname*{s}}}_{\subseteq\overline{\mathfrak{g}%
}}+\underbrace{\left[  L,P\right]  _{\operatorname*{s}}}_{\subseteq
\overline{\mathfrak{g}}}\subseteq\overline{\mathfrak{g}}+\overline
{\mathfrak{g}}\subseteq\overline{\mathfrak{g}}\subseteq\mathfrak{h}$. This
finishes the proof of Proposition \ref{prop.5} \textbf{(b)}.
\end{proof}

The following proposition will not be used in the following, but provides an
interesting aside (and explains why we are using Fraktur letters for
$\overline{\mathfrak{g}}$ and $\mathfrak{h}$):

\begin{proposition}
\label{prop.6}\textbf{(a)} We have $\left[  \mathfrak{h},\mathfrak{h}\right]
_{\operatorname*{s}}\subseteq\overline{\mathfrak{g}}\subseteq\mathfrak{h}$.

\textbf{(b)} The four $\mathbf{k}$-submodules $\overline{\mathfrak{g}}$,
$\mathfrak{h}$, $\overline{\mathfrak{g}}+L$ and $\mathfrak{h}+L$ of $T\left(
L\right)  $ are invariant under the supercommutator $\left[  \cdot
,\cdot\right]  _{\operatorname*{s}}$. (In superalgebraic terms, they are Lie
subsuperalgebras of $T\left(  L\right)  $ (with the supercommutator $\left[
\cdot,\cdot\right]  _{\operatorname*{s}}$ as the Lie bracket).)
\end{proposition}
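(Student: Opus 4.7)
The plan is to reduce both parts to a single auxiliary closure property of the $L_{i}$ under the supercommutator, which is amenable to induction via the super-Jacobi identity. The key auxiliary claim I would establish is: \emph{for all positive integers $i$ and $j$, one has $\left[  L_{i},L_{j}\right]  _{\operatorname*{s}}\subseteq L_{i+j}$.} I would prove this by strong induction on $j$. The base case $j=1$ is immediate from the definition $L_{i+1}=\left[  L,L_{i}\right]  _{\operatorname*{s}}$ together with the (super)antisymmetry of $\left[  \cdot,\cdot\right]  _{\operatorname*{s}}$ (the sign is irrelevant for set inclusion). For the inductive step I would write $L_{j}=\left[  L,L_{j-1}\right]  _{\operatorname*{s}}$ and expand $\left[  L_{i},\left[  L,L_{j-1}\right]  _{\operatorname*{s}}\right]  _{\operatorname*{s}}$ via Proposition \ref{prop.jacobi}\textbf{(b)} as the sum (up to a sign) of $\left[  \left[  L_{i},L\right]  _{\operatorname*{s}},L_{j-1}\right]  _{\operatorname*{s}}\subseteq\left[  L_{i+1},L_{j-1}\right]  _{\operatorname*{s}}$ and $\left[  L,\left[  L_{i},L_{j-1}\right]  _{\operatorname*{s}}\right]  _{\operatorname*{s}}\subseteq\left[  L,L_{i+j-1}\right]  _{\operatorname*{s}}=L_{i+j}$, where both inclusions use the inductive hypothesis with the strictly smaller second index $j-1$.

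Granted this claim, part \textbf{(a)} reduces to four pieces via $\mathfrak{h}=\overline{\mathfrak{g}}+P$ and bilinearity. The piece $\left[  \overline{\mathfrak{g}},\overline{\mathfrak{g}}\right]  _{\operatorname*{s}}$ lands in $\sum_{i,j\geq2}L_{i+j}\subseteq\overline{\mathfrak{g}}$ by the claim. For the remaining three pieces I would invoke the identity $\left[  U,xx\right]  _{\operatorname*{s}}=\left[  \left[  U,x\right]  _{\operatorname*{s}},x\right]  _{\operatorname*{s}}$ from (\ref{pf.prop.5.b.Uxx}): taking $U\in L_{i}$ with $i\geq 2$ places $\left[  U,xx\right]  _{\operatorname*{s}}$ in $L_{i+2}\subseteq\overline{\mathfrak{g}}$, handling $\left[  \overline{\mathfrak{g}},P\right]  _{\operatorname*{s}}$; and taking $U=yy\in P$ (and applying the same identity a second time to $\left[  yy,x\right]  _{\operatorname*{s}}=\pm\left[  \left[  x,y\right]  _{\operatorname*{s}},y\right]  _{\operatorname*{s}}$) places $\left[  yy,xx\right]  _{\operatorname*{s}}$ in $L_{4}\subseteq\overline{\mathfrak{g}}$, handling $\left[  P,P\right]  _{\operatorname*{s}}$. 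The remaining piece $\left[  P,\overline{\mathfrak{g}}\right]  _{\operatorname*{s}}$ follows from $\left[  \overline{\mathfrak{g}},P\right]  _{\operatorname*{s}}$ by antisymmetry. The inclusion $\overline{\mathfrak{g}}\subseteq\mathfrak{h}$ is immediate from $\mathfrak{h}=\overline{\mathfrak{g}}+P$.

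For part \textbf{(b)}, the invariance of $\overline{\mathfrak{g}}$ and $\mathfrak{h}$ is already contained in the auxiliary claim and in part \textbf{(a)}, respectively. For $\overline{\mathfrak{g}}+L$, I would expand $\left[  \overline{\mathfrak{g}}+L,\overline{\mathfrak{g}}+L\right]  _{\operatorname*{s}}$ and observe that all four resulting summands actually land inside $\overline{\mathfrak{g}}\subseteq\overline{\mathfrak{g}}+L$: the mixed terms by Proposition \ref{prop.5}\textbf{(a)}, the term $\left[  L,L\right]  _{\operatorname*{s}}=L_{2}$ by definition, and $\left[  \overline{\mathfrak{g}},\overline{\mathfrak{g}}\right]  _{\operatorname*{s}}$ by the claim. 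The case of $\mathfrak{h}+L$ is entirely analogous, now using Proposition \ref{prop.5}\textbf{(b)} to bound the contributions coming from $P$ inside $\left[  L,\mathfrak{h}\right]  _{\operatorname*{s}}$.

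The only non-routine step is the auxiliary induction, and even there the only real care required is to ensure that the second bracket argument strictly decreases at each application of super-Jacobi; the signs produced by Proposition \ref{prop.jacobi}\textbf{(b)} and by superantisymmetry are irrelevant for the submodule inclusions being tracked, so no delicate sign bookkeeping is needed.
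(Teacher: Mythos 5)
Your proposal is correct and follows essentially the same route as the paper: the paper also proves the key auxiliary inclusion $\left[  L_{i},L_{j}\right]  _{\operatorname*{s}}\subseteq L_{i+j}$ by induction on $j$ via the super-Jacobi identity and then reduces $\left[  \mathfrak{h},\mathfrak{h}\right]  _{\operatorname*{s}}\subseteq\overline{\mathfrak{g}}$ to this claim together with repeated use of the identity $\left[  U,xx\right]  _{\operatorname*{s}}=\left[  \left[  U,x\right]  _{\operatorname*{s}},x\right]  _{\operatorname*{s}}$. The only cosmetic difference is that you split $\left[  \mathfrak{h},\mathfrak{h}\right]  _{\operatorname*{s}}$ directly into the four pieces $\left[  \overline{\mathfrak{g}},\overline{\mathfrak{g}}\right]  _{\operatorname*{s}},\left[  \overline{\mathfrak{g}},P\right]  _{\operatorname*{s}},\left[  P,\overline{\mathfrak{g}}\right]  _{\operatorname*{s}},\left[  P,P\right]  _{\operatorname*{s}}$ (handling $\left[  P,P\right]  _{\operatorname*{s}}$ by a double application of the $xx$-identity), whereas the paper routes everything through the intermediate module $\sum_{i\geq1}L_{i}$, first establishing $\left[  P,\sum_{i\geq1}L_{i}\right]  _{\operatorname*{s}}\subseteq\overline{\mathfrak{g}}$ and $\left[  \mathfrak{h},L\right]  _{\operatorname*{s}}\subseteq\overline{\mathfrak{g}}$ and then deriving $\left[  \mathfrak{h},P\right]  _{\operatorname*{s}}\subseteq\overline{\mathfrak{g}}$ from those; both bookkeeping schemes are equivalent and of the same length.
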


\begin{proof}
[Proof of Proposition \ref{prop.6}.]\textbf{(a)} First, we notice that%
\begin{equation}
\left[  P,\sum_{i\geq1}L_{i}\right]  _{\operatorname*{s}}\subseteq
\overline{\mathfrak{g}}. \label{pf.prop.6.a.PLi}%
\end{equation}
\footnote{\textit{Proof of (\ref{pf.prop.6.a.PLi}):} It is clearly enough to
show that $\left[  P,L_{i}\right]  _{\operatorname*{s}}\subseteq
\overline{\mathfrak{g}}$ for all positive integers $i$. So let us do this. Let
$i$ be a positive integer. We need to show that $\left[  P,L_{i}\right]
_{\operatorname*{s}}\subseteq\overline{\mathfrak{g}}$. In other words, we need
to show that $\left[  xx,L_{i}\right]  _{\operatorname*{s}}\subseteq
\overline{\mathfrak{g}}$ for every $x\in L$ (because the $\mathbf{k}$-module
$P$ is spanned by elements of the form $x\otimes x=xx$ with $x\in L$). So fix
$x\in L$. Then,%
\begin{align*}
\left[  xx,L_{i}\right]  _{\operatorname*{s}}  &  =\left[  L_{i},xx\right]
_{\operatorname*{s}}=\left[  \left[  L_{i},\underbrace{x}_{\in L}\right]
_{\operatorname*{s}},\underbrace{x}_{\in L}\right]  _{\operatorname*{s}%
}\ \ \ \ \ \ \ \ \ \ \left(  \text{by (\ref{pf.prop.5.b.Uxx})}\right) \\
&  \subseteq\left[  \underbrace{\left[  L_{i},L\right]  _{\operatorname*{s}}%
}_{=\left[  L,L_{i}\right]  _{\operatorname*{s}}=L_{i+1}},L\right]
_{\operatorname*{s}}=\left[  L_{i+1},L\right]  _{\operatorname*{s}}=\left[
L,L_{i+1}\right]  _{\operatorname*{s}}=L_{i+2}\\
&  \subseteq L_{2}+L_{3}+L_{4}+\cdots=\overline{\mathfrak{g}}.
\end{align*}
This completes our proof of (\ref{pf.prop.6.a.PLi}).}

Next, we notice that any two positive integers $i$ and $j$ satisfy%
\begin{equation}
\left[  L_{i},L_{j}\right]  _{\operatorname*{s}}\subseteq L_{i+j}.
\label{pf.prop.6.a.LiLj}%
\end{equation}
\footnote{\textit{Proof of (\ref{pf.prop.6.a.LiLj}):} We shall prove
(\ref{pf.prop.6.a.LiLj}) by induction over $j$:
\par
\textit{Induction base:} For every positive integer $i$, we have $\left[
L_{i},\underbrace{L_{1}}_{=L}\right]  _{\operatorname*{s}}=\left[
L_{i},L\right]  _{\operatorname*{s}}=\left[  L,L_{i}\right]
_{\operatorname*{s}}=L_{i+1}$. Thus, (\ref{pf.prop.6.a.LiLj}) holds for $j=1$.
This completes the induction base.
\par
\textit{Induction step:} Let $J$ be a positive integer. Assume that
(\ref{pf.prop.6.a.LiLj}) is proven for $j=J$. We now need to prove
(\ref{pf.prop.6.a.LiLj}) for $j=J+1$.
\par
We have assumed that (\ref{pf.prop.6.a.LiLj}) is proven for $j=J$. In other
words,%
\begin{equation}
\left[  L_{i},L_{J}\right]  _{\operatorname*{s}}\subseteq L_{i+J}%
\ \ \ \ \ \ \ \ \ \ \text{for every positive integer }i.
\label{pf.prop.6.a.LiLj.pf.1}%
\end{equation}
\par
Now, let $i$ be a positive integer. Then, $L_{i}$, $L_{J}$ and $L$ are graded
$\mathbf{k}$-submodules of $T\left(  L\right)  $, and we have%
\begin{align*}
\left[  L_{i},\underbrace{L_{J+1}}_{=\left[  L,L_{J}\right]
_{\operatorname*{s}}}\right]  _{\operatorname*{s}}  &  =\left[  L_{i},\left[
L,L_{J}\right]  _{\operatorname*{s}}\right]  _{\operatorname*{s}}%
\subseteq\left[  \underbrace{\left[  L_{i},L\right]  _{\operatorname*{s}}%
}_{=\left[  L,L_{i}\right]  _{\operatorname*{s}}=L_{i+1}},L_{J}\right]
_{\operatorname*{s}}+\left[  L,\underbrace{\left[  L_{i},L_{J}\right]
_{\operatorname*{s}}}_{\substack{\subseteq L_{i+J}\\\text{(by
(\ref{pf.prop.6.a.LiLj.pf.1}))}}}\right]  _{\operatorname*{s}}\\
&  \ \ \ \ \ \ \ \ \ \ \left(  \text{by Proposition \ref{prop.jacobi}
\textbf{(b)}}\right) \\
&  \subseteq\underbrace{\left[  L_{i+1},L_{J}\right]  _{\operatorname*{s}}%
}_{\substack{\subseteq L_{\left(  i+1\right)  +J}\\\text{(by
(\ref{pf.prop.6.a.LiLj.pf.1}), applied to}\\i+1\text{ instead of }i\text{)}%
}}+\underbrace{\left[  L,L_{i+J}\right]  _{\operatorname*{s}}}_{=L_{i+J+1}%
=L_{i+\left(  J+1\right)  }}\\
&  \subseteq\underbrace{L_{\left(  i+1\right)  +J}}_{=L_{i+\left(  J+1\right)
}}+L_{i+\left(  J+1\right)  }=L_{i+\left(  J+1\right)  }+L_{i+\left(
J+1\right)  }=L_{i+\left(  J+1\right)  }.
\end{align*}
In other words, (\ref{pf.prop.6.a.LiLj}) holds for $j=J+1$. This completes the
induction step. Thus, (\ref{pf.prop.6.a.LiLj}) is proven by induction.} Now,%
\begin{align*}
\left[  \sum_{i\geq1}L_{i},\sum_{i\geq1}L_{i}\right]  _{\operatorname*{s}}  &
=\left[  \sum_{i\geq1}L_{i},\sum_{j\geq1}L_{j}\right]  _{\operatorname*{s}%
}=\sum_{i\geq1}\sum_{j\geq1}\underbrace{\left[  L_{i},L_{j}\right]
_{\operatorname*{s}}}_{\substack{\subseteq L_{i+j}\\\text{(by
(\ref{pf.prop.6.a.LiLj}))}}}\subseteq\sum_{i\geq1}\sum_{j\geq1}L_{i+j}\\
&  \subseteq\sum_{k\geq2}L_{k}\ \ \ \ \ \ \ \ \ \ \left(  \text{since }%
i+j\geq2\text{ for any }i\geq1\text{ and }j\geq1\right) \\
&  =L_{2}+L_{3}+L_{4}+\cdots=\overline{\mathfrak{g}}.
\end{align*}
Recall now that $\overline{\mathfrak{g}}=L_{2}+L_{3}+L_{4}+\cdots=\sum
_{i\geq2}L_{i}\subseteq\sum_{i\geq1}L_{i}$. Thus,%
\[
\left[  \overline{\mathfrak{g}},\sum_{i\geq1}L_{i}\right]  _{\operatorname*{s}%
}\subseteq\left[  \sum_{i\geq1}L_{i},\sum_{i\geq1}L_{i}\right]
_{\operatorname*{s}}\subseteq\overline{\mathfrak{g}}.
\]

Since $\mathfrak{h}=\overline{\mathfrak{g}}+P$, we have%
\[
\left[  \mathfrak{h},\sum_{i\geq1}L_{i}\right]  _{\operatorname*{s}%
}=\underbrace{\left[  \overline{\mathfrak{g}},\sum_{i\geq1}L_{i}\right]
_{\operatorname*{s}}}_{\subseteq\overline{\mathfrak{g}}}+\underbrace{\left[
P,\sum_{i\geq1}L_{i}\right]  _{\operatorname*{s}}}_{\substack{\subseteq
\overline{\mathfrak{g}}\\\text{(by (\ref{pf.prop.6.a.PLi}))}}}\subseteq
\overline{\mathfrak{g}}+\overline{\mathfrak{g}}=\overline{\mathfrak{g}}.
\]
Since $\overline{\mathfrak{g}}\subseteq\sum_{i\geq1}L_{i}$, we now have
\begin{equation}
\left[  \mathfrak{h},\overline{\mathfrak{g}}\right]  _{\operatorname*{s}%
}\subseteq\left[  \mathfrak{h},\sum_{i\geq1}L_{i}\right]  _{\operatorname*{s}%
}\subseteq\overline{\mathfrak{g}}. \label{pf.prop.6.a.4}%
\end{equation}
But we also have $L=L_{1}\subseteq\sum_{i\geq1}L_{i}$ and thus%
\begin{equation}
\left[  \mathfrak{h},L\right]  _{\operatorname*{s}}\subseteq\left[
\mathfrak{h},\sum_{i\geq1}L_{i}\right]  _{\operatorname*{s}}\subseteq
\overline{\mathfrak{g}}. \label{pf.prop.6.a.6}%
\end{equation}
From this, we easily obtain%
\[
\left[  \mathfrak{h},P\right]  _{\operatorname*{s}}\subseteq\overline
{\mathfrak{g}}%
\]
\footnote{\textit{Proof.} It is clearly enough to show that $\left[
\mathfrak{h},xx\right]  _{\operatorname*{s}}\subseteq\overline{\mathfrak{g}}$
for every $x\in L$ (since the $\mathbf{k}$-module $P$ is spanned by elements
of the form $x\otimes x=xx$ for $x\in L$). So let $x\in L$. Then,%
\begin{align*}
\left[  \mathfrak{h},xx\right]  _{\operatorname*{s}}  &  \subseteq\left[
\left[  \mathfrak{h},\underbrace{x}_{\in L}\right]  _{\operatorname*{s}%
},\underbrace{x}_{\in L}\right]  _{\operatorname*{s}}%
\ \ \ \ \ \ \ \ \ \ \left(  \text{by (\ref{pf.prop.5.b.Uxx})}\right) \\
&  \subseteq\left[  \underbrace{\left[  \mathfrak{h},L\right]
_{\operatorname*{s}}}_{\subseteq\overline{\mathfrak{g}}},L\right]
_{\operatorname*{s}}\subseteq\left[  \overline{\mathfrak{g}},L\right]
_{\operatorname*{s}}=\left[  L,\overline{\mathfrak{g}}\right]
_{\operatorname*{s}}\subseteq\overline{\mathfrak{g}}%
\ \ \ \ \ \ \ \ \ \ \left(  \text{by Proposition \ref{prop.5} \textbf{(a)}%
}\right)  ,
\end{align*}
qed.}.

Now, using $\mathfrak{h}=\overline{\mathfrak{g}}+P$ again, we obtain%
\[
\left[  \mathfrak{h},\mathfrak{h}\right]  _{\operatorname*{s}}%
=\underbrace{\left[  \mathfrak{h},\overline{\mathfrak{g}}\right]
_{\operatorname*{s}}}_{\substack{\subseteq\overline{\mathfrak{g}}\\\text{(by
(\ref{pf.prop.6.a.4}))}}}+\underbrace{\left[  \mathfrak{h},P\right]
_{\operatorname*{s}}}_{\subseteq\overline{\mathfrak{g}}}\subseteq
\overline{\mathfrak{g}}+\overline{\mathfrak{g}}=\overline{\mathfrak{g}}.
\]
This proves Proposition \ref{prop.6} \textbf{(a)}.

\textbf{(b)} We need to show that $\left[  \overline{\mathfrak{g}}%
,\overline{\mathfrak{g}}\right]  _{\operatorname*{s}}\subseteq\overline
{\mathfrak{g}}$, $\left[  \mathfrak{h},\mathfrak{h}\right]
_{\operatorname*{s}}\subseteq\mathfrak{h}$, $\left[  \overline{\mathfrak{g}%
}+L,\overline{\mathfrak{g}}+L\right]  _{\operatorname*{s}}\subseteq
\overline{\mathfrak{g}}+L$ and $\left[  \mathfrak{h}+L,\mathfrak{h}+L\right]
_{\operatorname*{s}}\subseteq\mathfrak{h}+L$.

The relation $\left[  \overline{\mathfrak{g}},\overline{\mathfrak{g}}\right]
_{\operatorname*{s}}\subseteq\overline{\mathfrak{g}}$ follows immediately from
$\left[  \underbrace{\overline{\mathfrak{g}}}_{\subseteq\mathfrak{h}%
},\underbrace{\overline{\mathfrak{g}}}_{\subseteq\mathfrak{h}}\right]
_{\operatorname*{s}}\subseteq\left[  \mathfrak{h},\mathfrak{h}\right]
_{\operatorname*{s}}\subseteq\overline{\mathfrak{g}}$. The relation $\left[
\mathfrak{h},\mathfrak{h}\right]  _{\operatorname*{s}}\subseteq\mathfrak{h}$
follows from $\left[  \mathfrak{h},\mathfrak{h}\right]  _{\operatorname*{s}%
}\subseteq\overline{\mathfrak{g}}\subseteq\mathfrak{h}$.

We have%
\begin{align*}
\left[  \mathfrak{h}+L,\mathfrak{h}+L\right]  _{\operatorname*{s}}  &
=\underbrace{\left[  \mathfrak{h},\mathfrak{h}\right]  _{\operatorname*{s}}%
}_{\subseteq\overline{\mathfrak{g}}}+\underbrace{\left[  \mathfrak{h}%
,L\right]  _{\operatorname*{s}}}_{\substack{\subseteq\overline{\mathfrak{g}%
}\\\text{(by (\ref{pf.prop.6.a.6}))}}}+\underbrace{\left[  L,\mathfrak{h}%
\right]  _{\operatorname*{s}}}_{\substack{=\left[  \mathfrak{h},L\right]
_{\operatorname*{s}}\subseteq\overline{\mathfrak{g}}\\\text{(by
(\ref{pf.prop.6.a.6}))}}}+\underbrace{\left[  L,L\right]  _{\operatorname*{s}%
}}_{\substack{=L_{2}\subseteq L_{2}+L_{3}+L_{4}+\cdots\\=\overline
{\mathfrak{g}}}}\\
&  \subseteq\overline{\mathfrak{g}}+\overline{\mathfrak{g}}+\overline
{\mathfrak{g}}+\overline{\mathfrak{g}}=\overline{\mathfrak{g}}.
\end{align*}
Thus, $\left[  \underbrace{\overline{\mathfrak{g}}}_{\subseteq\mathfrak{h}%
}+L,\underbrace{\overline{\mathfrak{g}}}_{\subseteq\mathfrak{h}}+L\right]
_{\operatorname*{s}}\subseteq\left[  \mathfrak{h}+L,\mathfrak{h}+L\right]
_{\operatorname*{s}}\subseteq\overline{\mathfrak{g}}\subseteq\overline
{\mathfrak{g}}+L$ and $\left[  \mathfrak{h}+L,\mathfrak{h}+L\right]
_{\operatorname*{s}}\subseteq\overline{\mathfrak{g}}\subseteq\mathfrak{h}%
\subseteq\mathfrak{h}+L$. This proves everything we needed to show.
Proposition \ref{prop.6} \textbf{(b)} is thus shown.
\end{proof}

\section{The kernel of $\mathbf{t}$}

\begin{definition}
If $U$ is any $\mathbf{k}$-submodule of a $\mathbf{k}$-algebra $A$, then we
define $U^{i}$ to be the $\mathbf{k}$-submodule $\underbrace{UU\cdots
U}_{i\text{ times}}$ of $A$ for every $i\in\mathbb{N}$. When $i=0$, this
$\mathbf{k}$-submodule means the copy of $\mathbf{k}$ in $A$ (that is, the
$\mathbf{k}$-linear span of $1_{A}$).

If $U$ is any $\mathbf{k}$-submodule of a $\mathbf{k}$-algebra $A$, then we
let $U^{\star}$ denote the $\mathbf{k}$-submodule $U^{0}+U^{1}+U^{2}+\cdots$
of $A$. This is the $\mathbf{k}$-subalgebra of $A$ generated by $U$.
\end{definition}

The reader should keep in mind that $U^{\star}$ (the $\mathbf{k}$-subalgebra
of $A$ generated by $U$) and $U^{\ast}$ (the dual $\mathbf{k}$-module of $U$)
are two different things; they are to be distinguished by the shape of the asterisk/star.

\begin{proposition}
\label{prop.7}We have $\mathfrak{h}^{\star}\subseteq\operatorname*{Ker}%
\mathbf{t}$.
\end{proposition}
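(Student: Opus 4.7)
The plan is to reduce the claim, via the parts of Proposition \ref{prop.3} already established, to two straightforward inductions. First I would show that $\mathfrak{h} \subseteq \operatorname{Ker}\mathbf{t}$. Since $\mathfrak{h} = \overline{\mathfrak{g}} + P$, it suffices to check the two summands separately. The inclusion $P \subseteq \operatorname{Ker}\mathbf{t}$ is immediate from Proposition \ref{prop.3} \textbf{(e)} (as $P$ is spanned by elements $xx$ with $x \in L$). For $\overline{\mathfrak{g}} = \sum_{i \geq 2} L_i$, I would prove by induction on $i \geq 1$ that $L_{i+1} \subseteq \operatorname{Ker}\mathbf{t}$. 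The base case $i = 1$ gives $L_2 = [L, L]_{\operatorname{s}} \subseteq \operatorname{Ker}\mathbf{t}$ by Proposition \ref{prop.3} \textbf{(c)}. For the induction step, if $L_i \subseteq \operatorname{Ker}\mathbf{t}$, then
\[
L_{i+1} = [L, L_i]_{\operatorname{s}} \subseteq [L, \operatorname{Ker}\mathbf{t}]_{\operatorname{s}} \subseteq \operatorname{Ker}\mathbf{t}
\]
by Proposition \ref{prop.3} \textbf{(d)}. Summing over $i \geq 2$ yields $\overline{\mathfrak{g}} \subseteq \operatorname{Ker}\mathbf{t}$, and hence $\mathfrak{h} = \overline{\mathfrak{g}} + P \subseteq \operatorname{Ker}\mathbf{t}$.

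Second, I would upgrade $\mathfrak{h} \subseteq \operatorname{Ker}\mathbf{t}$ to $\mathfrak{h}^{\star} \subseteq \operatorname{Ker}\mathbf{t}$. Since $\mathfrak{h}^{\star} = \sum_{i \geq 0} \mathfrak{h}^i$, it is enough to show $\mathfrak{h}^i \subseteq \operatorname{Ker}\mathbf{t}$ for each $i \in \mathbb{N}$, which I would again do by induction on $i$. The case $i = 0$ is the inclusion $L^{\otimes 0} \subseteq \operatorname{Ker}\mathbf{t}$ from Proposition \ref{prop.3} \textbf{(a)}. For the step, if $\mathfrak{h}^i \subseteq \operatorname{Ker}\mathbf{t}$, then
\[
\mathfrak{h}^{i+1} = \mathfrak{h} \cdot \mathfrak{h}^i \subseteq \operatorname{Ker}\mathbf{t} \cdot \operatorname{Ker}\mathbf{t} \subseteq \operatorname{Ker}\mathbf{t}
\]
by Proposition \ref{prop.3} \textbf{(b)}. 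This closes the induction.

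There is no real obstacle here: every ingredient needed has already been packaged into parts \textbf{(a)}--\textbf{(e)} of Proposition \ref{prop.3}, and the two inductions above are essentially mechanical. The only mildly subtle point worth a sentence of justification is the reduction of ``$\overline{\mathfrak{g}} \subseteq \operatorname{Ker}\mathbf{t}$'' to the inductive claim $L_i \subseteq \operatorname{Ker}\mathbf{t}$, which uses that $\operatorname{Ker}\mathbf{t}$ is a $\mathbf{k}$-submodule (so closed under sums) — and similarly the reduction of ``$\mathfrak{h}^{\star} \subseteq \operatorname{Ker}\mathbf{t}$'' to the inductive claim $\mathfrak{h}^i \subseteq \operatorname{Ker}\mathbf{t}$.
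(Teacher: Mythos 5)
Your proof is correct and follows essentially the same route as the paper's: use Proposition \ref{prop.3} \textbf{(e)} for $P$, induct via parts \textbf{(c)} and \textbf{(d)} for each $L_i$ to get $\overline{\mathfrak{g}}\subseteq\operatorname{Ker}\mathbf{t}$, and then pass to $\mathfrak{h}^\star$ using that $\operatorname{Ker}\mathbf{t}$ is a $\mathbf{k}$-subalgebra (parts \textbf{(a)} and \textbf{(b)}). The only difference is cosmetic: you spell out the second induction on powers $\mathfrak{h}^i$, whereas the paper simply invokes the fact that $\mathfrak{h}^\star$ is the subalgebra generated by $\mathfrak{h}$.
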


\begin{proof}
[Proof of Proposition \ref{prop.7}.]We have $P\subseteq\operatorname*{Ker}%
\mathbf{t}$ due to Proposition \ref{prop.3} \textbf{(e)}. Also, $L_{2}=\left[
L,L\right]  _{\operatorname*{s}}\subseteq\operatorname*{Ker}\mathbf{t}$ by
Proposition \ref{prop.3} \textbf{(c)}. Using this and Proposition \ref{prop.3}
\textbf{(d)}, we can show that $L_{i}\subseteq\operatorname*{Ker}\mathbf{t}$
for each $i\geq2$ (by induction over $i$). Thus, $\overline{\mathfrak{g}%
}\subseteq\operatorname*{Ker}\mathbf{t}$ (since $\overline{\mathfrak{g}}%
=L_{2}+L_{3}+L_{4}+\cdots$). Combined with $P\subseteq\operatorname*{Ker}%
\mathbf{t}$, this yields $\mathfrak{h}\subseteq\operatorname*{Ker}\mathbf{t}$
(since $\mathfrak{h}=\overline{\mathfrak{g}}+P$). Since $\operatorname*{Ker}%
\mathbf{t}$ is a $\mathbf{k}$-subalgebra of $T\left(  L\right)  $, this yields
that $\mathfrak{h}^{\star}\subseteq\operatorname*{Ker}\mathbf{t}$. This proves
Proposition \ref{prop.7}.
\end{proof}

Our main goal is to prove that the inclusion in Proposition \ref{prop.7}
actually becomes an equality if the $\mathbf{k}$-module $L$ is free. First, we
show three simple lemmas:

\begin{lemma}
\label{lem.Kert.1}Let $u\in L$. Let $S$ be a graded $\mathbf{k}$-submodule of
$T\left(  L\right)  $ such that $uu\in S^{\star}$ and $\left[  u,S\right]
_{\operatorname*{s}}\subseteq S^{\star}$. Then, $S^{\star}+S^{\star}u$ is a
$\mathbf{k}$-subalgebra of $T\left(  L\right)  $.
\end{lemma}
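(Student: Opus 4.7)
The plan is to verify that $S^{\star}+S^{\star}u$ contains $1$ and is closed under multiplication. Containment of $1$ is immediate from $1\in S^{0}\subseteq S^{\star}$. For multiplicative closure, a typical product is
\[
\left(a+bu\right)\left(c+du\right)=ac+adu+b\left(uc\right)+b\left(ud\right)u
\]
with $a,b,c,d\in S^{\star}$. The obstacle is that $uc$ and $ud$ need not lie in $S^{\star}$, so we must first learn how to slide a $u$ past arbitrary elements of $S^{\star}$.

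The key claim I would prove first is:
\[
uS^{\star}\subseteq S^{\star}+S^{\star}u.
\]
I would establish this by induction on $i$, showing $uS^{i}\subseteq S^{\star}+S^{\star}u$ for every $i\in\mathbb{N}$. The case $i=0$ is immediate since $uS^{0}=\mathbf{k}u\subseteq S^{\star}u$. For the inductive step, a generator of $S^{i}$ has the form $s_{1}s_{2}\cdots s_{i}$ with each $s_{j}\in S$, and since $S$ is graded I may take each $s_{j}$ homogeneous of some degree $n_{j}$. The definition of the supercommutator gives
\[
us_{1}=\left[u,s_{1}\right]_{\operatorname*{s}}+\left(-1\right)^{n_{1}}s_{1}u,
\]
hence
\[
us_{1}s_{2}\cdots s_{i}=\left[u,s_{1}\right]_{\operatorname*{s}}\cdot s_{2}\cdots s_{i}+\left(-1\right)^{n_{1}}s_{1}\cdot\left(us_{2}\cdots s_{i}\right).
\]
The first summand lies in $S^{\star}\cdot S^{\star}=S^{\star}$ by the hypothesis $\left[u,S\right]_{\operatorname*{s}}\subseteq S^{\star}$, and the second lies in $s_{1}\left(S^{\star}+S^{\star}u\right)\subseteq S^{\star}+S^{\star}u$ by the inductive hypothesis applied to $s_{2}\cdots s_{i}\in S^{i-1}$ and the fact that $S^{\star}$ is a subalgebra.

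Granting the claim, I finish as follows. Write $uc=x+yu$ and $ud=x^{\prime}+y^{\prime}u$ with $x,y,x^{\prime},y^{\prime}\in S^{\star}$. Then
\[
b\left(uc\right)=bx+byu\in S^{\star}+S^{\star}u,
\]
while
\[
b\left(ud\right)u=bx^{\prime}u+by^{\prime}\cdot\left(uu\right);
\]
here the term $by^{\prime}\cdot uu$ lies in $S^{\star}\cdot S^{\star}=S^{\star}$ precisely because of the hypothesis $uu\in S^{\star}$. Adding $ac\in S^{\star}$ and $adu\in S^{\star}u$ shows that $\left(a+bu\right)\left(c+du\right)\in S^{\star}+S^{\star}u$, completing the proof.

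The main obstacle is the key claim $uS^{\star}\subseteq S^{\star}+S^{\star}u$; everything else is a formal consequence. Within that claim, the two hypotheses of the lemma play complementary roles: $\left[u,S\right]_{\operatorname*{s}}\subseteq S^{\star}$ lets one move $u$ past a single factor from $S$ at the cost of an element of $S^{\star}$, while $uu\in S^{\star}$ is what tames the final $\left(bu\right)\left(du\right)$ term in the product expansion.
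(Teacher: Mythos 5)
Your proof is correct and takes essentially the same approach as the paper's: both hinge on the key claim $uS^{\star}\subseteq S^{\star}+S^{\star}u$, proven by an induction that slides $u$ past one factor from $S$ at a time via $us=\left[u,s\right]_{\operatorname*{s}}+\left(-1\right)^{\deg s}su$, and both close the argument by expanding a general product and invoking $uu\in S^{\star}$ to absorb the $\left(S^{\star}u\right)\left(S^{\star}u\right)$ term. The only cosmetic difference is that the paper records the sharper intermediate containment $uS^{i}\subseteq S^{\star}+S^{i}u$ (peeling off the last factor in the inductive step), whereas you peel off the first factor and settle for $uS^{i}\subseteq S^{\star}+S^{\star}u$, which suffices just as well.
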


\begin{proof}
[Proof of Lemma \ref{lem.Kert.1}.]Clearly, $1\in S^{\star}\subseteq S^{\star
}+S^{\star}u$. Hence, it only remains to show that $\left(  S^{\star}%
+S^{\star}u\right)  \left(  S^{\star}+S^{\star}u\right)  \subseteq S^{\star
}+S^{\star}u$.

We know that $S^{\star}$ is a $\mathbf{k}$-subalgebra of $T\left(  L\right)
$; thus, $S^{\star}S^{\star}\subseteq S^{\star}$. Hence, $S^{\star
}\underbrace{uu}_{\in S^{\star}}\subseteq S^{\star}S^{\star}\subseteq
S^{\star}$.

We have%
\begin{equation}
uS\subseteq S^{\star}+Su \label{pf.lem.Kert.1.1}%
\end{equation}
\footnote{\textit{Proof of (\ref{pf.lem.Kert.1.1}):} It clearly suffices to
show that $us\in Su+S^{\star}$ for every $s\in S$. So let us fix some $s\in
S$. We can WLOG assume that $s$ is homogeneous (since $S$ is graded). Assume
this. Then, $u\in L^{\otimes n}$ for some $n\in\mathbb{N}$. Consider this $n$.
Thus, $\left[  u,s\right]  _{\operatorname*{s}}=us-\left(  -1\right)  ^{1\cdot
n}su$, so that%
\[
us=\left[  u,\underbrace{s}_{\in S}\right]  _{\operatorname*{s}}%
+\underbrace{\left(  -1\right)  ^{1\cdot n}s}_{\in S}u\in\underbrace{\left[
u,S\right]  _{\operatorname*{s}}}_{\subseteq S^{\star}}+Su\subseteq S^{\star
}+Su.
\]
This proves (\ref{pf.lem.Kert.1.1}).}. Now, for every $i\in\mathbb{N}$, we
have%
\begin{equation}
uS^{i}\subseteq S^{\star}+S^{i}u \label{pf.lem.Kert.1.2}%
\end{equation}
\footnote{\textit{Proof of (\ref{pf.lem.Kert.1.2}):} We will prove
(\ref{pf.lem.Kert.1.2}) by induction over $i$:
\par
\textit{Induction base:} We have $u\underbrace{S^{0}}_{=\mathbf{k}%
}=u\mathbf{k}=\underbrace{\mathbf{k}}_{=S^{0}}u=S^{0}u\subseteq S^{\star
}+S^{0}u$. In other words, (\ref{pf.lem.Kert.1.2}) holds for $i=0$. This
completes the induction base.
\par
\textit{Induction step:} Let $I\in\mathbb{N}$. Assume that
(\ref{pf.lem.Kert.1.2}) is proven for $i=I$. We need to prove
(\ref{pf.lem.Kert.1.2}) for $i=I+1$.
\par
We know that (\ref{pf.lem.Kert.1.2}) is proven for $i=I$. In other words,
$uS^{I}\subseteq S^{\star}+S^{I}u$. Thus,%
\begin{align*}
u\underbrace{S^{I+1}}_{=S^{I}S}  &  =\underbrace{uS^{I}}_{\subseteq S^{\star
}+S^{I}u}S\subseteq\left(  S^{\star}+S^{I}u\right)  S\subseteq
\underbrace{S^{\star}S}_{\subseteq S^{\star}}+S^{I}\underbrace{uS}%
_{\substack{\subseteq S^{\star}+Su\\\text{(by (\ref{pf.lem.Kert.1.1}))}}}\\
&  \subseteq S^{\star}+S^{I}\left(  S^{\star}+Su\right)  \subseteq S^{\star
}+\underbrace{S^{I}S^{\star}}_{\subseteq S^{\star}}+\underbrace{S^{I}%
S}_{=S^{I+1}}u\subseteq\underbrace{S^{\star}+S^{\star}}_{=S^{\star}}%
+S^{I+1}u=S^{\star}+S^{I+1}u.
\end{align*}
In other words, (\ref{pf.lem.Kert.1.2}) holds for $i=I+1$. This completes the
induction step, and thus (\ref{pf.lem.Kert.1.2}) is proven.}. Hence,%
\[
uS^{\star}\subseteq S^{\star}+S^{\star}u
\]
\footnote{since $S^{\star}=S^{0}+S^{1}+S^{2}+\cdots=\sum_{i\in\mathbb{N}}%
S^{i}$ and thus%
\begin{align*}
u\underbrace{S^{\star}}_{=\sum_{i\in\mathbb{N}}S^{i}}  &  =u\left(  \sum
_{i\in\mathbb{N}}S^{i}\right)  =\sum_{i\in\mathbb{N}}\underbrace{uS^{i}%
}_{\substack{\subseteq S^{\star}+S^{i}u\\\text{(by (\ref{pf.lem.Kert.1.2}))}%
}}\subseteq\sum_{i\in\mathbb{N}}\left(  S^{\star}+S^{i}u\right) \\
&  \subseteq S^{\star}+\sum_{i\in\mathbb{N}}S^{i}u=S^{\star}%
+\underbrace{\left(  \sum_{i\in\mathbb{N}}S^{i}\right)  }_{=S^{\star}%
}u=S^{\star}+S^{\star}u
\end{align*}
}. Thus,%
\begin{align*}
u\left(  S^{\star}+S^{\star}u\right)   &  =\underbrace{uS^{\star}}_{\subseteq
S^{\star}+S^{\star}u}+\underbrace{uS^{\star}}_{\subseteq S^{\star}+S^{\star}%
u}u\\
&  \subseteq\left(  S^{\star}+S^{\star}u\right)  +\left(  S^{\star}+S^{\star
}u\right)  u=S^{\star}+S^{\star}u+S^{\star}u+\underbrace{S^{\star}%
uu}_{\subseteq S^{\star}}\\
&  \subseteq S^{\star}+S^{\star}u+S^{\star}u+S^{\star}=S^{\star}+S^{\star}u,
\end{align*}
so that%
\begin{align*}
&  \left(  S^{\star}+S^{\star}u\right)  \left(  S^{\star}+S^{\star}u\right) \\
&  =S^{\star}\left(  S^{\star}+S^{\star}u\right)  +S^{\star}%
\underbrace{u\left(  S^{\star}+S^{\star}u\right)  }_{\subseteq S^{\star
}+S^{\star}u}\\
&  \subseteq S^{\star}\left(  S^{\star}+S^{\star}u\right)  +S^{\star}\left(
S^{\star}+S^{\star}u\right)  =S^{\star}\left(  S^{\star}+S^{\star}u\right)
=\underbrace{S^{\star}S^{\star}}_{\subseteq S^{\star}}+\underbrace{S^{\star
}S^{\star}}_{\subseteq S^{\star}}u\subseteq S^{\star}+S^{\star}u,
\end{align*}
and this completes our proof of Lemma \ref{lem.Kert.1}.
\end{proof}

\begin{lemma}
\label{lem.Kert.deriv0.gen} Let $N$ be a graded $\mathbf{k}$-submodule of
$T\left(  L\right)  $. Let $g\in L^{\ast}$ be such that $\partial_{g}\left(  N
\right)  =0$. Then, $\partial_{g}\left(  N^{\star}\right)  = 0$.
\end{lemma}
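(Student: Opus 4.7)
The plan is to prove the lemma by induction on $i$, establishing that $\partial_g(N^i) = 0$ for every $i \in \mathbb{N}$, and then using $N^\star = \sum_{i \in \mathbb{N}} N^i$ together with the $\mathbf{k}$-linearity of $\partial_g$ to conclude.

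First I would handle the base cases. For $i = 0$, we have $N^0 = \mathbf{k} \cdot 1$, and Proposition \ref{prop.1} \textbf{(a)} gives $\partial_g(1) = 0$, hence $\partial_g(N^0) = 0$. For $i = 1$, we have $N^1 = N$, and $\partial_g(N) = 0$ is given by hypothesis.

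For the inductive step, I would fix $I \in \mathbb{N}$ with $I \geq 1$ and assume $\partial_g(N^I) = 0$. Since $N^{I+1} = N \cdot N^I$, it is spanned by products $ab$ with $a \in N$ and $b \in N^I$. The key point is that $N$ is graded, so I may assume $a$ is homogeneous, i.e., $a \in N \cap L^{\otimes n}$ for some $n \in \mathbb{N}$. Then Proposition \ref{prop.1} \textbf{(b)} applies and gives
\[
\partial_g(ab) = \partial_g(a) \, b + (-1)^n a \, \partial_g(b).
\]
By the hypothesis $\partial_g(N) = 0$ we have $\partial_g(a) = 0$, and by the inductive hypothesis $\partial_g(b) = 0$, so $\partial_g(ab) = 0$. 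By $\mathbf{k}$-linearity of $\partial_g$, this gives $\partial_g(N^{I+1}) = 0$, completing the induction.

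Finally, since $N^\star = N^0 + N^1 + N^2 + \cdots$, we get $\partial_g(N^\star) = \sum_{i \in \mathbb{N}} \partial_g(N^i) = 0$. The only subtle point is that Proposition \ref{prop.1} \textbf{(b)} requires the \emph{left} factor to be homogeneous, which is why we need the gradedness of $N$ (to decompose the left factor $a \in N$ into homogeneous pieces); no such restriction is imposed on $b$. Other than this observation, the argument is a routine application of the superderivation property.
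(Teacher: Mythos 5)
Your proof is correct and takes essentially the same route as the paper: induct to show $\partial_g(N^i)=0$ for all $i$, then sum. The paper's version reduces to the case where \emph{both} factors $U\in N$ and $V\in N^I$ are homogeneous (using that $N^I$ is also graded), whereas you correctly observe that Proposition \ref{prop.1} \textbf{(b)} only needs the \emph{left} factor homogeneous, so gradedness of $N$ alone suffices for the decomposition step — a small but valid sharpening, though it makes no practical difference here.
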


\begin{proof}
[Proof of Lemma \ref{lem.Kert.deriv0.gen}.]First, we claim that
\begin{equation}
\partial_{g}\left(  N^{i}\right)  = 0 \label{pf.lem.Kert.deriv0.gen.i}%
\end{equation}
for all $i \in\mathbb{N}$.

\textit{Proof of (\ref{pf.lem.Kert.deriv0.gen.i}).} We shall prove
(\ref{pf.lem.Kert.deriv0.gen.i}) by induction over $i$:

\textit{Induction base:} Proposition \ref{prop.1} \textbf{(a)} yields
$\partial_{g}\left(  1\right)  =0$. Thus, $\partial_{g}\left(  N^{0}\right)  =
0$ (since the $\mathbf{k}$-module $N^{0}$ is spanned by $1$). In other words,
(\ref{pf.lem.Kert.deriv0.gen.i}) holds for $i = 0$. This completes the
induction base.

\textit{Induction step:} Let $I \in\mathbb{N}$. Assume that
(\ref{pf.lem.Kert.deriv0.gen.i}) holds for $i = I$. We now need to show that
(\ref{pf.lem.Kert.deriv0.gen.i}) holds for $i = I+1$.

In other words, we need to show that $\partial_{g}\left(  N^{I+1}\right)  =
0$. For this, it is clearly enough to prove that $\partial_{g}\left(  U
V\right)  = 0$ for all $U \in N$ and $V \in N^{I}$ (since the $\mathbf{k}%
$-module $N^{I+1} = N N^{I}$ is spanned by elements of the form $U V$ with $U
\in N$ and $V \in N^{I}$). So let $U \in N$ and $V \in N^{I}$. Since both $N$
and $N^{I}$ are graded $\mathbf{k}$-modules (because $N$ is graded), we can
WLOG assume that $U$ and $V$ are homogeneous elements of $T\left(  L\right)
$. Assume this, and let $n \in\mathbb{N}$ and $m \in\mathbb{N}$ be such that
$U \in L^{\otimes n}$ and $V \in L^{\otimes m}$. Proposition \ref{prop.1}
\textbf{(b)} (applied to $a=U$ and $b=V$) then yields $\partial_{g}\left(  UV
\right)  =\partial_{g}\left(  U\right)  V+\left(  -1\right)  ^{n}U\partial
_{g}\left(  V\right)  $. But we assumed that (\ref{pf.lem.Kert.deriv0.gen.i})
holds for $i = I$. In other words, $\partial_{g}\left(  N^{I}\right)  = 0$, so
that $\partial_{g}\left(  V\right)  = 0$ (since $V \in N^{I}$). Also,
$\partial_{g}\left(  N\right)  = 0$ and thus $\partial_{g}\left(  U\right)  =
0$ (since $U \in N$). Now, $\partial_{g}\left(  UV \right)
=\underbrace{\partial_{g}\left(  U\right)  }_{=0} V+\left(  -1\right)
^{n}U\underbrace{\partial_{g}\left(  V\right)  }_{=0}=0$, which is exactly
what we wanted to prove. Thus, the induction step is complete, and
(\ref{pf.lem.Kert.deriv0.gen.i}) is proven.

Now, the definition of $N^{\star}$ yields $N^{\star} = N^{0} + N^{1} + N^{2} +
\cdots= \sum_{i\in\mathbb{N}} N^{i}$, so that $\partial_{g}\left(  N^{\star
}\right)  = \partial_{g}\left(  \sum_{i\in\mathbb{N}} N^{i}\right)  =
\sum_{i\in\mathbb{N}} \underbrace{\partial_{g}\left(  N^{i}\right)
}_{\substack{=0 \\\text{(by (\ref{pf.lem.Kert.deriv0.gen.i}))}}} = 0$. This
proves Lemma \ref{lem.Kert.deriv0.gen}.
\end{proof}

\begin{lemma}
\label{lem.Kert.deriv0}Let $M$ be a $\mathbf{k}$-submodule of $L$. Let $g\in
L^{\ast}$ be such that $g\left(  M\right)  =0$. Then:

\textbf{(a)} We have $\partial_{g}\left(  \left(  M+\mathfrak{h}\right)
^{\star}\right)  =0$.

\textbf{(b)} Let $q\in L$ be such that $g\left(  q\right)  =1$. Let $U_{0}$
and $U_{1}$ be any two elements of $\left(  M+\mathfrak{h}\right)  ^{\star}$.
If $\partial_{g}\left(  U_{0}+U_{1}q\right)  =0$, then $U_{1}=0$.
\end{lemma}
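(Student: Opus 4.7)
The strategy is to apply Lemma~\ref{lem.Kert.deriv0.gen} to the submodule $N := M + \mathfrak{h}$. This requires two verifications. First, $N$ is graded: $M \subseteq L = L^{\otimes 1}$ sits in degree $1$, while $\mathfrak{h} = \overline{\mathfrak{g}} + P$ decomposes as $\sum_{i \geq 2} L_i + P$ with $L_i \subseteq L^{\otimes i}$ and $P \subseteq L^{\otimes 2}$, so $\mathfrak{h}$ is a sum of submodules of the graded pieces. Second, I must check $\partial_g(N) = 0$. For $m \in M \subseteq L^{\otimes 1}$, the definition of $\partial_g$ gives $\partial_g(m) = g(m) \cdot 1 = 0$ because $g(M) = 0$. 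For $\mathfrak{h}$, the proof of Proposition~\ref{prop.7} already establishes $\mathfrak{h} \subseteq \operatorname{Ker}\mathbf{t}$, so Proposition~\ref{prop.2}\textbf{(a)} yields $\partial_g(\mathfrak{h}) = 0$. Hence $\partial_g(N) = 0$, and Lemma~\ref{lem.Kert.deriv0.gen} delivers $\partial_g((M+\mathfrak{h})^\star) = 0$.

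\textbf{Plan for part (b).} Since $M + \mathfrak{h}$ is graded, so is $(M+\mathfrak{h})^\star$; thus I may decompose $U_1 = \sum_{n \in \mathbb{N}} U_{1,n}$ with $U_{1,n} \in L^{\otimes n} \cap (M+\mathfrak{h})^\star$ (all but finitely many zero). Note that $q \in L^{\otimes 1}$ gives $\partial_g(q) = g(q)\cdot 1 = 1$. Applying Proposition~\ref{prop.1}\textbf{(b)} with $a = U_{1,n}$ and $b = q$, I obtain for each $n$
\[
\partial_g(U_{1,n} q) \;=\; \partial_g(U_{1,n})\, q \;+\; (-1)^n U_{1,n}.
\]
By part (a), $\partial_g(U_0) = 0$ and $\partial_g(U_1) = 0$; the latter, combined with the grading, forces $\partial_g(U_{1,n}) = 0$ for each $n$. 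Summing the displayed identity over $n$ and substituting into the hypothesis yields
\[
0 \;=\; \partial_g(U_0 + U_1 q) \;=\; \sum_{n \in \mathbb{N}} (-1)^n U_{1,n}.
\]
Since the $U_{1,n}$ lie in distinct graded pieces $L^{\otimes n}$ of $T(L)$, each summand must vanish separately, so $U_{1,n} = 0$ for every $n$, and therefore $U_1 = 0$.

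The whole argument is mechanical once the two essentially free inputs are in hand: Lemma~\ref{lem.Kert.deriv0.gen} (which handles the subalgebra closure in part (a)) and the Leibniz rule of Proposition~\ref{prop.1}\textbf{(b)} (which separates the two contributions to $\partial_g(U_1 q)$ in part (b)). There is no serious obstacle; the only point requiring a little attention is the grading step at the end of (b), where one must observe that the sign $(-1)^n$ does not cause cancellation between different degrees because the summands live in disjoint components of $T(L)$.
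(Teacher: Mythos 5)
Your proof is correct and follows essentially the same route as the paper. Part (a) is identical. In part (b), the paper introduces the sign involution $\mathbf{s}:T(L)\to T(L)$ (acting as $(-1)^n$ on $L^{\otimes n}$), derives $\partial_g(U_1 q)=\mathbf{s}(U_1)$ from the super-Leibniz rule, and concludes $U_1=0$ because $\mathbf{s}$ is an isomorphism; you instead decompose $U_1$ into its homogeneous components $U_{1,n}$ and apply Proposition \ref{prop.1}\textbf{(b)} degree by degree, arriving at $\sum_n(-1)^n U_{1,n}=0$, which forces each $U_{1,n}=0$ because the summands sit in disjoint graded pieces. These are the same argument in two packagings: your explicit graded decomposition is precisely what makes $\mathbf{s}$ an isomorphism, and your final observation about non-cancellation across degrees is the content of that fact.
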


\begin{proof}
[Proof of Lemma \ref{lem.Kert.deriv0}.]The $\mathbf{k}$-submodules $M$ and
$\mathfrak{h}$ of $T\left(  L\right)  $ are graded (for $M$, this is because
$M\subseteq L\subseteq L^{\otimes1}$). Thus, $M+\mathfrak{h}$ is graded.

Proposition \ref{prop.7} yields $\mathfrak{h}^{\star}\subseteq
\operatorname*{Ker}\mathbf{t}$, so that $\mathfrak{h}\subseteq\mathfrak{h}%
^{\star}\subseteq\operatorname*{Ker}\mathbf{t}$ and therefore $\partial
_{g}\left(  \underbrace{\mathfrak{h}}_{\subseteq\operatorname*{Ker}\mathbf{t}%
}\right)  \subseteq\partial_{g}\left(  \operatorname*{Ker}\mathbf{t}\right)
=0$ (by Proposition \ref{prop.2} \textbf{(a)}). Thus, $\partial_{g}\left(
\mathfrak{h}\right)  =0$. Also, $\partial_{g}\left(  v\right)  =g\left(
v\right)  $ holds for every $v\in L$ (by the definition of $\partial_{g}$),
and thus, in particular, for every $v\in M$. Hence, $\partial_{g}\left(
M\right)  =g\left(  M\right)  =0$. Now,
\[
\partial_{g}\left(  M+\mathfrak{h}\right)  =\underbrace{\partial_{g}\left(
M\right)  }_{=0}+\underbrace{\partial_{g}\left(  \mathfrak{h}\right)  }%
_{=0}=0.
\]
Hence, Lemma \ref{lem.Kert.deriv0.gen} (applied to $N=M+\mathfrak{h}$) yields
$\partial_{g}\left(  \left(  M+\mathfrak{h}\right)  ^{\star}\right)  =0$. This
proves Lemma \ref{lem.Kert.deriv0} \textbf{(a)}.

\textbf{(b)} We have $\partial_{g}\left(  v\right)  =g\left(  v\right)  $ for
every $v\in L$ (by the definition of $\partial_{g}$). Applied to $v=q$, this
yields $\partial_{g}\left(  q\right)  =g\left(  q\right)  =1$.

We have $U_{0} \in\left(  M+\mathfrak{h}\right)  ^{\star}$ and thus
$\partial_{g}\left(  U_{0}\right)  = 0$ (since $\partial_{g}\left(  \left(  M
+ \mathfrak{h}\right)  ^{\star}\right)  = 0$). Similarly, $\partial_{g}\left(
U_{1}\right)  = 0$.

Let $\mathbf{s}$ be the $\mathbf{k}$-module homomorphism $T\left(  L\right)
\to T\left(  L\right)  $ which is given by
\[
\mathbf{s}\left(  U\right)  = \left(  -1\right)  ^{n} U
\ \ \ \ \ \ \ \ \ \ \text{for any } n \in\mathbb{N} \text{ and any } U \in
L^{\otimes n} .
\]
Clearly, this map $\mathbf{s}$ is an isomorphism of graded $\mathbf{k}%
$-modules. (It is actually an involution and an isomorphism of graded
$\mathbf{k}$-algebras.)

It is easy to see that any $a\in T\left(  L\right)  $ and $b\in T\left(
L\right)  $ satisfy $\partial_{g}\left(  ab\right)  =\partial_{g}\left(
a\right)  b+\mathbf{s}\left(  a\right)  \partial_{g}\left(  b\right)
$.\ \ \ \ \footnote{Indeed, it is sufficient to prove this when $a$ is
homogeneous, but in this case it follows from Proposition \ref{prop.1}
\textbf{(b)}.} Applying this to $a=U_{1}$ and $b=q$, we obtain $\partial
_{g}\left(  U_{1}q\right)  =\underbrace{\partial_{g}\left(  U_{1}\right)
}_{=0}q+\mathbf{s}\left(  U_{1}\right)  \underbrace{\partial_{g}\left(
q\right)  }_{=1}=0+\mathbf{s}\left(  U_{1}\right)  =\mathbf{s}\left(
U_{1}\right)  $.

Now, assume that $\partial_{g}\left(  U_{0}+U_{1}q\right)  =0$. Thus,
$0=\partial_{g}\left(  U_{0}+U_{1}q\right)  =\underbrace{\partial_{g}\left(
U_{0}\right)  }_{=0}+\underbrace{\partial_{g}\left(  U_{1}q\right)
}_{=\mathbf{s}\left(  U_{1}\right)  }=\mathbf{s}\left(  U_{1}\right)  $. Since
$\mathbf{s}$ is an isomorphism, this yields $0=U_{1}$. This proves Lemma
\ref{lem.Kert.deriv0} \textbf{(b)}.
\end{proof}

The following is our main result:

\begin{theorem}
\label{thm.Kert}Assume that the $\mathbf{k}$-module $L$ is free. Then,
$\mathfrak{h}^{\star}=\operatorname*{Ker}\mathbf{t}$.
\end{theorem}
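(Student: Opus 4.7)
The plan is to combine Proposition \ref{prop.7} (which supplies the easy inclusion $\mathfrak{h}^{\star} \subseteq \operatorname*{Ker}\mathbf{t}$) with the three lemmas just established, reducing the reverse inclusion to a ``peeling'' procedure for basis elements. First, since $\operatorname*{Ker}\mathbf{t}$ is graded and any $U \in \operatorname*{Ker}\mathbf{t}$ is supported on only finitely many basis elements of $L$, and since the $\mathfrak{h}^{\star}$ for a free submodule of $L$ spanned by a subset of the basis embeds into the $\mathfrak{h}^{\star}$ for $L$, I would reduce to the case where $L$ is free of finite rank $r$ with basis $e_{1}, \ldots, e_{r}$.

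Set $M_{k} = \operatorname{span}\left(e_{1}, \ldots, e_{k}\right)$, so that $M_{0} = 0$ and $M_{r} = L$. The key intermediate step is to establish, for each $k$ with $1 \leq k \leq r$, the decomposition
\[
\left(M_{k} + \mathfrak{h}\right)^{\star} = \left(M_{k-1} + \mathfrak{h}\right)^{\star} + \left(M_{k-1} + \mathfrak{h}\right)^{\star} e_{k} ,
\]
which I would get from Lemma \ref{lem.Kert.1} applied with $u = e_{k}$ and $S = M_{k-1} + \mathfrak{h}$. The hypotheses are immediate: $S$ is graded, we have $e_{k} e_{k} \in P \subseteq \mathfrak{h} \subseteq S^{\star}$, and $\left[e_{k}, S\right]_{\operatorname*{s}} \subseteq \left[L, L\right]_{\operatorname*{s}} + \left[L, \mathfrak{h}\right]_{\operatorname*{s}} \subseteq \mathfrak{h} \subseteq S^{\star}$ by Proposition \ref{prop.5} \textbf{(b)} (together with $L_{2} \subseteq \overline{\mathfrak{g}} \subseteq \mathfrak{h}$). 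The subalgebra $S^{\star}+S^{\star}e_{k}$ produced by Lemma \ref{lem.Kert.1} lies between $M_{k}+\mathfrak{h}$ and $\left(M_{k}+\mathfrak{h}\right)^{\star}$, so the two coincide. In particular, the case $k=r$ yields $T\left(L\right) = \left(M_{r-1}+\mathfrak{h}\right)^{\star} + \left(M_{r-1}+\mathfrak{h}\right)^{\star} e_{r}$.

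Given $U \in \operatorname*{Ker}\mathbf{t}$, I would then iterate downward through $k = r, r-1, \ldots, 1$: at the $k$-th step, use the decomposition above to write $U = U_{0} + U_{1} e_{k}$ with $U_{0}, U_{1} \in \left(M_{k-1} + \mathfrak{h}\right)^{\star}$, and invoke Lemma \ref{lem.Kert.deriv0} \textbf{(b)} with $M = M_{k-1}$, $q = e_{k}$, and the functional $g_{k} \in L^{\ast}$ determined by $g_{k}\left(e_{i}\right) = \delta_{i,k}$. Since $\partial_{g_{k}}\left(U\right) = 0$ by Proposition \ref{prop.2} \textbf{(a)}, the lemma forces $U_{1} = 0$, so $U \in \left(M_{k-1} + \mathfrak{h}\right)^{\star}$ and the iteration can continue with $k$ replaced by $k-1$. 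After all $r$ steps we arrive at $U \in \left(M_{0} + \mathfrak{h}\right)^{\star} = \mathfrak{h}^{\star}$, which is the desired inclusion. The only real obstacle I anticipate is the bookkeeping needed to thread the nested decompositions together so that the submodule $M$ in each application of Lemma \ref{lem.Kert.deriv0} \textbf{(b)} matches the previous level; all the substantive algebra is already packaged in Lemmas \ref{lem.Kert.1} and \ref{lem.Kert.deriv0}.
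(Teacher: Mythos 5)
Your proposal is correct and follows essentially the same route as the paper's proof: both reduce to $L$ free of finite rank, introduce the filtration $M_{k}$, apply Lemma \ref{lem.Kert.1} with $u=e_{k}$ and $S=M_{k-1}+\mathfrak{h}$ (verifying the hypotheses via $P\subseteq\mathfrak{h}$ and Proposition \ref{prop.5} \textbf{(b)}) to get the decomposition $\left(M_{k}+\mathfrak{h}\right)^{\star}\subseteq\left(M_{k-1}+\mathfrak{h}\right)^{\star}+\left(M_{k-1}+\mathfrak{h}\right)^{\star}e_{k}$, then peel off the basis vectors one at a time using Lemma \ref{lem.Kert.deriv0} \textbf{(b)} with $g=e_{k}^{\ast}$ and Proposition \ref{prop.2} \textbf{(a)}.
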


\begin{proof}
[Proof of Theorem \ref{thm.Kert}.]Proposition \ref{prop.7} shows that
$\mathfrak{h}^{\star}\subseteq\operatorname*{Ker}\mathbf{t}$. We thus only
need to verify that $\operatorname*{Ker}\mathbf{t}\subseteq\mathfrak{h}%
^{\star}$. This means proving that every $U\in\operatorname*{Ker}\mathbf{t}$
satisfies $U\in\mathfrak{h}^{\star}$. So let us fix $U\in\operatorname*{Ker}%
\mathbf{t}$.

We know that the $\mathbf{k}$-module $L$ is free; it thus has a basis. Since
the tensor $U\in T\left(  L\right)  $ can be constructed using only finitely
many elements of this basis, we can thus WLOG assume that the basis of $L$ is
finite. Let us assume this, and let us denote this basis by $\left(
e_{1},e_{2},\ldots,e_{n}\right)  $.

For every $i\in\left\{  1,2,\ldots,n\right\}  $, let $e_{i}^{\ast
}:L\rightarrow\mathbf{k}$ be the $\mathbf{k}$-linear map which sends $e_{i}$
to $1$ and sends every other $e_{j}$ to $0$.

For every $k\in\left\{  0,1,\ldots,n\right\}  $, we let $M_{k}$ denote the
$\mathbf{k}$-submodule of $L$ spanned by $e_{1},e_{2},\ldots,e_{k}$. Thus,
$M_{0}=0$ and $M_{n}=L$. Clearly, every $k\in\left\{  1,2,\ldots,n\right\}  $
satisfies%
\begin{equation}
M_{k}=M_{k-1}+\mathbf{k}e_{k}. \label{pf.thm.Kert.Mk}%
\end{equation}

For every $k\in\left\{  0,1,\ldots,n\right\}  $, we set $\mathfrak{h}%
_{k}=M_{k}+\mathfrak{h}$ and $H_{k}=\mathfrak{h}_{k}^{\star}$.

Notice that $\mathfrak{h}_{n}=M_{n}+\mathfrak{h}\supseteq M_{n}=L$ and thus
$H_{n}=\mathfrak{h}_{n}^{\star}\supseteq L^{\star}=T\left(  L\right)  $.
Hence, $H_{n}=T\left(  L\right)  $. Now, $U\in T\left(  L\right)  =H_{n}$.

On the other hand, the definition of $\mathfrak{h}_{0}$ yields $\mathfrak{h}%
_{0}=\underbrace{M_{0}}_{=0}+\mathfrak{h}=\mathfrak{h}$ and thus
$H_{0}=\mathfrak{h}_{0}^{\star}=\mathfrak{h}^{\star}$.

We shall now prove that every $k\in\left\{  1,2,\ldots,n\right\}  $ satisfies
the following implication:%
\begin{equation}
\text{if }U\in H_{k}\text{, then }U\in H_{k-1}. \label{pf.thm.Kert.decreaser}%
\end{equation}
Once this is proven, we will be able to argue that $U\in H_{n}$ (as we know),
thus $U\in H_{n-1}$ (by (\ref{pf.thm.Kert.decreaser})), thus $U\in H_{n-2}$
(by (\ref{pf.thm.Kert.decreaser}) again), and so on -- until we finally arrive
at $U\in H_{0}$. Since $H_{0}=\mathfrak{h}^{\star}$, this rewrites as
$U\in\mathfrak{h}^{\star}$, and thus we are done.

Therefore, it only remains to prove (\ref{pf.thm.Kert.decreaser}). So let us
fix $k\in\left\{  1,2,\ldots,n\right\}  $, and assume that $U\in H_{k}$. We
now need to show that $U\in H_{k-1}$.

We first notice that $\mathfrak{h}_{k-1}$ is a graded $\mathbf{k}$-submodule
of $T\left(  L\right)  $ (by its definition, since $M_{k}$ and $\mathfrak{h}$
are graded $\mathbf{k}$-submodules). Furthermore, $e_{k}e_{k}=e_{k}\otimes
e_{k}\in P$ (by the definition of $P$), and thus $e_{k}e_{k}\in P\subseteq
\mathfrak{h}\subseteq M_{k-1}+\mathfrak{h}=\mathfrak{h}_{k-1}\subseteq
\mathfrak{h}_{k-1}^{\star}$. Moreover,%
\begin{align*}
\left[  e_{k},\mathfrak{h}_{k-1}\right]  _{\operatorname*{s}}  &  =\left[
\underbrace{e_{k}}_{\in L},\underbrace{M_{k-1}}_{\subseteq L}\right]
_{\operatorname*{s}}+\left[  \underbrace{e_{k}}_{\in L},\mathfrak{h}\right]
_{\operatorname*{s}}\ \ \ \ \ \ \ \ \ \ \left(  \text{since }\mathfrak{h}%
_{k-1}=M_{k-1}+\mathfrak{h}\right) \\
&  \subseteq\underbrace{\left[  L,L\right]  _{\operatorname*{s}}%
}_{\substack{=L_{2}\subseteq L_{2}+L_{3}+L_{4}+\cdots\\=\overline
{\mathfrak{g}}}}+\underbrace{\left[  L,\mathfrak{h}\right]
_{\operatorname*{s}}}_{\substack{\subseteq\overline{\mathfrak{g}}\\\text{(by
Proposition \ref{prop.5} \textbf{(b)})}}}\subseteq\overline{\mathfrak{g}%
}+\overline{\mathfrak{g}}=\overline{\mathfrak{g}}\subseteq\mathfrak{h}\\
&  \subseteq M_{k-1}+\mathfrak{h}=\mathfrak{h}_{k-1}\subseteq\mathfrak{h}%
_{k-1}^{\star}.
\end{align*}
Thus, Lemma \ref{lem.Kert.1} (applied to $u=e_{k}$ and $S=\mathfrak{h}_{k-1}$)
yields that $\mathfrak{h}_{k-1}^{\star}+\mathfrak{h}_{k-1}^{\star}e_{k}$ is a
$\mathbf{k}$-subalgebra of $T\left(  L\right)  $. In other words,
$H_{k-1}+H_{k-1}e_{k}$ is a $\mathbf{k}$-subalgebra of $T\left(  L\right)  $
(since $H_{k-1}=\mathfrak{h}_{k-1}^{\star}$). This $\mathbf{k}$-subalgebra
contains $\mathfrak{h}_{k}$ as a subset\footnote{\textit{Proof.} We have
$\mathfrak{h}_{k}=M_{k}+\mathfrak{h}$ and similarly $\mathfrak{h}%
_{k-1}=M_{k-1}+\mathfrak{h}$. Thus,%
\begin{align*}
\mathfrak{h}_{k}  &  =\underbrace{M_{k}}_{\substack{=M_{k-1}+\mathbf{k}%
e_{k}\\\text{(by (\ref{pf.thm.Kert.Mk}))}}}+\mathfrak{h}=M_{k-1}%
+\mathbf{k}e_{k}+\mathfrak{h}=\underbrace{M_{k-1}+\mathfrak{h}}%
_{\substack{=\mathfrak{h}_{k-1}\subseteq\mathfrak{h}_{k-1}^{\star}%
=H_{k-1}\\\text{(since }H_{k-1}\text{ was}\\\text{defined as }\mathfrak{h}%
_{k-1}^{\star}\text{)}}}+\underbrace{\mathbf{k}}_{\subseteq H_{k-1}}e_{k}\\
&  \subseteq H_{k-1}+H_{k-1}e_{k},
\end{align*}
qed.}, and thus we have $H_{k}\subseteq H_{k-1}+H_{k-1}e_{k}$%
\ \ \ \ \footnote{\textit{Proof.} The $\mathbf{k}$-subalgebra $H_{k-1}%
+H_{k-1}e_{k}$ of $T\left(  L\right)  $ contains $\mathfrak{h}_{k}$ as a
subset. Hence, it also contains $\mathfrak{h}_{k}^{\star}$ as a subset (since
$\mathfrak{h}_{k}^{\star}$ is the $\mathbf{k}$-subalgebra of $T\left(
L\right)  $ generated by $\mathfrak{h}_{k}$). In other words, $H_{k-1}%
+H_{k-1}e_{k}\supseteq\mathfrak{h}_{k}^{\star}=H_{k}$, qed.}. (Actually,
$H_{k}=H_{k-1}+H_{k-1}e_{k}$, but we don't need this.)

Now, $U\in H_{k}\subseteq H_{k-1}+H_{k-1}e_{k}$. Therefore, there exist two
elements $U_{0}$ and $U_{1}$ of $H_{k-1}$ such that $U=U_{0}+U_{1}e_{k}$.
Consider these $U_{0}$ and $U_{1}$. We have%
\[
\partial_{e_{k}^{\ast}}\left(  \underbrace{U_{0}+U_{1}e_{k}}_{=U}\right)
=\partial_{e_{k}^{\ast}}\left(  \underbrace{U}_{\in\operatorname*{Ker}%
\mathbf{t}}\right)  \in\partial_{e_{k}^{\ast}}\left(  \operatorname*{Ker}%
\mathbf{t}\right)  =0
\]
(by Proposition \ref{prop.2} \textbf{(a)}, applied to $g=e_{k}^{\ast}$), so
that $\partial_{e_{k}^{\ast}}\left(  U_{0}+U_{1}e_{k}\right)  =0$.

The elements $U_{0}$ and $U_{1}$ belong to $H_{k-1}=\mathfrak{h}_{k-1}^{\star
}=\left(  M_{k-1}+\mathfrak{h}\right)  ^{\star}$ (since $\mathfrak{h}%
_{k-1}=M_{k-1}+\mathfrak{h}$). We can thus apply Lemma \ref{lem.Kert.deriv0}
\textbf{(b)} to $M=M_{k-1}$, $g=e_{k}^{\ast}$ and $q=e_{k}$ (since
$e_{k}^{\ast}\left(  M_{k-1}\right)  =0$ (because $M_{k-1}$ is spanned by
$e_{1},e_{2},\ldots,e_{k-1}$) and $e_{k}^{\ast}\left(  e_{k}\right)  =1$ and
$\partial_{e_{k}^{\ast}}\left(  U_{0}+U_{1}e_{k}\right)  =0$). As a result, we
see that $U_{1}=0$. Thus, $U=U_{0}+\underbrace{U_{1}}_{=0}e_{k}=U_{0}\in
H_{k-1}$. This completes the proof of (\ref{pf.thm.Kert.decreaser}). As we
already mentioned, this finishes the proof of Theorem \ref{thm.Kert}.
\end{proof}

Our idea to prove (\ref{pf.thm.Kert.decreaser}) goes back to Specht; it is, in
some sense, an analogue of the argument \cite[VI, Zweiter Schritt]{specht}
where he gradually moves the variables $x_{1},x_{2},\ldots,x_{n}$ (which can
be roughly seen as corresponding to our basis vectors $e_{1},e_{2}%
,\ldots,e_{n}$) inside commutators.

\begin{remark}
\label{rmk.Kert.free}We suspect that Theorem \ref{thm.Kert} still holds if we
replace the word \textquotedblleft free\textquotedblright\ by
\textquotedblleft flat\textquotedblright. (We furthermore suspect that
Lazard's theorem might help prove this.) However, Theorem \ref{thm.Kert} does
not hold if we completely remove the condition that $L$ be free. A
counterexample (one in which $L^{\otimes2}\cap\operatorname*{Ker}%
\mathbf{t}\not \subseteq \mathfrak{h}^{\star}$) can be obtained from
\cite[Example 4.6]{Lundkv08}. (Indeed, it is not hard to see that
$L^{\otimes2}\cap\operatorname*{Ker}\mathbf{t}\subseteq\mathfrak{h}^{\star}$
holds if and only if, using the notations of \cite{Lundkv08}, the canonical
map $\Gamma_{\mathbf{k}}^{2}\left(  L\right)  \rightarrow\operatorname*{TS}%
\nolimits_{\mathbf{k}}^{2}\left(  L\right)  $ is surjective. And \cite[Example
4.6]{Lundkv08} shows that the latter does not always hold.)
\end{remark}

\section{The even analogue}

The map $\mathbf{t}$ we introduced in Definition \ref{def.t} has a natural
analogue, which is obtained by removing the $\left(  -1\right)  ^{i-1}$ signs
from its definition. (One might even argue that this analogue is more natural
than $\mathbf{t}$; at any rate, it is more directly related to both the
random-to-top shuffling operator and Specht's construction.) We shall denote
this analogue by $\mathbf{t}^{\prime}$; here is its precise definition:

\begin{definition}
\label{def.t'}Let $\mathbf{t}^{\prime}:T\left(  L\right)  \rightarrow T\left(
L\right)  $ be the $\mathbf{k}$-linear map which acts on pure tensors
according to the formula%
\[
\mathbf{t}^{\prime}\left(  u_{1}\otimes u_{2}\otimes\cdots\otimes
u_{k}\right)  =\sum_{i=1}^{k}u_{i}\otimes u_{1}\otimes u_{2}\otimes
\cdots\otimes\widehat{u_{i}}\otimes\cdots\otimes u_{k}%
\]
(for all $k\in\mathbb{N}$ and $u_{1},u_{2},\ldots,u_{k}\in L$). (This is
clearly well-defined.) Thus, $\mathbf{t}^{\prime}$ is a graded $\mathbf{k}%
$-module endomorphism of $T\left(  L\right)  $.
\end{definition}

Those familiar with superalgebras will immediately notice that the maps
$\mathbf{t}$ and $\mathbf{t}^{\prime}$ can be seen as two particular cases of
one single unifying construction (a map defined on the tensor algebra of a
$\mathbf{k}$-supermodule, which, roughly speaking, differs from $\mathbf{t}$
in that the sign $\left(  -1\right)  ^{i-1}$ is replaced by $\left(
-1\right)  ^{\left(  \deg u_{1}+\deg u_{2}+\cdots+\deg u_{i-1}\right)  \left(
\deg u_{i}\right)  }$). We shall not follow this lead, but rather study the
map $\mathbf{t}^{\prime}$ separately. Unlike for the map $\mathbf{t}$, I am
not aware of a single general description of $\operatorname*{Ker}\left(
\mathbf{t}^{\prime}\right)  $ that works with no restrictions on $\mathbf{k}$
(whenever $L$ is a free $\mathbf{k}$-module). However, I can describe
$\operatorname*{Ker}\left(  \mathbf{t}^{\prime}\right)  $ when the additive
group $\mathbf{k}$ is torsionfree and when $\mathbf{k}$ is an $\mathbb{F}_{p}%
$-algebra for some prime $p$ (of course, in both cases, $L$ still has to be a
free $\mathbf{k}$-module). The answers in these two cases are different, and
there does not seem to be an obvious way to extend the argument to cases such
as $\mathbf{k}=\mathbb{Z}/6\mathbb{Z}$.

Let us first present analogues of some objects we constructed earlier in our
study of $\mathbf{t}$. First, here is an analogue of Definition
\ref{def.deltag}:

\begin{definition}
Let $L^{\ast}$ denote the dual $\mathbf{k}$-module $\operatorname*{Hom}\left(
L,\mathbf{k}\right)  $ of $L$. If $g\in L^{\ast}$, then we define a
$\mathbf{k}$-linear map $\partial_{g}^{\prime}:T\left(  L\right)  \rightarrow
T\left(  L\right)  $ by%
\[
\partial_{g}^{\prime}\left(  u_{1}\otimes u_{2}\otimes\cdots\otimes
u_{k}\right)  =\sum_{i=1}^{k}g\left(  u_{i}\right)  \cdot u_{1}\otimes
u_{2}\otimes\cdots\otimes\widehat{u_{i}}\otimes\cdots\otimes u_{k}%
\]
for all $k\in\mathbb{N}$ and $u_{1},u_{2},\ldots,u_{k}\in L$. (Again, it is
easy to check that this is well-defined.)
\end{definition}

The following proposition is an analogue of Proposition \ref{prop.1}.
(However, unlike Proposition \ref{prop.1}, it does not require $a$ to be
homogeneous, since there are no more signs that could change depending on its degree.)

\begin{proposition}
\label{prop.1'}Let $g\in L^{\ast}$.

\textbf{(a)} Then, $\partial_{g}^{\prime}\left(  1\right)  =0$.

\textbf{(b)} Also, if $a\in T\left(  L\right)  $ and $b\in T\left(  L\right)
$, then $\partial_{g}^{\prime}\left(  ab\right)  =\partial_{g}^{\prime}\left(
a\right)  b+a\partial_{g}^{\prime}\left(  b\right)  $.
\end{proposition}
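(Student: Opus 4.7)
The plan is to mirror the proof of Proposition \ref{prop.1}, only stripped of the sign bookkeeping. Part \textbf{(a)} is immediate: the unity $1$ of $T(L)$ is the empty tensor product, so the definition of $\partial_{g}^{\prime}$ presents $\partial_{g}^{\prime}(1)$ as an empty sum, hence $0$.

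For part \textbf{(b)}, I would begin by observing that the claimed equality is $\mathbf{k}$-bilinear in $a$ and $b$, so it suffices to verify it when $a$ and $b$ are pure tensors. Write $a=a_{1}\otimes a_{2}\otimes\cdots\otimes a_{n}$ and $b=b_{1}\otimes b_{2}\otimes\cdots\otimes b_{m}$ with $a_{i},b_{j}\in L$; then
\[
ab=a_{1}\otimes a_{2}\otimes\cdots\otimes a_{n}\otimes b_{1}\otimes b_{2}\otimes\cdots\otimes b_{m}.
\]
Apply the definition of $\partial_{g}^{\prime}$ to this pure tensor of length $n+m$, and split the resulting sum at $i=n$. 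The indices $i\in\{1,2,\ldots,n\}$ contribute
\[
\sum_{i=1}^{n}g(a_{i})\cdot (a_{1}\otimes\cdots\otimes\widehat{a_{i}}\otimes\cdots\otimes a_{n})\cdot b,
\]
which is exactly $\partial_{g}^{\prime}(a)\cdot b$ (using the definition of $\partial_{g}^{\prime}$ applied to $a$ and the fact that the trailing $b$-factors reassemble to $b$). The indices $i\in\{n+1,\ldots,n+m\}$ contribute, after substituting $i\mapsto i-n$,
\[
\sum_{i=1}^{m}g(b_{i})\cdot a\cdot (b_{1}\otimes\cdots\otimes\widehat{b_{i}}\otimes\cdots\otimes b_{m}),
\]
which is $a\cdot\partial_{g}^{\prime}(b)$. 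Summing the two pieces gives the desired identity.

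There is no real obstacle here, and in particular no sign to track (which is why, in contrast to Proposition \ref{prop.1}, we do not need to assume $a$ homogeneous): the absence of the factors $(-1)^{i-1}$ in the definition of $\partial_{g}^{\prime}$ means that the shift from $i$ to $i-n$ in the second sum introduces no extra sign, so the second piece lands directly as $a\,\partial_{g}^{\prime}(b)$ without a leading $(-1)^{n}$. The only mild caution is to check that the $\widehat{\cdot}$-notation behaves correctly in the factorization $ab$, i.e.\ that omitting the $i$-th tensorand of the concatenated tensor corresponds to omitting the $i$-th factor of $a$ (and leaving $b$ untouched) when $i\le n$, and to leaving $a$ untouched and omitting the $(i-n)$-th factor of $b$ when $i>n$; this is evident from the formula for $ab$ above.
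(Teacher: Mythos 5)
Your proof is correct and is precisely the sign-stripped analogue of the paper's proof of Proposition \ref{prop.1} (the paper itself omits the proof of Proposition \ref{prop.1'}, treating it as an evident variant). The one observation worth highlighting — and you make it explicitly — is that without the $(-1)^{i-1}$ factors, the reindexing $i\mapsto i-n$ in the second sum produces no sign, so no homogeneity hypothesis on $a$ is needed and bilinearity alone suffices for the reduction to pure tensors.
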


Of course, Proposition \ref{prop.1'} \textbf{(b)} says precisely that
$\partial_{g}^{\prime}$ is a derivation $T\left(  L\right)  \rightarrow
T\left(  L\right)  $.

Next comes the analogue of Proposition \ref{prop.2}:

\begin{proposition}
\label{prop.2'} \textbf{(a)} We have $\partial_{g}^{\prime}\left(
\operatorname{Ker}\left(  \mathbf{t}^{\prime}\right)  \right)  =0$ for every
$g\in L^{\ast}$.

\textbf{(b)} Assume that $L$ is a free $\mathbf{k}$-module. Then,%
\[
\operatorname*{Ker}\left(  \mathbf{t}^{\prime}\right)  =\left\{  U\in T\left(
L\right)  \ \mid\ \partial_{g}^{\prime}\left(  U\right)  =0\text{ for every
}g\in L^{\ast}\right\}  .
\]

\end{proposition}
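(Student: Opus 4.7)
The plan is to mimic the proof of Proposition \ref{prop.2} essentially verbatim, since removing the signs does not affect the underlying mechanism. The key technical device is the same auxiliary map $\mathbf{c}_g : T(L) \to T(L)$ defined by
\[
\mathbf{c}_g\left(u_1 \otimes u_2 \otimes \cdots \otimes u_k\right) = \begin{cases} 0, & \text{if } k = 0; \\ g(u_1) \cdot u_2 \otimes u_3 \otimes \cdots \otimes u_k, & \text{if } k > 0 \end{cases}
\]
for all $k \in \mathbb{N}$ and $u_1, u_2, \ldots, u_k \in L$. The crucial observation is that $\partial_g^{\prime} = \mathbf{c}_g \circ \mathbf{t}^{\prime}$ for every $g \in L^{\ast}$. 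This is checked on pure tensors: applying $\mathbf{c}_g$ to the $i$-th summand $u_i \otimes u_1 \otimes \cdots \otimes \widehat{u_i} \otimes \cdots \otimes u_k$ of $\mathbf{t}^{\prime}(u_1 \otimes \cdots \otimes u_k)$ extracts the scalar $g(u_i)$ and leaves $u_1 \otimes \cdots \otimes \widehat{u_i} \otimes \cdots \otimes u_k$, and summing over $i$ gives exactly $\partial_g^{\prime}(u_1 \otimes \cdots \otimes u_k)$. This absence of signs (versus Proposition \ref{prop.2}) is precisely the reason why the unsigned $\mathbf{c}_g$ is appropriate here.

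Part \textbf{(a)} then follows immediately: from $\partial_g^{\prime} = \mathbf{c}_g \circ \mathbf{t}^{\prime}$ we deduce $\operatorname{Ker} \mathbf{t}^{\prime} \subseteq \operatorname{Ker} \partial_g^{\prime}$, whence $\partial_g^{\prime}(\operatorname{Ker} \mathbf{t}^{\prime}) = 0$.

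For Part \textbf{(b)}, one inclusion is just Part \textbf{(a)}; the reverse inclusion requires freeness of $L$. Fix a basis $(e_i)_{i \in I}$ of $L$ and, for each $i$, let $e_i^{\ast} \in L^{\ast}$ denote the dual functional sending $e_i$ to $1$ and all other $e_j$ to $0$. Note that $\mathbf{c}_g(vU) = g(v) U$ for every $v \in L$ and $U \in T(L)$ (this is immediate from the definition of $\mathbf{c}_g$), and that $\mathbf{t}^{\prime}$ is a graded map vanishing in degree $0$, so $\mathbf{t}^{\prime}(T(L)) \subseteq \overline{T(L)} = L^{\otimes 1} \oplus L^{\otimes 2} \oplus \cdots$. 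Now, given $V$ in the right-hand set, write $\mathbf{t}^{\prime}(V) = \sum_{i \in I} e_i V_i$ with $V_i \in T(L)$ (almost all zero). Applying $\mathbf{c}_{e_j^{\ast}}$ to both sides and using $\mathbf{c}_{e_j^{\ast}}(e_i V_i) = e_j^{\ast}(e_i) V_i = \delta_{i,j} V_i$, we obtain $\mathbf{c}_{e_j^{\ast}}(\mathbf{t}^{\prime}(V)) = V_j$. But $\mathbf{c}_{e_j^{\ast}}(\mathbf{t}^{\prime}(V)) = \partial_{e_j^{\ast}}^{\prime}(V) = 0$ by hypothesis, so $V_j = 0$ for all $j$, giving $\mathbf{t}^{\prime}(V) = 0$ and thus $V \in \operatorname{Ker} \mathbf{t}^{\prime}$.

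There is no real obstacle here; the entire argument is a straightforward transcription of the proof of Proposition \ref{prop.2}, with $\mathbf{t}$ replaced by $\mathbf{t}^{\prime}$ and $\partial_g$ by $\partial_g^{\prime}$, and with the signs $(-1)^{i-1}$ uniformly set to $1$ on both sides of the identity $\partial_g^{\prime} = \mathbf{c}_g \circ \mathbf{t}^{\prime}$.
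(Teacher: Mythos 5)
Your proof is correct and is precisely the analogy the paper invokes: the paper's own proof of Proposition \ref{prop.2'} just says it is analogous to that of Proposition \ref{prop.2}, and you have carried out that analogy faithfully, with the same auxiliary map $\mathbf{c}_g$ and the key identity $\partial_g' = \mathbf{c}_g \circ \mathbf{t}'$ (the unsigned version of $\partial_g = \mathbf{c}_g \circ \mathbf{t}$).
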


\begin{proof}
[Proof of Proposition \ref{prop.2'}.]The proof of Proposition \ref{prop.2'} is
analogous to that of Proposition \ref{prop.2}.
\end{proof}

The analogue of the supercommutator $\left[  \cdot,\cdot\right]
_{\operatorname*{s}}$ is the plain commutator $\left[  \cdot,\cdot\right]  $,
which is defined by $\left[  U,V\right]  =UV-VU$ for any $U\in T\left(
L\right)  $ and $V\in T\left(  L\right)  $. Again, this analogue is less
troublesome to work with than the supercommutator $\left[  \cdot,\cdot\right]
_{\operatorname*{s}}$ because there is no dependence on the degrees of $U$ and
$V$ in its definition.

For the sake of completeness, we state an analogue of Proposition
\ref{prop.jacobi} (which is really well-known):

\begin{proposition}
\label{prop.jacobi'}Let $U\in T\left(  L\right)  $, $V\in T\left(  L\right)  $
and $W\in T\left(  L\right)  $. Then:

\textbf{(a)} We have $\left[  U,VW\right]  =\left[  U,V\right]  W+V\left[
U,W\right]  $.

\textbf{(b)} We have $\left[  U,\left[  V,W\right]  \right]  =\left[  \left[
U,V\right]  ,W\right]  +\left[  V,\left[  U,W\right]  \right]  $.
\end{proposition}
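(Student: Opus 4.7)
The plan is to prove both parts by direct expansion using associativity in $T(L)$, exactly as for the ordinary commutator in any associative $\mathbf{k}$-algebra; no $\mathbb{Z}_2$-grading is needed here and no case distinction on the degrees of $U$, $V$, $W$ arises, so the argument is simpler than in Proposition \ref{prop.jacobi}.

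For part \textbf{(a)}, I would just unfold the commutator on both sides. The left-hand side becomes
\[
[U, VW] = U(VW) - (VW)U = UVW - VWU.
\]
On the right-hand side,
\[
[U,V]W + V[U,W] = (UV-VU)W + V(UW-WU) = UVW - VUW + VUW - VWU,
\]
and the two middle terms cancel, yielding $UVW - VWU$. So both sides agree.

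For part \textbf{(b)}, the approach is the same: expand all three brackets using $[X,Y] = XY - YX$ and verify that the six resulting triple products on the right-hand side cancel in pairs to give the left-hand side. Explicitly,
\[
[U,[V,W]] = UVW - UWV - VWU + WVU,
\]
while
\[
[[U,V],W] + [V,[U,W]] = (UVW - VUW - WUV + WVU) + (VUW - VWU - UWV + WUV),
\]
and one reads off that the $VUW$, $WUV$ terms cancel between the two summands, leaving exactly $UVW - UWV - VWU + WVU$. Alternatively, one can derive \textbf{(b)} formally from \textbf{(a)} by writing $[U,[V,W]] = [U,VW] - [U,WV]$ and applying \textbf{(a)} to each term, then rearranging — this avoids the bookkeeping of six terms at the cost of one further expansion.

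There is no real obstacle: the whole proposition is a routine consequence of associativity of the multiplication in $T(L)$, and the identities in question hold in any associative $\mathbf{k}$-algebra. If desired, one can simply cite this well-known fact and omit the calculation, which is presumably why the analogous Proposition \ref{prop.jacobi} was also left to the reader.
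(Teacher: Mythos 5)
Your proof is correct. The paper states Proposition \ref{prop.jacobi'} as ``really well-known'' and supplies no proof at all, so there is nothing to compare against; your direct expansion of all commutators using associativity is exactly the routine verification the paper implicitly leaves to the reader, and both computations check out.
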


Next, we formulate an analogue to Proposition \ref{prop.3}:

\begin{proposition}
\label{prop.3'}\textbf{(a)} We have $L^{\otimes0}\subseteq\operatorname*{Ker}%
\left(  \mathbf{t}^{\prime}\right)  $.

\textbf{(b)} We have $\operatorname*{Ker}\left(  \mathbf{t}^{\prime}\right)
\cdot\operatorname*{Ker}\left(  \mathbf{t}^{\prime}\right)  \subseteq
\operatorname*{Ker}\left(  \mathbf{t}^{\prime}\right)  $.

\textbf{(c)} We have $\left[  L,L\right]  \subseteq\operatorname*{Ker}\left(
\mathbf{t}^{\prime}\right)  $.

\textbf{(d)} We have $\left[  L,\operatorname*{Ker}\left(  \mathbf{t}^{\prime
}\right)  \right]  \subseteq\operatorname*{Ker}\left(  \mathbf{t}^{\prime
}\right)  $.
\end{proposition}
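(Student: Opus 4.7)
The plan is to mirror the proof of Proposition~\ref{prop.3} step by step, with the commutator replacing the supercommutator and all sign factors removed. Part~\textbf{(a)} is immediate, since $\mathbf{t}'(1)$ is an empty sum and hence $0$.

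The workhorse for the remaining parts will be an unsigned analogue of the identity~(\ref{pf.prop.3.b.tUV}). I would introduce the same insertion map $\mathbf{i}_U : T(L) \to T(L)$ as in the proof of Proposition~\ref{prop.3}\,\textbf{(b)}, sending $v_1 \otimes v_2 \otimes \cdots \otimes v_k$ to $v_1 \cdot U \cdot (v_2 \otimes \cdots \otimes v_k)$ when $k > 0$ and to $0$ when $k = 0$. Repeating the pure-tensor computation in the footnote to~(\ref{pf.prop.3.b.tUV}) verbatim, but with every sign $(-1)^{i-1}$ and $(-1)^{i+n-1}$ replaced by $1$, will then yield
\[
\mathbf{t}'(UV) \;=\; \mathbf{t}'(U) \cdot V \;+\; \mathbf{i}_U\bigl(\mathbf{t}'(V)\bigr)
\]
for all $U, V \in T(L)$. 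Crucially, no degree-dependent sign $(-1)^n$ appears on the right-hand side, so $U$ need not be assumed homogeneous.

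Parts~\textbf{(b)}, \textbf{(c)}, \textbf{(d)} then drop out as simpler versions of their signed counterparts. For~\textbf{(b)}, if $U, V \in \operatorname{Ker}(\mathbf{t}')$, the displayed formula immediately gives $\mathbf{t}'(UV) = 0 \cdot V + \mathbf{i}_U(0) = 0$, and there is no need to invoke homogeneity of $U$. For~\textbf{(c)}, I would compute directly from the definition of $\mathbf{t}'$ that $\mathbf{t}'(xy) = x\otimes y + y\otimes x = \mathbf{t}'(yx)$ for all $x, y \in L$, which forces $\mathbf{t}'([x,y]) = 0$. For~\textbf{(d)}, given $u \in L$ and $V \in \operatorname{Ker}(\mathbf{t}')$, I would apply the displayed formula once with $(U,V) = (u, V)$ to get $\mathbf{t}'(uV) = u \cdot V + \mathbf{i}_u(0) = uV$, and once with the factors swapped to get $\mathbf{t}'(Vu) = 0 \cdot u + \mathbf{i}_V(\mathbf{t}'(u)) = \mathbf{i}_V(u) = uV$; subtracting yields $\mathbf{t}'([u,V]) = 0$.

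The only step of any real substance is the verification of the Leibniz-type identity above, which is a sign-free repeat of a calculation already carried out for $\mathbf{t}$; I foresee no genuine obstacle. It is worth flagging, however, that there is no analogue of Proposition~\ref{prop.3}\,\textbf{(e)} in the unsigned setting, because $\mathbf{t}'(xx) = 2\,xx$ rather than~$0$; this is the first indication that $\operatorname{Ker}(\mathbf{t}')$ will depend on the torsion of~$\mathbf{k}$, and foreshadows why two separate descriptions (the torsion-free case and the $\mathbb{F}_p$-algebra case) will be needed later.
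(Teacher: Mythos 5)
Your proof is correct and matches the approach the paper intends: since the paper leaves Proposition \ref{prop.3'} without an explicit proof (it is presented as the straightforward unsigned analogue of Proposition \ref{prop.3}), the right move is exactly yours — reuse the insertion map $\mathbf{i}_U$ and the Leibniz-type identity with all signs stripped out, which is what you do. Your observations that no homogeneity hypothesis on $U$ is needed once the $(-1)^n$ disappears, and that $\mathbf{t}'(xx)=2xx$ blocks any analogue of part (e), are both accurate and track the paper's own remarks.
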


Notice that Proposition \ref{prop.3'} has no part \textbf{(e)}, unlike
Proposition \ref{prop.3}. Indeed, there is no analogue to Proposition
\ref{prop.3} \textbf{(e)} for the map $\mathbf{t}^{\prime}$ in the general
case. (We will later see something that can be regarded as an analogue in the
positive-characteristic case.)

We next define an analogue to the $\mathbf{k}$-submodules $L_{1},L_{2}%
,L_{3},\ldots$:

\begin{definition}
We recursively define a sequence $\left(  L_{1}^{\prime},L_{2}^{\prime}%
,L_{3}^{\prime},\ldots\right)  $ of $\mathbf{k}$-submodules of $T\left(
L\right)  $ as follows: We set $L_{1}^{\prime}=L$, and $L_{i+1}^{\prime
}=\left[  L,L_{i}^{\prime}\right]  $ for every positive integer $i$.
\end{definition}

For instance, $L_{2}^{\prime}=\left[  L,L\right]  $ and $L_{3}^{\prime
}=\left[  L,L_{2}^{\prime}\right]  =\left[  L,\left[  L,L\right]  \right]  $.

The $\mathbf{k}$-submodule $L_{1}^{\prime}+L_{2}^{\prime}+L_{3}^{\prime
}+\cdots$ of $T\left(  L\right)  $ is a Lie subalgebra of $T\left(  L\right)
$. When $L$ is a free $\mathbf{k}$-module, this Lie subalgebra is isomorphic
to the free Lie algebra on $L$.

Of course, $L_{i}^{\prime}\subseteq L^{\otimes i}$ for every positive integer
$i$.

The analogue to $\overline{\mathfrak{g}}$ is what you would expect:

\begin{definition}
Let $\overline{\mathfrak{g}}^{\prime}$ denote the $\mathbf{k}$-submodule
$L_{2}^{\prime}+L_{3}^{\prime}+L_{4}^{\prime}+\cdots$ of $T\left(  L\right)  $.
\end{definition}

The analogue of $P$ is more interesting -- in that it is the zero module
$0\subseteq T\left(  L\right)  $. At least if we don't make any assumptions on
$\mathbf{k}$, this is the most reasonable choice we could make for the
analogue of $P$. (Later, in the positive-characteristic case, we shall
encounter a more interesting $\mathbf{k}$-submodule similar to $P$.)

The analogue of $\mathfrak{h}$, so far, has to be $\overline{\mathfrak{g}%
}^{\prime}$ (since the analogue of $P$ is $0$). There is no analogue of
Proposition \ref{prop.4}. We have an analogue of Proposition \ref{prop.5}
\textbf{(a)}, however:

\begin{proposition}
\label{prop.5a'}We have $\left[  L,\overline{\mathfrak{g}}^{\prime}\right]
\subseteq\overline{\mathfrak{g}}^{\prime}$.
\end{proposition}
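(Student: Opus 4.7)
The plan is to mirror the proof of Proposition \ref{prop.5} \textbf{(a)} essentially verbatim, with the plain commutator $[\cdot,\cdot]$ in place of the supercommutator $[\cdot,\cdot]_{\operatorname{s}}$ and the modules $L_i'$ in place of $L_i$. No sign considerations intervene, so the argument is even simpler than the original.

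First I would unwind the definition $\overline{\mathfrak{g}}' = L_2' + L_3' + L_4' + \cdots = \sum_{i \geq 2} L_i'$ and use $\mathbf{k}$-bilinearity of the commutator to write
\[
\left[L, \overline{\mathfrak{g}}'\right] = \left[L, \sum_{i \geq 2} L_i'\right] = \sum_{i \geq 2} \left[L, L_i'\right].
\]
Then I would invoke the recursive definition $L_{i+1}' = [L, L_i']$ to rewrite each summand as $L_{i+1}'$, giving
\[
\sum_{i \geq 2} \left[L, L_i'\right] = \sum_{i \geq 2} L_{i+1}' = \sum_{j \geq 3} L_j'.
\]
Finally, this is visibly contained in $\sum_{i \geq 2} L_i' = \overline{\mathfrak{g}}'$, completing the proof.

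There is no real obstacle here: the commutator version is structurally cleaner than the super version, and no analogue of the $P$-submodule enters. The entire argument is one display of subset inclusions and fits in a few lines.
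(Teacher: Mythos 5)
Your proof is correct and is precisely the argument the paper has in mind: the paper's proof of Proposition \ref{prop.5a'} simply says it is "proven in the same way as Proposition \ref{prop.5} \textbf{(a)}," and you have reproduced that argument verbatim with $[\cdot,\cdot]$ and $L_i'$ in place of $[\cdot,\cdot]_{\operatorname{s}}$ and $L_i$. Nothing to add.
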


\begin{proof}
[Proof of Proposition \ref{prop.5a'}.]This is proven in the same way as
Proposition \ref{prop.5} \textbf{(a)}.
\end{proof}

Here is an analogue of parts of Proposition \ref{prop.6}:

\begin{proposition}
\label{prop.6a'}\textbf{(a)} We have $\left[  \overline{\mathfrak{g}}^{\prime
},\overline{\mathfrak{g}}^{\prime}\right]  \subseteq\overline{\mathfrak{g}%
}^{\prime}$.

\textbf{(b)} The two $\mathbf{k}$-submodules $\overline{\mathfrak{g}}^{\prime
}$ and $\overline{\mathfrak{g}}^{\prime}+L$ of $T\left(  L\right)  $ are
invariant under the commutator $\left[  \cdot,\cdot\right]  $. (In other
words, they are Lie subalgebras of $T\left(  L\right)  $ (with the commutator
$\left[  \cdot,\cdot\right]  $ as the Lie bracket).)
\end{proposition}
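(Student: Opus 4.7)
The plan is to mirror the proof of Proposition \ref{prop.6}, but in the plain (non-super) setting, where things are a bit cleaner because there is no $P$-analogue and no sign bookkeeping. I would first establish the key containment
\[
\left[L_i^{\prime}, L_j^{\prime}\right] \subseteq L_{i+j}^{\prime} \qquad \text{for all positive integers } i,j,
\]
and then read off both parts of the proposition from it.

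To prove the containment, I would fix $i$ and induct on $j$. The base case $j=1$ is immediate from the recursive definition: $\left[L_i^{\prime}, L_1^{\prime}\right] = \left[L_i^{\prime}, L\right] = \left[L, L_i^{\prime}\right] = L_{i+1}^{\prime}$. For the induction step, assuming the claim holds for $j=J$ and all positive $i$, I would expand
\[
\left[L_i^{\prime}, L_{J+1}^{\prime}\right] = \left[L_i^{\prime}, \left[L, L_J^{\prime}\right]\right]
\]
using Proposition \ref{prop.jacobi'}\textbf{(b)} (the plain Jacobi identity), which rewrites this as a sum of $\left[\left[L_i^{\prime}, L\right], L_J^{\prime}\right] = \left[L_{i+1}^{\prime}, L_J^{\prime}\right]$ and $\left[L, \left[L_i^{\prime}, L_J^{\prime}\right]\right]$; the first is contained in $L_{(i+1)+J}^{\prime} = L_{i+(J+1)}^{\prime}$ by the inductive hypothesis applied to $i+1$, and the second, again by the inductive hypothesis, sits inside $\left[L, L_{i+J}^{\prime}\right] = L_{i+J+1}^{\prime} = L_{i+(J+1)}^{\prime}$. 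This closes the induction.

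With the containment in hand, part \textbf{(a)} falls out by bilinearity: since $\overline{\mathfrak{g}}^{\prime} = \sum_{i\geq 2} L_i^{\prime}$, we get
\[
\left[\overline{\mathfrak{g}}^{\prime}, \overline{\mathfrak{g}}^{\prime}\right] = \sum_{i\geq 2}\sum_{j\geq 2} \left[L_i^{\prime}, L_j^{\prime}\right] \subseteq \sum_{i\geq 2}\sum_{j\geq 2} L_{i+j}^{\prime} \subseteq \sum_{k\geq 4} L_k^{\prime} \subseteq \overline{\mathfrak{g}}^{\prime}.
\]
For part \textbf{(b)}, invariance of $\overline{\mathfrak{g}}^{\prime}$ is exactly \textbf{(a)}, while for $\overline{\mathfrak{g}}^{\prime}+L$ I would expand the bracket into four pieces: $\left[\overline{\mathfrak{g}}^{\prime}, \overline{\mathfrak{g}}^{\prime}\right] \subseteq \overline{\mathfrak{g}}^{\prime}$ by \textbf{(a)}; the mixed brackets $\left[L, \overline{\mathfrak{g}}^{\prime}\right]$ and $\left[\overline{\mathfrak{g}}^{\prime}, L\right]$ both sit in $\overline{\mathfrak{g}}^{\prime}$ by Proposition \ref{prop.5a'} (together with antisymmetry of $\left[\cdot,\cdot\right]$); and $\left[L, L\right] = L_2^{\prime} \subseteq \overline{\mathfrak{g}}^{\prime}$. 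Summing, $\left[\overline{\mathfrak{g}}^{\prime}+L, \overline{\mathfrak{g}}^{\prime}+L\right] \subseteq \overline{\mathfrak{g}}^{\prime} \subseteq \overline{\mathfrak{g}}^{\prime}+L$.

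I do not expect a genuine obstacle here: the proof of Proposition \ref{prop.6} is considerably more delicate precisely because of $P$ and the sign twists in the supercommutator, and those are the two ingredients that go away in the present setting. The only spot that needs mild care is the induction step for $\left[L_i^{\prime}, L_j^{\prime}\right] \subseteq L_{i+j}^{\prime}$, where one must apply the inductive hypothesis at two different values of the first index ($i$ and $i+1$); phrasing the induction as \textquotedblleft for all positive integers $i$ simultaneously\textquotedblright{} avoids any trouble.
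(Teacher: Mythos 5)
Your proposal is correct and matches the paper's intent: the paper's own proof of Proposition \ref{prop.6a'} simply says it is analogous to (the relevant parts of) the proof of Proposition \ref{prop.6} with $\overline{\mathfrak{g}}^{\prime}$ playing the roles of both $\overline{\mathfrak{g}}$ and $\mathfrak{h}$ and with $P$ replaced by $0$, and that is precisely what you have carried out, including the induction on $j$ (with $i$ universally quantified) establishing $\left[L_i^{\prime},L_j^{\prime}\right]\subseteq L_{i+j}^{\prime}$ via Proposition \ref{prop.jacobi'} \textbf{(b)}, and the four-term expansion for $\overline{\mathfrak{g}}^{\prime}+L$.
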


\begin{proof}
[Proof of Proposition \ref{prop.6a'}.]This is analogous to the relevant parts
of the proof of Proposition \ref{prop.6}. (The $\mathbf{k}$-submodule
$\overline{\mathfrak{g}}^{\prime}$ takes the roles of both $\overline
{\mathfrak{g}}$ and $\mathfrak{h}$, and the zero module $0$ takes the role of
$P$.)
\end{proof}

We can now state the analogue of Proposition \ref{prop.7}:

\begin{proposition}
\label{prop.7'}We have $\left(  \overline{\mathfrak{g}}^{\prime}\right)
^{\star}\subseteq\operatorname*{Ker}\left(  \mathbf{t}^{\prime}\right)  $.
\end{proposition}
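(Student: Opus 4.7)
The proof follows the same pattern as the proof of Proposition \ref{prop.7}, but is even simpler because there is no submodule $P$ to worry about. The plan is to first show $\overline{\mathfrak{g}}' \subseteq \operatorname{Ker}(\mathbf{t}')$, and then to invoke the fact that $\operatorname{Ker}(\mathbf{t}')$ is a $\mathbf{k}$-subalgebra of $T(L)$ to upgrade this to $\left(\overline{\mathfrak{g}}'\right)^{\star} \subseteq \operatorname{Ker}(\mathbf{t}')$.

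First, Proposition \ref{prop.3'} \textbf{(c)} gives $L_2' = [L, L] \subseteq \operatorname{Ker}(\mathbf{t}')$. I would then prove by induction on $i \geq 2$ that $L_i' \subseteq \operatorname{Ker}(\mathbf{t}')$. The base case is $i = 2$, as just noted. For the induction step, if $L_i' \subseteq \operatorname{Ker}(\mathbf{t}')$, then
\[
L_{i+1}' = [L, L_i'] \subseteq [L, \operatorname{Ker}(\mathbf{t}')] \subseteq \operatorname{Ker}(\mathbf{t}')
\]
by Proposition \ref{prop.3'} \textbf{(d)}. Summing over $i \geq 2$ yields $\overline{\mathfrak{g}}' = L_2' + L_3' + L_4' + \cdots \subseteq \operatorname{Ker}(\mathbf{t}')$.

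Next, parts \textbf{(a)} and \textbf{(b)} of Proposition \ref{prop.3'} together show that $\operatorname{Ker}(\mathbf{t}')$ contains $1$ (in fact all of $L^{\otimes 0}$) and is closed under multiplication, hence is a $\mathbf{k}$-subalgebra of $T(L)$. Since this subalgebra contains $\overline{\mathfrak{g}}'$, it must also contain the $\mathbf{k}$-subalgebra of $T(L)$ generated by $\overline{\mathfrak{g}}'$, which is precisely $\left(\overline{\mathfrak{g}}'\right)^{\star}$. This gives $\left(\overline{\mathfrak{g}}'\right)^{\star} \subseteq \operatorname{Ker}(\mathbf{t}')$, completing the proof.

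There is no real obstacle here: the entire argument is a simplified version of the proof of Proposition \ref{prop.7}, with the submodule $P$ removed from the picture (and the supercommutator replaced by the ordinary commutator throughout).
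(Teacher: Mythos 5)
Your proof is correct and matches the paper's intent exactly: the paper simply states that the proof is analogous to that of Proposition \ref{prop.7}, and your argument is precisely that analogue, with $P$ omitted and the supercommutator replaced by the commutator.
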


\begin{proof}
[Proof of Proposition \ref{prop.7'}.]Unsurprisingly, this is analogous to the
proof of Proposition \ref{prop.7}.
\end{proof}

What is not straightforward is finding the right analogue of Lemma
\ref{lem.Kert.1}. We must no longer assume $uu\in S^{\star}$ (since this won't
be satisfied in the situation we are going to apply this lemma to). Thus, a
two-term sum like the $S^{\star}+S^{\star}u$ in Lemma \ref{lem.Kert.1} won't
work anymore; instead we need an infinite sum $S^{\star}+S^{\star}u+S^{\star
}u^{2}+\cdots=\sum_{j\in\mathbb{N}}S^{\star}u^{j}$.\ \ \ \ \footnote{This is
similar to the difference between the standard basis vectors of the exterior
algebra and the symmetric algebra of a free $\mathbf{k}$-module: the former
have every $e_{i}$ appear at most once, while the latter can have it multiple
times.} Here is the exact statement:

\begin{lemma}
\label{lem.Kert'.1}Let $u\in L$. Let $S$ be a $\mathbf{k}$-submodule of
$T\left(  L\right)  $ such that $\left[  u,S\right]  \subseteq S^{\star}$.
Then, $\sum_{j\in\mathbb{N}}S^{\star}u^{j}$ is a $\mathbf{k}$-subalgebra of
$T\left(  L\right)  $.
\end{lemma}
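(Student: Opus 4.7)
The plan is to mimic the proof of Lemma \ref{lem.Kert.1}, but with $\sum_{j\in\NN} S^\star u^j$ in place of $S^\star + S^\star u$; this accounts for the fact that we no longer have $u^2$ landing in $S^\star$. Write $A = \sum_{j\in\NN} S^\star u^j$ for brevity. Since $1 \in S^\star = S^\star u^0 \subseteq A$, what remains is closure under multiplication, i.e., $A \cdot A \subseteq A$.

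First I would prove the analogue of (\ref{pf.lem.Kert.1.1}), namely $uS \subseteq S^\star + Su$. This is a one-line application of the identity $us = su + [u,s]$ for $s \in S$, using the hypothesis $[u,S] \subseteq S^\star$; no homogeneity argument is needed since the plain commutator (not supercommutator) is used. Next I would iterate, showing by induction on $i \in \NN$ that $u S^i \subseteq A$. The base case $uS^0 = \mathbf{k} u \subseteq S^\star u \subseteq A$ is trivial. For the step, write $u S^{i+1} = (uS) S^i \subseteq (S^\star + Su) S^i = S^\star S^i + S(uS^i) \subseteq S^\star + S\cdot A$ by the induction hypothesis, and observe that $S \cdot A = \sum_j S \cdot S^\star u^j \subseteq \sum_j S^\star u^j = A$, hence $uS^{i+1} \subseteq S^\star + A \subseteq A$. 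Summing over $i$ yields $u S^\star \subseteq A$.

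From $uS^\star \subseteq A$ I would deduce $uA \subseteq A$: indeed $uA = \sum_j (uS^\star) u^j \subseteq \sum_j A \cdot u^j$, and each $A \cdot u^j = \sum_k S^\star u^{k+j} \subseteq A$. Then a trivial induction on $n$ gives $u^n S^\star \subseteq A$ for every $n \in \NN$. Finally, to close $A$ under multiplication, I compute
\[
A \cdot A \;=\; \sum_{i,j} S^\star u^i \cdot S^\star u^j \;=\; \sum_{i,j} S^\star \bigl(u^i S^\star\bigr) u^j \;\subseteq\; \sum_{i,j} S^\star A \, u^j,
\]
and using $u^i S^\star \subseteq A = \sum_k S^\star u^k$ together with $S^\star S^\star \subseteq S^\star$, this lies in $\sum_{i,j,k} S^\star u^{k+j} \subseteq A$. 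Combined with $1 \in A$, this proves that $A$ is a $\mathbf{k}$-subalgebra.

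The main (only, really) obstacle is psychological rather than technical: where Lemma \ref{lem.Kert.1} could close with a two-term sum because $u^2$ was forced back into $S^\star$, here we must allow arbitrarily high powers of $u$ and verify that all of the bookkeeping still converges to a subset of $A$. Once the right object $A = \sum_j S^\star u^j$ is chosen, each step is a direct translation of the corresponding step in the proof of Lemma \ref{lem.Kert.1}, with the commutator identity replacing the supercommutator identity and with the $uu \in S^\star$ hypothesis no longer needed anywhere.
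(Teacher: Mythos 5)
Your proof is correct and follows essentially the same route as the paper: first derive $uS\subseteq S^{\star}+Su$ from the commutator identity, then push $u$-powers past $S^{\star}$ by induction so that everything lands in $A=\sum_{j\in\mathbb{N}}S^{\star}u^{j}$, and finally verify $A\cdot A\subseteq A$. The only cosmetic difference is that the paper's intermediate inclusion $u^{j}S^{\star}\subseteq\sum_{k=0}^{j}S^{\star}u^{k}$ tracks the power of $u$ more tightly than your $u^{n}S^{\star}\subseteq A$, but both serve identically in the closing computation.
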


\begin{proof}
[Proof of Lemma \ref{lem.Kert'.1}.]We have $uS^{\star}\subseteq S^{\star
}+S^{\star}u$. (This can be proven just as in the proof of Lemma
\ref{lem.Kert.1}, mutatis mutandis.) Now,%
\begin{equation}
u^{j}S^{\star}\subseteq\sum_{k=0}^{j}S^{\star}u^{k}%
\ \ \ \ \ \ \ \ \ \ \text{for every }j\in\mathbb{N} \label{pf.lem.Kert'.1.1}%
\end{equation}
\footnote{\textit{Proof of (\ref{pf.lem.Kert'.1.1}):} We shall prove
(\ref{pf.lem.Kert'.1.1}) by induction over $j$.
\par
\textit{Induction base:} When $j=0$, the relation (\ref{pf.lem.Kert'.1.1})
rewrites as $u^{0}S^{\star}\subseteq\sum_{k=0}^{0}S^{\star}u^{k}$. But this is
obvious, since $\underbrace{u^{0}}_{=1}S^{\star}=S^{\star}$ and $\sum
_{k=0}^{0}S^{\star}u^{k}=S^{\star}\underbrace{u^{0}}_{=1}=S^{\star}$. Thus,
the relation (\ref{pf.lem.Kert'.1.1}) holds for $j=0$. The induction base is
thus complete.
\par
\textit{Induction step:} Let $J\in\mathbb{N}$. Assume that
(\ref{pf.lem.Kert'.1.1}) holds for $j=J$. We now need to prove that
(\ref{pf.lem.Kert'.1.1}) holds for $j=J+1$.
\par
We know that (\ref{pf.lem.Kert'.1.1}) holds for $j=J$. In other words,
$u^{J}S^{\star}\subseteq\sum_{k=0}^{J}S^{\star}u^{k}$. Now,%
\begin{align*}
\underbrace{u^{J+1}}_{=uu^{J}}S^{\star}  &  =u\underbrace{u^{J}S^{\star}%
}_{\subseteq\sum_{k=0}^{J}S^{\star}u^{k}}\subseteq u\sum_{k=0}^{J}S^{\star
}u^{k}=\sum_{k=0}^{J}\underbrace{uS^{\star}}_{\subseteq S^{\star}+S^{\star}%
u}u^{k}\\
&  \subseteq\sum_{k=0}^{J}\underbrace{\left(  S^{\star}+S^{\star}u\right)
u^{k}}_{=S^{\star}u^{k}+S^{\star}uu^{k}}=\sum_{k=0}^{J}\left(  S^{\star}%
u^{k}+S^{\star}uu^{k}\right) \\
&  =\sum_{k=0}^{J}S^{\star}u^{k}+\sum_{k=0}^{J}S^{\star}\underbrace{uu^{k}%
}_{=u^{k+1}}=\sum_{k=0}^{J}S^{\star}u^{k}+\sum_{k=0}^{J}S^{\star}u^{k+1}\\
&  =\sum_{k=0}^{J}S^{\star}u^{k}+\sum_{k=1}^{J+1}S^{\star}u^{k}=\sum
_{k=0}^{J+1}S^{\star}u^{k}.
\end{align*}
In other words, (\ref{pf.lem.Kert'.1.1}) holds for $j=J+1$. This completes the
induction step. Thus, (\ref{pf.lem.Kert'.1.1}) is proven.}. Now,%
\begin{align*}
&  \left(  \sum_{j\in\mathbb{N}}S^{\star}u^{j}\right)  \left(  \sum
_{j\in\mathbb{N}}S^{\star}u^{j}\right) \\
&  =\left(  \sum_{j\in\mathbb{N}}S^{\star}u^{j}\right)  \left(  \sum
_{i\in\mathbb{N}}S^{\star}u^{i}\right)  =\sum_{j\in\mathbb{N}}\sum
_{i\in\mathbb{N}}S^{\star}\underbrace{u^{j}S^{\star}}_{\substack{\subseteq
\sum_{k=0}^{j}S^{\star}u^{k}\\\text{(by (\ref{pf.lem.Kert'.1.1}))}}}u^{i}\\
&  \subseteq\sum_{j\in\mathbb{N}}\sum_{i\in\mathbb{N}}S^{\star}\left(
\sum_{k=0}^{j}S^{\star}u^{k}\right)  u^{i}=\sum_{j\in\mathbb{N}}\sum
_{i\in\mathbb{N}}\sum_{k=0}^{j}\underbrace{S^{\star}S^{\star}}%
_{\substack{\subseteq S^{\star}\\\text{(since }S^{\star}\text{ is
a}\\\mathbf{k}\text{-subalgebra)}}}\underbrace{u^{k}u^{i}}_{=u^{k+i}}\\
&  \subseteq\sum_{j\in\mathbb{N}}\sum_{i\in\mathbb{N}}\sum_{k=0}^{j}S^{\star
}u^{k+i}\subseteq\sum_{\ell\in\mathbb{N}}S^{\star}u^{\ell}%
\ \ \ \ \ \ \ \ \ \ \left(  \text{since }k+i\in\mathbb{N}\text{ for all }%
i\in\mathbb{N}\text{ and }k\in\left\{  0,1,\ldots,j\right\}  \right) \\
&  =\sum_{j\in\mathbb{N}}S^{\star}u^{j}.
\end{align*}
Combined with $1\in S^{\star}=S^{\star}\underbrace{1}_{=u^{0}}=S^{\star}%
u^{0}\subseteq\sum_{j\in\mathbb{N}}S^{\star}u^{j}$, this yields that
$\sum_{j\in\mathbb{N}}S^{\star}u^{j}$ is a $\mathbf{k}$-subalgebra of
$T\left(  L\right)  $. This proves Lemma \ref{lem.Kert'.1}.
\end{proof}

Our next lemma is a straightforward analogue of Lemma
\ref{lem.Kert.deriv0.gen}:

\begin{lemma}
\label{lem.Kert'.deriv0.gen}Let $N$ be a $\mathbf{k}$-submodule of $T\left(
L\right)  $. Let $g\in L^{\ast}$ be such that $\partial_{g}^{\prime}\left(
N\right)  =0$. Then, $\partial_{g}^{\prime}\left(  N^{\star}\right)  =0$.
\end{lemma}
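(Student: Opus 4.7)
The plan is to mirror the proof of Lemma \ref{lem.Kert.deriv0.gen}, with two simplifications: (i) there is no need to assume $N$ is graded, because $\partial_g^{\prime}$ is an ordinary derivation (by Proposition \ref{prop.1'} \textbf{(b)}) rather than a superderivation, so no sign depending on degree appears; and (ii) we do not need to split elements into homogeneous components.

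First I would establish, by induction on $i$, that
\[
\partial_g^{\prime}\left(N^i\right)=0\qquad\text{for every }i\in\mathbb{N}.
\]
The base case $i=0$ follows because $N^0$ is the $\mathbf{k}$-linear span of $1\in T(L)$, and $\partial_g^{\prime}(1)=0$ by Proposition \ref{prop.1'} \textbf{(a)}. For the inductive step, assume $\partial_g^{\prime}(N^I)=0$. Since $N^{I+1}=N\cdot N^I$ is spanned by products $UV$ with $U\in N$ and $V\in N^I$, it suffices to show $\partial_g^{\prime}(UV)=0$ for any such $U,V$. But Proposition \ref{prop.1'} \textbf{(b)} gives $\partial_g^{\prime}(UV)=\partial_g^{\prime}(U)\,V+U\,\partial_g^{\prime}(V)$, and both $\partial_g^{\prime}(U)=0$ (from the hypothesis $\partial_g^{\prime}(N)=0$) and $\partial_g^{\prime}(V)=0$ (from the inductive hypothesis), so $\partial_g^{\prime}(UV)=0$, completing the induction.

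Finally, since $N^{\star}=\sum_{i\in\mathbb{N}}N^i$ by the definition of $N^{\star}$, applying the $\mathbf{k}$-linear map $\partial_g^{\prime}$ yields
\[
\partial_g^{\prime}\left(N^{\star}\right)=\sum_{i\in\mathbb{N}}\partial_g^{\prime}\left(N^i\right)=0,
\]
which is the claim.

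There is no real obstacle here: the argument is essentially the Leibniz-rule bootstrap showing that a derivation which vanishes on a generating set vanishes on the generated subalgebra. The only point worth noting is that, unlike in Lemma \ref{lem.Kert.deriv0.gen}, we need no grading hypothesis on $N$, because the Leibniz identity for $\partial_g^{\prime}$ is sign-free.
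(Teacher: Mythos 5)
Your proof is correct and takes essentially the same route the paper intends: the paper simply says "the proof is analogous to that of Lemma \ref{lem.Kert.deriv0.gen}," and you have written out precisely that induction, correctly noting the one genuine simplification (no grading hypothesis on $N$ is needed, since the Leibniz rule for $\partial_g^{\prime}$ carries no degree-dependent sign).
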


\begin{proof}
[Proof of Lemma \ref{lem.Kert'.deriv0.gen}.]The proof is analogous to that of
Lemma \ref{lem.Kert.deriv0.gen}.
\end{proof}

Next, we state an analogue of Lemma \ref{lem.Kert.deriv0}:

\begin{lemma}
\label{lem.Kert'.deriv0}Let $M$ be a $\mathbf{k}$-submodule of $L$. Let $g\in
L^{\ast}$ be such that $g\left(  M\right)  =0$. Then:

\textbf{(a)} We have $\partial_{g}^{\prime}\left(  \left(  M+\overline
{\mathfrak{g}}^{\prime}\right)  ^{\star}\right)  =0$.

\textbf{(b)} Assume that the additive group $T\left(  L\right)  $ is
torsionfree. Let $q\in L$ be such that $g\left(  q\right)  =1$. Let $\left(
U_{0},U_{1},U_{2},\ldots\right)  $ be a sequence of elements of $\left(
M+\overline{\mathfrak{g}}^{\prime}\right)  ^{\star}$ such that all but
finitely many $i\in\mathbb{N}$ satisfy $U_{i}=0$. If $\partial_{g}^{\prime
}\left(  \sum_{i\in\mathbb{N}}U_{i}q^{i}\right)  =0$, then every positive
integer $i$ satisfies $U_{i}=0$.
\end{lemma}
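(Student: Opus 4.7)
For part \textbf{(a)} the plan is to apply Lemma \ref{lem.Kert'.deriv0.gen} with $N=M+\overline{\mathfrak{g}}^{\prime}$, which requires $\partial_{g}^{\prime}\left(M+\overline{\mathfrak{g}}^{\prime}\right)=0$. This splits in two: for $v\in L$ the definition of $\partial_{g}^{\prime}$ gives $\partial_{g}^{\prime}\left(v\right)=g\left(v\right)$, so $\partial_{g}^{\prime}\left(M\right)=g\left(M\right)=0$; and Proposition \ref{prop.7'} combined with Proposition \ref{prop.2'}\textbf{(a)} gives the chain $\overline{\mathfrak{g}}^{\prime}\subseteq\left(\overline{\mathfrak{g}}^{\prime}\right)^{\star}\subseteq\operatorname*{Ker}\left(\mathbf{t}^{\prime}\right)\subseteq\operatorname*{Ker}\partial_{g}^{\prime}$, hence $\partial_{g}^{\prime}\left(\overline{\mathfrak{g}}^{\prime}\right)=0$. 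Lemma \ref{lem.Kert'.deriv0.gen} then closes \textbf{(a)}.

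For part \textbf{(b)}, the plan is to iterate $\partial_{g}^{\prime}$. By \textbf{(a)}, each $U_{i}\in\left(M+\overline{\mathfrak{g}}^{\prime}\right)^{\star}$ satisfies $\partial_{g}^{\prime}\left(U_{i}\right)=0$, hence $\left(\partial_{g}^{\prime}\right)^{j}\left(U_{i}\right)=0$ for every $j\geq1$. Since $\partial_{g}^{\prime}\left(q\right)=g\left(q\right)=1$, a routine induction on $i$ using Proposition \ref{prop.1'}\textbf{(b)} shows $\partial_{g}^{\prime}\left(q^{i}\right)=iq^{i-1}$ and, more generally,
\[
\left(\partial_{g}^{\prime}\right)^{k}\left(q^{i}\right)=\frac{i!}{\left(i-k\right)!}q^{i-k}\quad\text{for }0\leq k\leq i,\qquad\left(\partial_{g}^{\prime}\right)^{k}\left(q^{i}\right)=0\quad\text{for }k>i.
\]
The standard iterated Leibniz rule $D^{k}\left(AB\right)=\sum_{j=0}^{k}\binom{k}{j}D^{j}\left(A\right)D^{k-j}\left(B\right)$ for a derivation $D$, applied with $A=U_{i}$ and $B=q^{i}$, collapses to its $j=0$ term, giving $\left(\partial_{g}^{\prime}\right)^{k}\left(U_{i}q^{i}\right)=\frac{i!}{\left(i-k\right)!}U_{i}q^{i-k}$ for $k\leq i$ and $0$ otherwise. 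Setting $S:=\sum_{i\in\mathbb{N}}U_{i}q^{i}$ and summing over $i$ yields
\[
\left(\partial_{g}^{\prime}\right)^{k}\left(S\right)=\sum_{i\geq k}\frac{i!}{\left(i-k\right)!}U_{i}q^{i-k}\qquad\text{for every }k\in\mathbb{N}.
\]

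Finally, the hypothesis $\partial_{g}^{\prime}\left(S\right)=0$ trivially iterates to $\left(\partial_{g}^{\prime}\right)^{k}\left(S\right)=0$ for every $k\geq1$. Suppose for contradiction that some positive $i$ has $U_{i}\neq0$; only finitely many $U_{i}$ are nonzero, so let $N$ be the largest such index and note $N\geq1$. In the boxed formula for $k=N$, every summand with $i>N$ vanishes by maximality of $N$, leaving $\left(\partial_{g}^{\prime}\right)^{N}\left(S\right)=N!\,U_{N}$. Hence $N!\,U_{N}=0$ in $T\left(L\right)$, and the torsionfreeness of the additive group of $T\left(L\right)$ forces $U_{N}=0$, contradicting the choice of $N$. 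The main obstacle is the initial idea to iterate $\partial_{g}^{\prime}$ rather than apply it only once (as sufficed in Lemma \ref{lem.Kert.deriv0}\textbf{(b)}, where the polynomial in $q$ had degree at most $1$); once this is set up, the argument is a clean derivation calculation, and the torsionfree hypothesis enters only at the very end, purely to cancel the factor $N!$.
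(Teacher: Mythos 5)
Your proof is correct, but it takes a genuinely different route from the paper's. The paper proceeds by induction on the length of the support: it defines a sequence to be $N$-supported if $U_i = 0$ for all $i \geq N$, applies $\partial_g'$ \emph{once} to rewrite $0 = \partial_g'\left(\sum_i U_i q^i\right) = \sum_i (i+1)U_{i+1} q^i$, observes that the shifted sequence $\left(1U_1, 2U_2, 3U_3, \ldots\right)$ is $n$-supported when the original was $(n+1)$-supported, and invokes the induction hypothesis (then divides by $i+1$ via torsionfreeness, and finally extracts $U_1$ separately from the vanishing sum). You instead iterate the derivation: using $\partial_g'(U_i) = 0$ and the binomial Leibniz formula $D^k(AB) = \sum_{j}\binom{k}{j}D^j(A)D^{k-j}(B)$ (valid for any derivation on a noncommutative algebra by the usual induction), you compute $(\partial_g')^k(S) = \sum_{i\geq k}\frac{i!}{(i-k)!}U_i q^{i-k}$ directly, then take the largest positive index $N$ with $U_N \neq 0$ so the $k=N$ instance collapses to $N!\,U_N = 0$, whence torsionfreeness gives the contradiction. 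Both arguments rest on the same core facts (part (a) killing the $U_i$'s under $\partial_g'$, the power rule $\partial_g'(q^i)=iq^{i-1}$, and torsionfreeness to cancel integer scalars). Your version is more direct — torsionfreeness enters at exactly one point, to cancel a single factorial — and avoids the auxiliary bookkeeping of the $N$-supported recursion, at the modest cost of invoking the iterated Leibniz formula rather than a single application of the product rule per step. The paper's version is more elementary in that each step only uses the one-shot Leibniz rule, mirroring the structure of the earlier Lemma \ref{lem.Kert.deriv0} more closely.
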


\begin{proof}
[Proof of Lemma \ref{lem.Kert'.deriv0}.]The proof of Lemma
\ref{lem.Kert'.deriv0} \textbf{(a)} is analogous to that of Lemma
\ref{lem.Kert.deriv0} \textbf{(a)}. It remains to prove part \textbf{(b)}.

We assume that the additive group $T\left(  L\right)  $ is torsionfree. Let
$q\in L$ be such that $g\left(  q\right)  =1$.

In order to prepare for this proof, we shall make a definition. Given an
$N\in\mathbb{N}$, we say that the sequence $\left(  U_{0},U_{1},U_{2}%
,\ldots\right)  $ of elements of $\left(  M+\overline{\mathfrak{g}}^{\prime
}\right)  ^{\star}$ is $N$\textit{-supported} if every integer $i\geq N$
satisfies $U_{i}=0$. Of course, the sequence $\left(  U_{0},U_{1},U_{2}%
,\ldots\right)  $ of elements of $\left(  M+\overline{\mathfrak{g}}^{\prime
}\right)  ^{\star}$ must be $N$-supported for some $N\in\mathbb{N}$ (since all
but finitely many $i\in\mathbb{N}$ satisfy $U_{i}=0$). Hence, in order to
prove Lemma \ref{lem.Kert'.deriv0} \textbf{(b)}, it suffices to show that, for
every $N\in\mathbb{N}$,%
\begin{equation}
\left(  \text{Lemma \ref{lem.Kert'.deriv0} \textbf{(b)} holds whenever the
sequence }\left(  U_{0},U_{1},U_{2},\ldots\right)  \text{ is }%
N\text{-supported}\right)  . \label{pf.lem.Kert'.deriv0.b.goal}%
\end{equation}

We shall now prove (\ref{pf.lem.Kert'.deriv0.b.goal}) by induction over $N$:

\textit{Induction base:} The only $0$-supported sequence $\left(  U_{0}%
,U_{1},U_{2},\ldots\right)  $ is $\left(  0,0,0,\ldots\right)  $. Lemma
\ref{lem.Kert'.deriv0} \textbf{(b)} clearly holds for this sequence. Thus,
(\ref{pf.lem.Kert'.deriv0.b.goal}) holds for $N=0$. This completes the
induction base.

\textit{Induction step:} Fix $n\in\mathbb{N}$. Assume that
(\ref{pf.lem.Kert'.deriv0.b.goal}) is proven for $N=n$. We now need to prove
(\ref{pf.lem.Kert'.deriv0.b.goal}) for $N=n+1$.

We assumed that (\ref{pf.lem.Kert'.deriv0.b.goal}) is proven for $N=n$. In
other words,%
\begin{equation}
\left(  \text{Lemma \ref{lem.Kert'.deriv0} \textbf{(b)} holds whenever the
sequence }\left(  U_{0},U_{1},U_{2},\ldots\right)  \text{ is }%
n\text{-supported}\right)  . \label{pf.lem.Kert'.deriv0.b.indhyp}%
\end{equation}

Let $\left(  U_{0},U_{1},U_{2},\ldots\right)  $ be a sequence of elements of
$\left(  M+\overline{\mathfrak{g}}^{\prime}\right)  ^{\star}$ such that all
but finitely many $i\in\mathbb{N}$ satisfy $U_{i}=0$. Assume that this
sequence $\left(  U_{0},U_{1},U_{2},\ldots\right)  $ is $\left(  n+1\right)
$-supported. Assume that $\partial_{g}^{\prime}\left(  \sum_{i\in\mathbb{N}%
}U_{i}q^{i}\right)  =0$. Our goal now is to prove that every positive integer
$i$ satisfies $U_{i}=0$. Once this is shown, it will follow that
(\ref{pf.lem.Kert'.deriv0.b.goal}) holds for $N=n+1$, and so the induction
step will be complete.

It is easy to show (using $g\left(  q\right)  =1$) that%
\begin{equation}
\partial_{g}^{\prime}\left(  q^{i}\right)  =iq^{i-1}%
\ \ \ \ \ \ \ \ \ \ \text{for every }i\in\mathbb{N}
\label{pf.lem.Kert'.deriv0.b.pi}%
\end{equation}
(where $iq^{i-1}$ is to be understood as $0$ when $i=0$). Hence, every
$i\in\mathbb{N}$ satisfies
\begin{align*}
\partial_{g}^{\prime}\left(  U_{i}q^{i}\right)   &  =\underbrace{\partial
_{g}^{\prime}\left(  U_{i}\right)  }_{\substack{=0\\\text{(since }U_{i}%
\in\left(  M+\overline{\mathfrak{g}}^{\prime}\right)  ^{\star}\\\text{and
}\partial_{g}^{\prime}\left(  \left(  M+\overline{\mathfrak{g}}^{\prime
}\right)  ^{\star}\right)  =0\text{)}}}q^{i}+U_{i}\partial_{g}^{\prime}\left(
q^{i}\right)  \ \ \ \ \ \ \ \ \ \ \left(  \text{by Proposition \ref{prop.1'}
\textbf{(b)}}\right) \\
&  =U_{i}\underbrace{\partial_{g}^{\prime}\left(  q^{i}\right)  }_{=iq^{i-1}%
}=U_{i}\cdot iq^{i-1}.
\end{align*}
Now,%
\begin{align*}
\partial_{g}^{\prime}\left(  \sum_{i\in\mathbb{N}}U_{i}q^{i}\right)   &
=\sum_{i\in\mathbb{N}}\underbrace{\partial_{g}^{\prime}\left(  U_{i}%
q^{i}\right)  }_{=U_{i}\cdot iq^{i-1}}=\sum_{i\in\mathbb{N}}U_{i}\cdot
iq^{i-1}=U_{0}\cdot\underbrace{0q^{0-1}}_{\substack{=0\\\text{(by
definition)}}}+\sum_{\substack{i\in\mathbb{N};\\i\text{ is positive}}%
}U_{i}\cdot iq^{i-1}\\
&  =\sum_{\substack{i\in\mathbb{N};\\i\text{ is positive}}}U_{i}\cdot
iq^{i-1}=\sum_{i\in\mathbb{N}}U_{i+1}\cdot\left(  i+1\right)  q^{i}=\sum
_{i\in\mathbb{N}}\left(  i+1\right)  U_{i+1}q^{i}.
\end{align*}
Hence, $\sum_{i\in\mathbb{N}}\left(  i+1\right)  U_{i+1}q^{i}=\partial
_{g}^{\prime}\left(  \sum_{i\in\mathbb{N}}U_{i}q^{i}\right)  =0$, so that
$\partial_{g}^{\prime}\left(  \underbrace{\sum_{i\in\mathbb{N}}\left(
i+1\right)  U_{i+1}q^{i}}_{=0}\right)  =0$.

But $\left(  1U_{1},2U_{2},3U_{3},\ldots\right)  $ is a sequence of elements
of $\left(  M+\overline{\mathfrak{g}}^{\prime}\right)  ^{\star}$ (because
$U_{1},U_{2},U_{3},\ldots$ are elements of $\left(  M+\overline{\mathfrak{g}%
}^{\prime}\right)  ^{\star}$), and is $n$-supported (since the sequence
$\left(  U_{0},U_{1},U_{2},\ldots\right)  $ is $\left(  n+1\right)
$-supported). Hence, we can apply Lemma \ref{lem.Kert'.deriv0} \textbf{(b)} to
$\left(  1U_{1},2U_{2},3U_{3},\ldots\right)  $ instead of $\left(  U_{0}%
,U_{1},U_{2},\ldots\right)  $ (because of (\ref{pf.lem.Kert'.deriv0.b.indhyp}%
)). As a result, we conclude that every positive integer $i$ satisfies
$\left(  i+1\right)  U_{i+1}=0$. Therefore, every positive integer $i$
satisfies $U_{i+1}=0$ (since the additive group $T\left(  L\right)  $ is
torsionfree). In other words, every integer $i>2$ satisfies $U_{i}=0$.

But now, recall that $\sum_{i\in\mathbb{N}}\left(  i+1\right)  U_{i+1}q^{i}%
=0$. Hence,%
\begin{align*}
0  &  =\sum_{i\in\mathbb{N}}\left(  i+1\right)  U_{i+1}q^{i}%
=\underbrace{\left(  0+1\right)  }_{=1}\underbrace{U_{0+1}}_{=U_{1}%
}\underbrace{q^{0}}_{=1}+\sum_{\substack{i\in\mathbb{N};\\i\text{ is
positive}}}\left(  i+1\right)  \underbrace{U_{i+1}}%
_{\substack{=0\\\text{(since }i\text{ is}\\\text{positive)}}}q^{i}\\
&  =U_{1}+\underbrace{\sum_{\substack{i\in\mathbb{N};\\i\text{ is positive}%
}}\left(  i+1\right)  0q^{i}}_{=0}=U_{1}.
\end{align*}
Hence, $U_{1}=0$. This (combined with the fact that every integer $i>2$
satisfies $U_{i}=0$) shows that every positive integer $i$ satisfies $U_{i}%
=0$. Thus, (\ref{pf.lem.Kert'.deriv0.b.goal}) is proven for $N=n+1$. The
induction step will be complete.

We have now proven (\ref{pf.lem.Kert'.deriv0.b.goal}) by induction. Hence,
Lemma \ref{lem.Kert'.deriv0} \textbf{(b)} is proven.
\end{proof}

We can finally state our characteristic-zero analogue of Theorem
\ref{thm.Kert}:

\begin{theorem}
\label{thm.Kert'}Assume that the $\mathbf{k}$-module $L$ is free. Assume that
the additive group $\mathbf{k}$ is torsionfree. Then, $\left(  \overline
{\mathfrak{g}}^{\prime}\right)  ^{\star}=\operatorname*{Ker}\left(
\mathbf{t}^{\prime}\right)  $.
\end{theorem}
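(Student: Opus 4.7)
The plan is to mirror the proof of Theorem \ref{thm.Kert}, using Proposition \ref{prop.7'} for one inclusion and the analogues Lemmas \ref{lem.Kert'.1} and \ref{lem.Kert'.deriv0} in place of Lemmas \ref{lem.Kert.1} and \ref{lem.Kert.deriv0}. The inclusion $\left(\overline{\mathfrak{g}}^{\prime}\right)^{\star} \subseteq \operatorname{Ker}\left(\mathbf{t}^{\prime}\right)$ is Proposition \ref{prop.7'}, so I will fix an arbitrary $U \in \operatorname{Ker}\left(\mathbf{t}^{\prime}\right)$ and prove $U \in \left(\overline{\mathfrak{g}}^{\prime}\right)^{\star}$. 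Since $U$ involves only finitely many basis vectors of $L$, I can reduce to the case of a finite basis $\left(e_{1}, e_{2}, \ldots, e_{n}\right)$ with dual functionals $e_{i}^{\ast} \in L^{\ast}$. Let $M_{k}$ denote the $\mathbf{k}$-submodule of $L$ spanned by $e_{1}, \ldots, e_{k}$; set $\mathfrak{h}_{k}^{\prime} = M_{k} + \overline{\mathfrak{g}}^{\prime}$ and $H_{k}^{\prime} = \left(\mathfrak{h}_{k}^{\prime}\right)^{\star}$. Then $H_{n}^{\prime} = T(L)$ (so $U \in H_{n}^{\prime}$) while $H_{0}^{\prime} = \left(\overline{\mathfrak{g}}^{\prime}\right)^{\star}$, so it will suffice to establish the descent step ``if $U \in H_{k}^{\prime}$, then $U \in H_{k-1}^{\prime}$'' for each $k \in \{1, \ldots, n\}$.

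For the descent step, I would apply Lemma \ref{lem.Kert'.1} with $u = e_{k}$ and $S = \mathfrak{h}_{k-1}^{\prime}$. Its hypothesis follows from
\[
\left[e_{k}, \mathfrak{h}_{k-1}^{\prime}\right] \subseteq [L, L] + \left[L, \overline{\mathfrak{g}}^{\prime}\right] \subseteq \overline{\mathfrak{g}}^{\prime} \subseteq H_{k-1}^{\prime},
\]
using $[L, L] = L_{2}^{\prime} \subseteq \overline{\mathfrak{g}}^{\prime}$ and Proposition \ref{prop.5a'}. Lemma \ref{lem.Kert'.1} then says that $\sum_{j \in \mathbb{N}} H_{k-1}^{\prime} e_{k}^{j}$ is a $\mathbf{k}$-subalgebra of $T(L)$; it contains $\mathfrak{h}_{k}^{\prime} = M_{k-1} + \mathbf{k} e_{k} + \overline{\mathfrak{g}}^{\prime}$, and hence it contains all of $H_{k}^{\prime} = \left(\mathfrak{h}_{k}^{\prime}\right)^{\star}$. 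I can therefore write $U = \sum_{i \in \mathbb{N}} U_{i} e_{k}^{i}$ with each $U_{i} \in H_{k-1}^{\prime} = \left(M_{k-1} + \overline{\mathfrak{g}}^{\prime}\right)^{\star}$ and all but finitely many $U_{i}$ zero.

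To finish, Proposition \ref{prop.2'}(a) gives $\partial_{e_{k}^{\ast}}^{\prime}(U) = 0$, and I invoke Lemma \ref{lem.Kert'.deriv0}(b) with $M = M_{k-1}$, $g = e_{k}^{\ast}$, and $q = e_{k}$ (noting $e_{k}^{\ast}(M_{k-1}) = 0$ and $e_{k}^{\ast}(e_{k}) = 1$) to conclude that $U_{i} = 0$ for every positive $i$, leaving $U = U_{0} \in H_{k-1}^{\prime}$. Lemma \ref{lem.Kert'.deriv0}(b) requires the additive group of $T(L)$ to be torsionfree; this holds because $L$ is free over $\mathbf{k}$ and $\mathbf{k}$ is torsionfree, so $T(L)$ is a free $\mathbf{k}$-module, in particular torsionfree as an abelian group.

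The only genuine departure from the signed case is in the expansion of $U$: with no analogue of Proposition \ref{prop.3}(e) available, the element $e_{k} e_{k}$ need not lie in $H_{k-1}^{\prime}$, so the two-term sum $H_{k-1}^{\prime} + H_{k-1}^{\prime} e_{k}$ of the signed proof must be replaced by the infinite sum $\sum_{j \in \mathbb{N}} H_{k-1}^{\prime} e_{k}^{j}$. This is precisely why Lemmas \ref{lem.Kert'.1} and \ref{lem.Kert'.deriv0}(b) are formulated with arbitrary powers of $u$ and $q$, and the torsionfreeness hypothesis enters only at the final step to cancel the integer factors $(i+1)$ produced by differentiation; I do not anticipate any serious obstacle beyond this bookkeeping.
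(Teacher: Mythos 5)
Your proposal follows the paper's proof step for step: the same reduction to a finite basis, the same filtration $H_{k}^{\prime}=\left(M_{k}+\overline{\mathfrak{g}}^{\prime}\right)^{\star}$, the same descent argument via Lemma \ref{lem.Kert'.1} and Lemma \ref{lem.Kert'.deriv0}(b), and the same observation that freeness of $L$ plus torsionfreeness of $\mathbf{k}$ makes $T(L)$ torsionfree. The argument is correct and essentially identical to the one in the paper (up to minor notational variance such as priming $\mathfrak{h}_{k}$ and $H_{k}$).
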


The proof of this theorem is similar to that of Theorem \ref{thm.Kert}, but
differs just enough that we show it in detail.

We notice that the requirement in Theorem \ref{thm.Kert'} that the additive
group $\mathbf{k}$ is torsionfree is satisfied whenever $\mathbf{k}$ is a
commutative $\mathbb{Q}$-algebra, but also in cases such as $\mathbf{k}%
=\mathbb{Z}$. So Theorem \ref{thm.Kert'} is actually a fairly general result.

\begin{proof}
[Proof of Theorem \ref{thm.Kert'}.]Proposition \ref{prop.7'} shows that
$\left(  \overline{\mathfrak{g}}^{\prime}\right)  ^{\star}\subseteq
\operatorname*{Ker}\left(  \mathbf{t}^{\prime}\right)  $. We thus only need to
verify that $\operatorname*{Ker}\left(  \mathbf{t}^{\prime}\right)
\subseteq\left(  \overline{\mathfrak{g}}^{\prime}\right)  ^{\star}$. This
means proving that every $U\in\operatorname*{Ker}\left(  \mathbf{t}^{\prime
}\right)  $ satisfies $U\in\left(  \overline{\mathfrak{g}}^{\prime}\right)
^{\star}$. So let us fix $U\in\operatorname*{Ker}\left(  \mathbf{t}^{\prime
}\right)  $.

The $\mathbf{k}$-module $L$ is free. Hence, the $\mathbf{k}$-module $T\left(
L\right)  $ is free as well. Therefore, the additive group $T\left(  L\right)
$ is a direct sum of many copies of the additive group $\mathbf{k}$. Thus, the
additive group $T\left(  L\right)  $ is torsionfree (because the additive
group $\mathbf{k}$ is torsionfree).

We know that the $\mathbf{k}$-module $L$ is free; it thus has a basis. Since
the tensor $U\in T\left(  L\right)  $ can be constructed using only finitely
many elements of this basis, we can thus WLOG assume that the basis of $L$ is
finite. Let us assume this, and let us denote this basis by $\left(
e_{1},e_{2},\ldots,e_{n}\right)  $.

For every $i\in\left\{  1,2,\ldots,n\right\}  $, let $e_{i}^{\ast
}:L\rightarrow\mathbf{k}$ be the $\mathbf{k}$-linear map which sends $e_{i}$
to $1$ and sends every other $e_{j}$ to $0$.

For every $k\in\left\{  0,1,\ldots,n\right\}  $, we let $M_{k}$ denote the
$\mathbf{k}$-submodule of $L$ spanned by $e_{1},e_{2},\ldots,e_{k}$. Thus,
$M_{0}=0$ and $M_{n}=L$. Clearly, every $k\in\left\{  1,2,\ldots,n\right\}  $
satisfies%
\begin{equation}
M_{k}=M_{k-1}+\mathbf{k}e_{k}. \label{pf.thm.Kert'.Mk}%
\end{equation}

For every $k\in\left\{  0,1,\ldots,n\right\}  $, we set $\mathfrak{h}%
_{k}=M_{k}+\overline{\mathfrak{g}}^{\prime}$ and $H_{k}=\mathfrak{h}%
_{k}^{\star}$.

Notice that $\mathfrak{h}_{n}=M_{n}+\overline{\mathfrak{g}}^{\prime}\supseteq
M_{n}=L$ and thus $H_{n}=\mathfrak{h}_{n}^{\star}\supseteq L^{\star}=T\left(
L\right)  $. Hence, $H_{n}=T\left(  L\right)  $. Now, $U\in T\left(  L\right)
=H_{n}$.

On the other hand, the definition of $\mathfrak{h}_{0}$ yields $\mathfrak{h}%
_{0}=\underbrace{M_{0}}_{=0}+\overline{\mathfrak{g}}^{\prime}=\overline
{\mathfrak{g}}^{\prime}$ and thus $H_{0}=\mathfrak{h}_{0}^{\star}=\left(
\overline{\mathfrak{g}}^{\prime}\right)  ^{\star}$.

We shall now prove that every $k\in\left\{  1,2,\ldots,n\right\}  $ satisfies
the following implication:%
\begin{equation}
\text{if }U\in H_{k}\text{, then }U\in H_{k-1}. \label{pf.thm.Kert'.decreaser}%
\end{equation}
Once this is proven, we will be able to argue that $U\in H_{n}$ (as we know),
thus $U\in H_{n-1}$ (by (\ref{pf.thm.Kert'.decreaser})), thus $U\in H_{n-2}$
(by (\ref{pf.thm.Kert'.decreaser}) again), and so on -- until we finally
arrive at $U\in H_{0}$. Since $H_{0}=\left(  \overline{\mathfrak{g}}^{\prime
}\right)  ^{\star}$, this rewrites as $U\in\left(  \overline{\mathfrak{g}%
}^{\prime}\right)  ^{\star}$, and thus we are done.

Therefore, it only remains to prove (\ref{pf.thm.Kert'.decreaser}). So let us
fix $k\in\left\{  1,2,\ldots,n\right\}  $, and assume that $U\in H_{k}$. We
now need to show that $U\in H_{k-1}$.

We have%
\begin{align*}
\left[  e_{k},\mathfrak{h}_{k-1}\right]   &  =\left[  \underbrace{e_{k}}_{\in
L},\underbrace{M_{k-1}}_{\subseteq L}\right]  +\left[  \underbrace{e_{k}}_{\in
L},\overline{\mathfrak{g}}^{\prime}\right]  \ \ \ \ \ \ \ \ \ \ \left(
\text{since }\mathfrak{h}_{k-1}=M_{k-1}+\overline{\mathfrak{g}}^{\prime
}\right) \\
&  \subseteq\underbrace{\left[  L,L\right]  }_{\substack{=L_{2}^{\prime
}\subseteq L_{2}^{\prime}+L_{3}^{\prime}+L_{4}^{\prime}+\cdots\\=\overline
{\mathfrak{g}}^{\prime}}}+\underbrace{\left[  L,\overline{\mathfrak{g}%
}^{\prime}\right]  }_{\substack{\subseteq\overline{\mathfrak{g}}^{\prime
}\\\text{(by Proposition \ref{prop.5a'})}}}\subseteq\overline{\mathfrak{g}%
}^{\prime}+\overline{\mathfrak{g}}^{\prime}=\overline{\mathfrak{g}}^{\prime}\\
&  \subseteq M_{k-1}+\overline{\mathfrak{g}}^{\prime}=\mathfrak{h}%
_{k-1}\subseteq\mathfrak{h}_{k-1}^{\star}.
\end{align*}
Thus, Lemma \ref{lem.Kert'.1} (applied to $u=e_{k}$ and $S=\mathfrak{h}_{k-1}%
$) yields that $\sum_{j\in\mathbb{N}}\mathfrak{h}_{k-1}^{\star}e_{k}^{j}$ is a
$\mathbf{k}$-subalgebra of $T\left(  L\right)  $. In other words, $\sum
_{j\in\mathbb{N}}H_{k-1}e_{k}^{j}$ is a $\mathbf{k}$-subalgebra of $T\left(
L\right)  $ (since $H_{k-1}=\mathfrak{h}_{k-1}^{\star}$). This $\mathbf{k}%
$-subalgebra contains $\mathfrak{h}_{k}$ as a subset\footnote{\textit{Proof.}
We have $\mathfrak{h}_{k}=M_{k}+\overline{\mathfrak{g}}^{\prime}$ and
similarly $\mathfrak{h}_{k-1}=M_{k-1}+\overline{\mathfrak{g}}^{\prime}$. Thus,%
\begin{align*}
\mathfrak{h}_{k}  &  =\underbrace{M_{k}}_{\substack{=M_{k-1}+\mathbf{k}%
e_{k}\\\text{(by (\ref{pf.thm.Kert'.Mk}))}}}+\overline{\mathfrak{g}}^{\prime
}=M_{k-1}+\mathbf{k}e_{k}+\overline{\mathfrak{g}}^{\prime}=\underbrace{M_{k-1}%
+\overline{\mathfrak{g}}^{\prime}}_{\substack{=\mathfrak{h}_{k-1}%
\subseteq\mathfrak{h}_{k-1}^{\star}=H_{k-1}\\\text{(since }H_{k-1}\text{
was}\\\text{defined as }\mathfrak{h}_{k-1}^{\star}\text{)}}%
}+\underbrace{\mathbf{k}}_{\subseteq H_{k-1}}e_{k}\\
&  \subseteq H_{k-1}+H_{k-1}e_{k}\subseteq\sum_{j\in\mathbb{N}}H_{k-1}%
e_{k}^{j}%
\end{align*}
(since $H_{k-1}$ and $H_{k-1}e_{k}$ are the first two addends of the sum
$\sum_{j\in\mathbb{N}}H_{k-1}e_{k}^{j}$), qed.}, and thus we have
$H_{k}\subseteq\sum_{j\in\mathbb{N}}H_{k-1}e_{k}^{j}$%
\ \ \ \ \footnote{\textit{Proof.} The $\mathbf{k}$-subalgebra $\sum
_{j\in\mathbb{N}}H_{k-1}e_{k}^{j}$ of $T\left(  L\right)  $ contains
$\mathfrak{h}_{k}$ as a subset. Hence, it also contains $\mathfrak{h}%
_{k}^{\star}$ as a subset (since $\mathfrak{h}_{k}^{\star}$ is the
$\mathbf{k}$-subalgebra of $T\left(  L\right)  $ generated by $\mathfrak{h}%
_{k}$). In other words, $\sum_{j\in\mathbb{N}}H_{k-1}e_{k}^{j}\supseteq
\mathfrak{h}_{k}^{\star}=H_{k}$, qed.}. (Actually, $H_{k}=\sum_{j\in
\mathbb{N}}H_{k-1}e_{k}^{j}$, but we don't need this.)

Now, $U\in H_{k}\subseteq\sum_{j\in\mathbb{N}}H_{k-1}e_{k}^{j}=\sum
_{i\in\mathbb{N}}H_{k-1}e_{k}^{i}$. Therefore, there exists a sequence
$\left(  U_{0},U_{1},U_{2},\ldots\right)  $ of elements of $H_{k-1}$ such that
all but finitely many $i\in\mathbb{N}$ satisfy $U_{i}=0$ and such that we have
$U=\sum_{i\in\mathbb{N}}U_{i}e_{k}^{i}$. Consider this sequence $\left(
U_{0},U_{1},U_{2},\ldots\right)  $. We have%
\[
\partial_{e_{k}^{\ast}}^{\prime}\left(  \underbrace{\sum_{i\in\mathbb{N}}%
U_{i}e_{k}^{i}}_{=U}\right)  =\partial_{e_{k}^{\ast}}^{\prime}\left(
\underbrace{U}_{\in\operatorname*{Ker}\left(  \mathbf{t}^{\prime}\right)
}\right)  \in\partial_{e_{k}^{\ast}}^{\prime}\left(  \operatorname*{Ker}%
\left(  \mathbf{t}^{\prime}\right)  \right)  =0
\]
(by Proposition \ref{prop.2'} \textbf{(a)}, applied to $g=e_{k}^{\ast}$), so
that $\partial_{e_{k}^{\ast}}^{\prime}\left(  \sum_{i\in\mathbb{N}}U_{i}%
e_{k}^{i}\right)  =0$.

The entries $U_{0},U_{1},U_{2},\ldots$ of the sequence $\left(  U_{0}%
,U_{1},U_{2},\ldots\right)  $ belong to $H_{k-1}=\mathfrak{h}_{k-1}^{\star
}=\left(  M_{k-1}+\overline{\mathfrak{g}}^{\prime}\right)  ^{\star}$ (since
$\mathfrak{h}_{k-1}=M_{k-1}+\overline{\mathfrak{g}}^{\prime}$). We can thus
apply Lemma \ref{lem.Kert'.deriv0} \textbf{(b)} to $M=M_{k-1}$, $g=e_{k}%
^{\ast}$ and $q=e_{k}$ (since $e_{k}^{\ast}\left(  M_{k-1}\right)  =0$
(because $M_{k-1}$ is spanned by $e_{1},e_{2},\ldots,e_{k-1}$) and
$e_{k}^{\ast}\left(  e_{k}\right)  =1$ and $\partial_{e_{k}^{\ast}}^{\prime
}\left(  \sum_{i\in\mathbb{N}}U_{i}e_{k}^{i}\right)  =0$). As a result, we see
that every positive integer $i$ satisfies $U_{i}=0$. Thus,%
\[
U=\sum_{i\in\mathbb{N}}U_{i}e_{k}^{i}=U_{0}\underbrace{e_{k}^{0}}_{=1}%
+\sum_{\substack{i\in\mathbb{N};\\i\text{ is positive}}}\underbrace{U_{i}%
}_{\substack{=0\\\text{(since }i\text{ is}\\\text{positive)}}}e_{k}^{i}%
=U_{0}+\underbrace{\sum_{\substack{i\in\mathbb{N};\\i\text{ is positive}%
}}0e_{k}^{i}}_{=0}=U_{0}\in H_{k-1}.
\]
This completes the proof of (\ref{pf.thm.Kert'.decreaser}). As we already
mentioned, this finishes the proof of Theorem \ref{thm.Kert'}.
\end{proof}

\section{The even analogue in positive characteristic}

We now come to the question of determining $\operatorname*{Ker}\left(
\mathbf{t}^{\prime}\right)  $ when the ground ring $\mathbf{k}$ is an
$\mathbb{F}_{p}$-algebra for a prime number $p$. In this case, as we will see,
a $\mathbf{k}$-submodule similar to the $P$ of Definition \ref{def.P} will
become relevant once again.

\begin{condition}
For this whole section, we fix a prime number $p$, and we assume that
$\mathbf{k}$ is a commutative $\mathbb{F}_{p}$-algebra.
\end{condition}

This assumption yields that $p=0$ in $\mathbf{k}$. Let us immediately put this
to use by stating an analogue of Proposition \ref{prop.3} \textbf{(e)} (which,
as we recall, had no analogue in the case of arbitrary $\mathbf{k}$):

\begin{proposition}
\label{prop.3''}We have $x^{p}\in\operatorname*{Ker}\left(  \mathbf{t}%
^{\prime}\right)  $ for each $x\in L$.
\end{proposition}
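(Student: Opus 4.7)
The plan is to compute $\mathbf{t}'(x^p)$ directly from the definition and observe that everything collapses because all $p$ tensorands are equal.

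First I would unfold $x^p$ in $T(L)$ as the pure tensor $x \otimes x \otimes \cdots \otimes x \in L^{\otimes p}$ with $p$ factors, all equal to $x$. Applying the formula in Definition \ref{def.t'} to this pure tensor, the $i$-th summand (for $i \in \{1,2,\ldots,p\}$) is obtained by pulling the $i$-th tensorand out to the front and deleting it from its original slot; since every tensorand is $x$, each such summand equals
\[
x \otimes \underbrace{x \otimes x \otimes \cdots \otimes x}_{p-1\text{ factors}} = x^p.
\]
Summing over $i = 1, 2, \ldots, p$ therefore gives $\mathbf{t}'(x^p) = p \cdot x^p$.

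Finally, because $\mathbf{k}$ is a commutative $\mathbb{F}_p$-algebra, the element $p \in \mathbf{k}$ is zero, so $p \cdot x^p = 0$ in $T(L)$. Hence $\mathbf{t}'(x^p) = 0$, i.e., $x^p \in \operatorname{Ker}(\mathbf{t}')$, as claimed. There is no real obstacle here: the entire argument is the observation that the ``sign-less'' shuffle collapses to multiplication by $p$ on $x^p$, and $p$ vanishes in characteristic $p$. (By contrast, the analogous identity for $\mathbf{t}$ would give $\mathbf{t}(x^p) = \bigl(\sum_{i=1}^p (-1)^{i-1}\bigr) x^p$, which behaves differently and explains why Proposition \ref{prop.3} \textbf{(e)} took the form $xx \in \operatorname{Ker}\mathbf{t}$ instead.)
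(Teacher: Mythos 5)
Your argument is exactly the one the paper gives: compute $\mathbf{t}'(x^p) = p\,x^p$ from the definition (every tensorand being $x$ makes each of the $p$ summands equal $x^p$), then use $p = 0$ in $\mathbf{k}$. The paper phrases this as the observation $\mathbf{t}'(x^i) = i\,x^i$ specialized to $i = p$, which is the same computation.
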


\begin{proof}
[Proof of Proposition \ref{prop.3''}.]Let $x\in L$. It is easy to see that
$\mathbf{t}^{\prime}\left(  x^{i}\right)  =ix^{i}$ for every positive integer
$i$. Applying this to $i=p$, we obtain $\mathbf{t}^{\prime}\left(
x^{p}\right)  =px^{p}=0$ (since $p=0$ in $\mathbf{k}$). Thus, $x^{p}%
\in\operatorname*{Ker}\left(  \mathbf{t}^{\prime}\right)  $. This proves
Proposition \ref{prop.3''}.
\end{proof}

\begin{definition}
\label{def.Pp}Let $P_{p}$ denote the $\mathbf{k}$-submodule of $L^{\otimes p}$
spanned by elements of the form $\underbrace{x\otimes x\otimes\cdots\otimes
x}_{p\text{ times}}$ with $x\in L$. Notice that $\underbrace{x\otimes
x\otimes\cdots\otimes x}_{p\text{ times}}=x^{p}$ in the $\mathbf{k}$-algebra
$T\left(  L\right)  $ for every $x\in L$.
\end{definition}

Of course, when $p=2$, we have $P_{p}=P_{2}=P$.

\begin{definition}
Let $\mathfrak{h}_{p}^{\prime}=\overline{\mathfrak{g}}^{\prime}+P_{p}$.
\end{definition}

Recall that Proposition \ref{prop.5a'} was an analogue of part of Proposition
\ref{prop.5}. Now that we have $\mathbf{k}$-submodules $P_{p}$ and
$\mathfrak{h}_{p}^{\prime}$ similar to our formerly defined $P$ and
$\mathfrak{h}$, we can state an analogue of the remainder of that proposition:

\begin{proposition}
\label{prop.5b'}We have $\left[  L,P_{p}\right]  \subseteq\overline
{\mathfrak{g}}^{\prime}$ and $\left[  L,\mathfrak{h}_{p}^{\prime}\right]
\subseteq\overline{\mathfrak{g}}^{\prime}\subseteq\mathfrak{h}_{p}^{\prime}$.
\end{proposition}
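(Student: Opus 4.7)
The plan is to reduce everything to showing that $[u, x^p] \in \overline{\mathfrak{g}}'$ for all $u, x \in L$. Once this is done, since $P_p$ is $\mathbf{k}$-spanned by elements of the form $x^p$ with $x \in L$ and the commutator is $\mathbf{k}$-bilinear, we immediately get $[L, P_p] \subseteq \overline{\mathfrak{g}}'$, and then
\[
[L, \mathfrak{h}_p'] = [L, \overline{\mathfrak{g}}'] + [L, P_p] \subseteq \overline{\mathfrak{g}}' + \overline{\mathfrak{g}}' = \overline{\mathfrak{g}}'
\]
by Proposition \ref{prop.5a'}, while $\overline{\mathfrak{g}}' \subseteq \mathfrak{h}_p'$ is immediate from the definition $\mathfrak{h}_p' = \overline{\mathfrak{g}}' + P_p$.

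The key input is the classical positive-characteristic identity
\[
[x^p, u] = \underbrace{[x, [x, \ldots, [x}_{p \text{ copies of } x}, u] \cdots ]]
\]
for $u, x \in L$, that is, $[x^p, u] = (\operatorname{ad} x)^p(u)$. I would prove this by working in the endomorphism algebra of $T(L)$: left multiplication $\mathbf{L}_x$ and right multiplication $\mathbf{R}_x$ commute, so the binomial theorem gives $(\operatorname{ad} x)^p = (\mathbf{L}_x - \mathbf{R}_x)^p = \sum_{k=0}^{p} \binom{p}{k} (-1)^k \mathbf{L}_x^{p-k} \mathbf{R}_x^k$. Since $\binom{p}{k} = 0$ in $\mathbf{k}$ for $0 < k < p$, only the $k = 0$ and $k = p$ terms survive, yielding $\mathbf{L}_x^p + (-1)^p \mathbf{R}_x^p = \mathbf{L}_{x^p} + (-1)^p \mathbf{R}_{x^p}$ as operators. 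For odd $p$ this is exactly $\operatorname{ad}(x^p) = \mathbf{L}_{x^p} - \mathbf{R}_{x^p}$; for $p = 2$, it equals $\mathbf{L}_{x^2} + \mathbf{R}_{x^2}$, which coincides with $\mathbf{L}_{x^2} - \mathbf{R}_{x^2} = \operatorname{ad}(x^2)$ because $2 = 0$ in $\mathbf{k}$. Evaluating both sides at $u$ gives the identity.

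With the identity in hand, I claim that $(\operatorname{ad} x)^p(u) \in L_{p+1}'$: starting from $u \in L = L_1'$ and using that $[L, L_i'] = L_{i+1}'$ (the recursive definition of the $L_i'$), each successive application of $[x, -]$ with $x \in L$ sends $L_i'$ into $L_{i+1}'$, so after $p$ iterations we sit inside $L_{p+1}'$. Since $p \geq 2$, we have $p+1 \geq 3$, hence $L_{p+1}' \subseteq L_2' + L_3' + \cdots = \overline{\mathfrak{g}}'$, and therefore $[u, x^p] = -(\operatorname{ad} x)^p(u) \in \overline{\mathfrak{g}}'$, which is what we wanted. The only nontrivial step is the characteristic-$p$ identity above; the rest is routine bookkeeping with the recursive definition of the $L_i'$.
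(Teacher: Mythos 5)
Your proof is correct and follows essentially the same route as the paper's: both establish the key identity $(\operatorname{ad}_x)^p = \operatorname{ad}_{x^p}$ via the binomial theorem on the commuting operators $\mathbf{L}_x$ and $\mathbf{R}_x$, then iterate the bracket to land in $\overline{\mathfrak{g}}'$, and finally assemble $[L,\mathfrak{h}_p'] \subseteq \overline{\mathfrak{g}}'$ from Proposition \ref{prop.5a'}. The only cosmetic difference is that you track the iterated brackets precisely into $L_{p+1}'$, whereas the paper passes through $\overline{\mathfrak{g}}'$ after the first step; both are fine.
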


\begin{proof}
[Proof of Proposition \ref{prop.5b'}.]Let us first make some general
observations on commutators in $\mathbf{k}$-algebras.

Let $A$ be any $\mathbf{k}$-algebra. Clearly, there is a commutator $\left[
\cdot,\cdot\right]  $ on $A$, defined by $\left[  U,V\right]  =UV-VU$ for all
$U\in A$ and $V\in A$.

For every $a\in A$, we define a $\mathbf{k}$-linear map $\operatorname*{ad}%
\nolimits_{a}:A\rightarrow A$ by setting%
\[
\operatorname*{ad}\nolimits_{a}\left(  c\right)  =\left[  a,c\right]
\ \ \ \ \ \ \ \ \ \ \text{for all }c\in A.
\]
Then, every $a\in A$ satisfies%
\begin{equation}
\operatorname*{ad}\nolimits_{a^{p}}=\left(  \operatorname*{ad}\nolimits_{a}%
\right)  ^{p} \label{pf.prop.5b'.adp}%
\end{equation}
\footnote{\textit{Proof of (\ref{pf.prop.5b'.adp}):} We first notice that
$\left(  -1\right)  ^{p}=-1$ in $\mathbf{k}$. (This is because Fermat's little
theorem yields $\left(  -1\right)  ^{p}\equiv-1\operatorname{mod}p$.)
\par
For every $a\in A$, we define a $\mathbf{k}$-linear map $\mathcal{L}%
_{a}:A\rightarrow A$ by setting%
\[
\mathcal{L}_{a}\left(  c\right)  =ac\ \ \ \ \ \ \ \ \ \ \text{for all }c\in
A.
\]
For every $a\in A$, we define a $\mathbf{k}$-linear map $\mathcal{R}%
_{a}:A\rightarrow A$ by setting%
\[
\mathcal{R}_{a}\left(  c\right)  =ca\ \ \ \ \ \ \ \ \ \ \text{for all }c\in
A.
\]
\par
It is easy to see that $\mathcal{L}_{0}=0$ and $\mathcal{L}_{1}%
=\operatorname*{id}$, and that any $a\in A$ and $b\in A$ satisfy
$\mathcal{L}_{ab}=\mathcal{L}_{a}\circ\mathcal{L}_{b}$ and $\mathcal{L}%
_{a+b}=\mathcal{L}_{a}+\mathcal{L}_{b}$. Hence, the map%
\[
A\rightarrow\operatorname*{End}A,\ \ \ \ \ \ \ \ \ \ a\mapsto\mathcal{L}_{a}%
\]
is a $\mathbf{k}$-algebra homomorphism. Consequently, $\mathcal{L}_{a^{p}%
}=\left(  \mathcal{L}_{a}\right)  ^{p}$ for every $a\in A$. Similarly,
$\mathcal{R}_{a^{p}}=\left(  \mathcal{R}_{a}\right)  ^{p}$ for every $a\in A$.
\par
Now, fix $a\in A$. Then, the maps $\mathcal{L}_{a}$ and $\mathcal{R}_{a}$
commute. (In fact, more generally, the maps $\mathcal{L}_{a}$ and
$\mathcal{R}_{b}$ commute for every $b\in A$. This is straightforward to
check.) Moreover, $\operatorname*{ad}\nolimits_{a}=\mathcal{L}_{a}%
-\mathcal{R}_{a}$ (since every $c\in A$ satisfies $\operatorname*{ad}%
\nolimits_{a}\left(  c\right)  =\left[  a,c\right]  =\underbrace{ac}%
_{=\mathcal{L}_{a}\left(  c\right)  }-\underbrace{ca}_{=\mathcal{R}_{a}\left(
c\right)  }=\mathcal{L}_{a}\left(  c\right)  -\mathcal{R}_{a}\left(  c\right)
=\left(  \mathcal{L}_{a}-\mathcal{R}_{a}\right)  \left(  c\right)  $). The
same argument (applied to $a^{p}$ instead of $a$) shows that
$\operatorname*{ad}\nolimits_{a^{p}}=\mathcal{L}_{a^{p}}-\mathcal{R}_{a^{p}}$.
Now, the maps $\mathcal{L}_{a}$ and $\mathcal{R}_{a}$ commute; thus, we can
apply the binomial formula to them. We thus obtain%
\begin{align*}
&  \left(  \mathcal{L}_{a}-\mathcal{R}_{a}\right)  ^{p}\\
&  =\sum_{k=0}^{p}\dbinom{p}{k}\left(  \mathcal{L}_{a}\right)  ^{k}\left(
-\mathcal{R}_{a}\right)  ^{p-k}\\
&  =\underbrace{\dbinom{p}{0}}_{=1}\underbrace{\left(  \mathcal{L}_{a}\right)
^{0}}_{=1}\underbrace{\left(  -\mathcal{R}_{a}\right)  ^{p-0}}_{=\left(
-\mathcal{R}_{a}\right)  ^{p}}+\sum_{k=1}^{p-1}\underbrace{\dbinom{p}{k}%
}_{\substack{=0\text{ in }\mathbf{k}\\\text{(since }p\mid\dbinom{p}%
{k}\\\text{(because }p\text{ is prime and }0<k<p\text{))}}}\left(
\mathcal{L}_{a}\right)  ^{k}\left(  -\mathcal{R}_{a}\right)  ^{p-k}%
+\underbrace{\dbinom{p}{p}}_{=1}\left(  \mathcal{L}_{a}\right)  ^{p}%
\underbrace{\left(  -\mathcal{R}_{a}\right)  ^{p-p}}_{=\left(  -\mathcal{R}%
_{a}\right)  ^{0}=1}\\
&  =\underbrace{\left(  -\mathcal{R}_{a}\right)  ^{p}}_{\substack{=\left(
-1\right)  ^{p}\left(  \mathcal{R}_{a}\right)  ^{p}=-\left(  \mathcal{R}%
_{a}\right)  ^{p}\\\text{(since }\left(  -1\right)  ^{p}=-1\text{ in
}\mathbf{k}\text{)}}}+\underbrace{\sum_{k=1}^{p-1}0\left(  \mathcal{L}%
_{a}\right)  ^{k}\left(  -\mathcal{R}_{a}\right)  ^{p-k}}_{=0}+\left(
\mathcal{L}_{a}\right)  ^{p}\\
&  =-\underbrace{\left(  \mathcal{R}_{a}\right)  ^{p}}_{=\mathcal{R}_{a^{p}}%
}+\underbrace{\left(  \mathcal{L}_{a}\right)  ^{p}}_{=\mathcal{L}_{a^{p}}%
}=-\mathcal{R}_{a^{p}}+\mathcal{L}_{a^{p}}=\mathcal{L}_{a^{p}}-\mathcal{R}%
_{a^{p}}=\operatorname*{ad}\nolimits_{a^{p}}.
\end{align*}
Since $\operatorname*{ad}\nolimits_{a}=\mathcal{L}_{a}-\mathcal{R}_{a}$, this
rewrites as $\left(  \operatorname*{ad}\nolimits_{a}\right)  ^{p}%
=\operatorname*{ad}\nolimits_{a^{p}}$. This proves (\ref{pf.prop.5b'.adp}).}.

Now, let $A=T\left(  L\right)  $. Fix $x\in L$. Then,%
\begin{align*}
\operatorname*{ad}\nolimits_{x}\left(  \overline{\mathfrak{g}}^{\prime
}\right)   &  =\left[  \underbrace{x}_{\in L},\overline{\mathfrak{g}}^{\prime
}\right]  \ \ \ \ \ \ \ \ \ \ \left(  \text{since }\operatorname*{ad}%
\nolimits_{x}\left(  c\right)  =\left[  x,c\right]  \text{ for all }%
c\in\overline{\mathfrak{g}}^{\prime}\right)  \\
&  \subseteq\left[  L,\overline{\mathfrak{g}}^{\prime}\right]  \subseteq
\overline{\mathfrak{g}}^{\prime}\ \ \ \ \ \ \ \ \ \ \left(  \text{by
Proposition \ref{prop.5a'}}\right)  .
\end{align*}
Thus, the map $\operatorname*{ad}\nolimits_{x}:T\left(  L\right)  \rightarrow
T\left(  L\right)  $ restricts to a map $\operatorname*{ad}\nolimits_{x}%
:\overline{\mathfrak{g}}^{\prime}\rightarrow\overline{\mathfrak{g}}^{\prime}$.
Consequently,
\begin{equation}
\left(  \operatorname*{ad}\nolimits_{x}\right)  ^{n}\left(  \overline
{\mathfrak{g}}^{\prime}\right)  \subseteq\overline{\mathfrak{g}}^{\prime
}\ \ \ \ \ \ \ \ \ \ \text{for every }n\in\mathbb{N}.\label{pf.prop.5b'.sub}%
\end{equation}

Now, if $m$ is a positive integer, then%
\begin{align}
\underbrace{\left(  \operatorname*{ad}\nolimits_{x}\right)  ^{m}}_{=\left(
\operatorname*{ad}\nolimits_{x}\right)  ^{m-1}\circ\operatorname*{ad}%
\nolimits_{x}}\left(  L\right)    & =\left(  \left(  \operatorname*{ad}%
\nolimits_{x}\right)  ^{m-1}\circ\operatorname*{ad}\nolimits_{x}\right)
\left(  L\right)  =\left(  \operatorname*{ad}\nolimits_{x}\right)
^{m-1}\underbrace{\left(  \operatorname*{ad}\nolimits_{x}\left(  L\right)
\right)  }_{=\left[  x,L\right]  }\nonumber\\
& =\left(  \operatorname*{ad}\nolimits_{x}\right)  ^{m-1}\left(  \left[
\underbrace{x}_{\in L},L\right]  \right)  \in\left(  \operatorname*{ad}%
\nolimits_{x}\right)  ^{m-1}\left(  \underbrace{\left[  L,L\right]
}_{\substack{=L_{2}^{\prime}\\\subseteq L_{2}^{\prime}+L_{3}^{\prime}%
+L_{4}^{\prime}+\cdots\\=\overline{\mathfrak{g}}^{\prime}}}\right)
\nonumber\\
& \in\left(  \operatorname*{ad}\nolimits_{x}\right)  ^{m-1}\left(
\overline{\mathfrak{g}}^{\prime}\right)  \subseteq\overline{\mathfrak{g}%
}^{\prime}\ \ \ \ \ \ \ \ \ \ \left(  \text{by (\ref{pf.prop.5b'.sub}),
applied to }n=m-1\right)  .\label{pf.prop.5b'.sub2}%
\end{align}

Hence, every $U\in L$ satisfies%
\[
\left[  U,x^{p}\right]  =-\underbrace{\left[  x^{p},U\right]  }%
_{=\operatorname*{ad}\nolimits_{x^{p}}\left(  U\right)  }%
=-\underbrace{\operatorname*{ad}\nolimits_{x^{p}}}_{\substack{=\left(
\operatorname*{ad}\nolimits_{x}\right)  ^{p}\\\text{(by (\ref{pf.prop.5b'.adp}%
))}}}\left(  \underbrace{U}_{\in L}\right)  \in-\underbrace{\left(
\operatorname*{ad}\nolimits_{x}\right)  ^{p}\left(  L\right)  }%
_{\substack{\subseteq\overline{\mathfrak{g}}^{\prime}\\\text{(by
(\ref{pf.prop.5b'.sub2}))}}}\subseteq-\overline{\mathfrak{g}}^{\prime
}\subseteq\overline{\mathfrak{g}}^{\prime}.
\]

Let us now forget that we fixed $x$ and $U$. We thus have shown that $\left[
U,x^{p}\right]  \in\overline{\mathfrak{g}}^{\prime}$ for every $U\in L$ and
every $x\in L$. This shows that $\left[  L,P_{p}\right]  \subseteq
\overline{\mathfrak{g}}^{\prime}$ (because the $\mathbf{k}$-module $P_{p}$ is
spanned by elements of the form $x^{p}$ with $x\in L$).

It remains to prove that $\left[  L,\mathfrak{h}_{p}^{\prime}\right]
\subseteq\overline{\mathfrak{g}}^{\prime}\subseteq\mathfrak{h}_{p}^{\prime}$.
But this is easy: We have $\mathfrak{h}_{p}^{\prime}=\overline{\mathfrak{g}%
}^{\prime}+P_{p}$, and thus%
\[
\left[  L,\mathfrak{h}_{p}^{\prime}\right]  =\underbrace{\left[
L,\overline{\mathfrak{g}}^{\prime}\right]  }_{\substack{\subseteq
\overline{\mathfrak{g}}^{\prime}\\\text{(by Proposition \ref{prop.5a'})}%
}}+\underbrace{\left[  L,P_{p}\right]  }_{\subseteq\overline{\mathfrak{g}%
}^{\prime}}\subseteq\overline{\mathfrak{g}}^{\prime}+\overline{\mathfrak{g}%
}^{\prime}\subseteq\overline{\mathfrak{g}}^{\prime}\subseteq\overline
{\mathfrak{g}}^{\prime}+P_{p}=\mathfrak{h}_{p}^{\prime}.
\]
This proves Proposition \ref{prop.5b'}.
\end{proof}

Next, we state an analogue of the parts of Proposition \ref{prop.6} not
covered by Proposition \ref{prop.6a'}:

\begin{proposition}
\label{prop.6b'}\textbf{(a)} We have $\left[  \mathfrak{h}_{p}^{\prime
},\mathfrak{h}_{p}^{\prime}\right]  \subseteq\overline{\mathfrak{g}}^{\prime
}\subseteq\mathfrak{h}_{p}^{\prime}$.

\textbf{(b)} The two $\mathbf{k}$-submodules $\mathfrak{h}_{p}^{\prime}$ and
$\mathfrak{h}_{p}^{\prime}+L$ of $T\left(  L\right)  $ are invariant under the
commutator $\left[  \cdot,\cdot\right]  $. (In other words, they are Lie
subalgebras of $T\left(  L\right)  $ (with the commutator $\left[  \cdot
,\cdot\right]  $ as the Lie bracket).)
\end{proposition}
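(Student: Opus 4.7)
The plan is to imitate the proof of Proposition \ref{prop.6}, with the identity $(\operatorname*{ad}\nolimits_{x})^{p}=\operatorname*{ad}\nolimits_{x^{p}}$ from (\ref{pf.prop.5b'.adp}) playing the role formerly played by the identity $[U,xx]_{\operatorname*{s}}=[[U,x]_{\operatorname*{s}},x]_{\operatorname*{s}}$ of (\ref{pf.prop.5.b.Uxx}). The generators $x^{p}$ of $P_{p}$ take the role of the generators $xx$ of $P$, and the key observation is that a single application of $\operatorname*{ad}\nolimits_{x}$ already lands $y^{p}$ inside $\overline{\mathfrak{g}}^{\prime}$ (by Proposition \ref{prop.5b'}), after which the remaining $p-1$ applications of $\operatorname*{ad}\nolimits_{x}$ keep us there by Proposition \ref{prop.5a'}.

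For part \textbf{(a)}, since $\mathfrak{h}_{p}^{\prime}=\overline{\mathfrak{g}}^{\prime}+P_{p}$, the containment $[\mathfrak{h}_{p}^{\prime},\mathfrak{h}_{p}^{\prime}]\subseteq\overline{\mathfrak{g}}^{\prime}$ would follow from the four sub-inclusions $[\overline{\mathfrak{g}}^{\prime},\overline{\mathfrak{g}}^{\prime}]\subseteq\overline{\mathfrak{g}}^{\prime}$, $[\overline{\mathfrak{g}}^{\prime},P_{p}]\subseteq\overline{\mathfrak{g}}^{\prime}$, $[P_{p},\overline{\mathfrak{g}}^{\prime}]\subseteq\overline{\mathfrak{g}}^{\prime}$ and $[P_{p},P_{p}]\subseteq\overline{\mathfrak{g}}^{\prime}$. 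The first is Proposition \ref{prop.6a'} \textbf{(a)}. For the remaining three, the common tool is the identity $[V,x^{p}]=-\operatorname*{ad}\nolimits_{x^{p}}(V)=-(\operatorname*{ad}\nolimits_{x})^{p}(V)$, valid for any $V\in T(L)$ and $x\in L$. If $V\in\overline{\mathfrak{g}}^{\prime}$, then iterating $\operatorname*{ad}\nolimits_{x}(\overline{\mathfrak{g}}^{\prime})\subseteq\overline{\mathfrak{g}}^{\prime}$ (Proposition \ref{prop.5a'}) a total of $p$ times yields $(\operatorname*{ad}\nolimits_{x})^{p}(V)\in\overline{\mathfrak{g}}^{\prime}$, which settles the second and third sub-inclusions (the third following from the first via antisymmetry of $[\cdot,\cdot]$). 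If $V=y^{p}\in P_{p}$, then Proposition \ref{prop.5b'} gives $\operatorname*{ad}\nolimits_{x}(V)=[x,y^{p}]\in\overline{\mathfrak{g}}^{\prime}$, and the remaining $p-1$ applications of $\operatorname*{ad}\nolimits_{x}$ stay in $\overline{\mathfrak{g}}^{\prime}$ by Proposition \ref{prop.5a'}; this disposes of the fourth.

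For part \textbf{(b)}, the inclusion $[\mathfrak{h}_{p}^{\prime},\mathfrak{h}_{p}^{\prime}]\subseteq\mathfrak{h}_{p}^{\prime}$ follows at once from part \textbf{(a)} combined with $\overline{\mathfrak{g}}^{\prime}\subseteq\mathfrak{h}_{p}^{\prime}$. For $\mathfrak{h}_{p}^{\prime}+L$, I would expand
\begin{align*}
[\mathfrak{h}_{p}^{\prime}+L,\mathfrak{h}_{p}^{\prime}+L]\subseteq[\mathfrak{h}_{p}^{\prime},\mathfrak{h}_{p}^{\prime}]+[\mathfrak{h}_{p}^{\prime},L]+[L,\mathfrak{h}_{p}^{\prime}]+[L,L]
\end{align*}
and use Proposition \ref{prop.5b'} to bound the two mixed terms $[\mathfrak{h}_{p}^{\prime},L]=[L,\mathfrak{h}_{p}^{\prime}]$ by $\overline{\mathfrak{g}}^{\prime}$, together with $[L,L]=L_{2}^{\prime}\subseteq\overline{\mathfrak{g}}^{\prime}$ and part \textbf{(a)}; this places the whole sum inside $\overline{\mathfrak{g}}^{\prime}\subseteq\mathfrak{h}_{p}^{\prime}\subseteq\mathfrak{h}_{p}^{\prime}+L$. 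I do not anticipate a serious obstacle here: the entire argument reduces to the single observation that one application of $\operatorname*{ad}\nolimits_{x}$ suffices to push any generator of $P_{p}$ into $\overline{\mathfrak{g}}^{\prime}$, after which Proposition \ref{prop.5a'} absorbs all further brackets with $L$.
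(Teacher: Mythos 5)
Your proof is correct and rests on the same key tool as the paper's, namely the identity $\operatorname{ad}_{x^p}=\left(\operatorname{ad}_{x}\right)^{p}$ from (\ref{pf.prop.5b'.adp}) combined with the facts that $\operatorname{ad}_{x}$ preserves $\overline{\mathfrak{g}}^{\prime}$ (Proposition \ref{prop.5a'}) and sends $P_{p}$ into $\overline{\mathfrak{g}}^{\prime}$ (Proposition \ref{prop.5b'}); part \textbf{(b)} is handled identically in both. The only difference is organizational: you split $\left[\mathfrak{h}_{p}^{\prime},\mathfrak{h}_{p}^{\prime}\right]$ directly into four pieces and cite Proposition \ref{prop.6a'}~\textbf{(a)} for $\left[\overline{\mathfrak{g}}^{\prime},\overline{\mathfrak{g}}^{\prime}\right]$, whereas the paper reaches the same conclusion by first proving $\left[\mathfrak{h}_{p}^{\prime},\sum_{i\geq1}L_{i}^{\prime}\right]\subseteq\overline{\mathfrak{g}}^{\prime}$ (re-deriving $\left[L_{i}^{\prime},L_{j}^{\prime}\right]\subseteq L_{i+j}^{\prime}$ along the way); also, in your parenthetical you presumably meant the third sub-inclusion follows from the second, not the first, by antisymmetry, but this is only a slip of wording.
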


\begin{proof}
[Proof of Proposition \ref{prop.6b'}.]\textbf{(a)} First, we notice that%
\begin{equation}
\left[  P_{p},\sum_{i\geq1}L_{i}^{\prime}\right]  \subseteq\overline
{\mathfrak{g}}^{\prime}. \label{pf.prop.6b'.a.PLi}%
\end{equation}
\footnote{\textit{Proof of (\ref{pf.prop.6b'.a.PLi}):} It is clearly enough to
show that $\left[  P_{p},L_{i}^{\prime}\right]  \subseteq\overline
{\mathfrak{g}}^{\prime}$ for all positive integers $i$. So let us do this. Let
$i$ be a positive integer. We need to show that $\left[  P_{p},L_{i}^{\prime
}\right]  \subseteq\overline{\mathfrak{g}}^{\prime}$. In other words, we need
to show that $\left[  x^{p},L_{i}^{\prime}\right]  \subseteq\overline
{\mathfrak{g}}^{\prime}$ for every $x\in L$ (because the $\mathbf{k}$-module
$P_{p}$ is spanned by elements of the form $x^{p}$ with $x\in L$). So let us
fix $x\in L$. We need to prove $\left[  x^{p},L_{i}^{\prime}\right]
\subseteq\overline{\mathfrak{g}}^{\prime}$.
\par
We shall use the notations introduced in the proof of Proposition
\ref{prop.5b'}. Let $U\in L_{i}^{\prime}$. Then, the definition of
$\operatorname*{ad}\nolimits_{x}$ yields%
\begin{align*}
\operatorname*{ad}\nolimits_{x}\left(  U\right)   &  =\left[  \underbrace{x}%
_{\in L},\underbrace{U}_{\in L_{i}^{\prime}}\right]  \in\left[  L,L_{i}%
^{\prime}\right]  =L_{i+1}^{\prime}\subseteq L_{2}^{\prime}+L_{3}^{\prime
}+L_{4}^{\prime}+\cdots\ \ \ \ \ \ \ \ \ \ \left(  \text{since }%
i+1\geq2\right) \\
&  =\overline{\mathfrak{g}}^{\prime}.
\end{align*}
But the definition of $\operatorname*{ad}\nolimits_{x^{p}}$ yields
$\operatorname*{ad}\nolimits_{x^{p}}\left(  U\right)  =\left[  x^{p},U\right]
$, so that%
\begin{align*}
\left[  x^{p},U\right]   &  =\underbrace{\operatorname*{ad}\nolimits_{x^{p}}%
}_{\substack{=\left(  \operatorname*{ad}\nolimits_{x}\right)  ^{p}\\\text{(by
(\ref{pf.prop.5b'.adp}))}}}\left(  U\right)  =\underbrace{\left(
\operatorname*{ad}\nolimits_{x}\right)  ^{p}}_{=\left(  \operatorname*{ad}%
\nolimits_{x}\right)  ^{p-1}\circ\operatorname*{ad}\nolimits_{x}}\left(
U\right)  =\left(  \left(  \operatorname*{ad}\nolimits_{x}\right)  ^{p-1}%
\circ\operatorname*{ad}\nolimits_{x}\right)  \left(  U\right) \\
&  =\left(  \operatorname*{ad}\nolimits_{x}\right)  ^{p-1}\left(
\underbrace{\operatorname*{ad}\nolimits_{x}\left(  U\right)  }_{\in
\overline{\mathfrak{g}}^{\prime}}\right)  \in\left(  \operatorname*{ad}%
\nolimits_{x}\right)  ^{p-1}\left(  \overline{\mathfrak{g}}^{\prime}\right)
\subseteq\overline{\mathfrak{g}}^{\prime}\ \ \ \ \ \ \ \ \ \ \left(  \text{by
(\ref{pf.prop.5b'.sub})}\right)  .
\end{align*}
Let us now forget that we fixed $U$. We thus have shown that $\left[
x^{p},U\right]  \in\overline{\mathfrak{g}}^{\prime}$ for every $U\in
L_{i}^{\prime}$. In other words, $\left[  x^{p},L_{i}^{\prime}\right]
\subseteq\overline{\mathfrak{g}}^{\prime}$. This completes our proof of
(\ref{pf.prop.6b'.a.PLi}).}

Next, we notice that any two positive integers $i$ and $j$ satisfy%
\begin{equation}
\left[  L_{i}^{\prime},L_{j}^{\prime}\right]  \subseteq L_{i+j}^{\prime}.
\label{pf.prop.6b'.a.LiLj}%
\end{equation}
\footnote{The proof of (\ref{pf.prop.6b'.a.LiLj}) is analogous to the proof of
(\ref{pf.prop.6.a.LiLj}) given earlier in this note.}

Now,%
\begin{align*}
\left[  \sum_{i\geq1}L_{i}^{\prime},\sum_{i\geq1}L_{i}^{\prime}\right]   &
=\left[  \sum_{i\geq1}L_{i}^{\prime},\sum_{j\geq1}L_{j}^{\prime}\right]
=\sum_{i\geq1}\sum_{j\geq1}\underbrace{\left[  L_{i}^{\prime},L_{j}^{\prime
}\right]  }_{\substack{\subseteq L_{i+j}^{\prime}\\\text{(by
(\ref{pf.prop.6b'.a.LiLj}))}}}\subseteq\sum_{i\geq1}\sum_{j\geq1}%
L_{i+j}^{\prime}\\
&  \subseteq\sum_{k\geq2}L_{k}^{\prime}\ \ \ \ \ \ \ \ \ \ \left(  \text{since
}i+j\geq2\text{ for any }i\geq1\text{ and }j\geq1\right) \\
&  =L_{2}^{\prime}+L_{3}^{\prime}+L_{4}^{\prime}+\cdots=\overline
{\mathfrak{g}}^{\prime}.
\end{align*}
Recall now that $\overline{\mathfrak{g}}^{\prime}=L_{2}^{\prime}+L_{3}%
^{\prime}+L_{4}^{\prime}+\cdots=\sum_{i\geq2}L_{i}^{\prime}\subseteq
\sum_{i\geq1}L_{i}^{\prime}$. Thus,%
\[
\left[  \overline{\mathfrak{g}}^{\prime},\sum_{i\geq1}L_{i}^{\prime}\right]
\subseteq\left[  \sum_{i\geq1}L_{i}^{\prime},\sum_{i\geq1}L_{i}^{\prime
}\right]  \subseteq\overline{\mathfrak{g}}^{\prime}.
\]

Since $\mathfrak{h}_{p}^{\prime}=\overline{\mathfrak{g}}^{\prime}+P_{p}$, we
have%
\[
\left[  \mathfrak{h}_{p}^{\prime},\sum_{i\geq1}L_{i}^{\prime}\right]
=\underbrace{\left[  \overline{\mathfrak{g}}^{\prime},\sum_{i\geq1}%
L_{i}^{\prime}\right]  }_{\subseteq\overline{\mathfrak{g}}^{\prime}%
}+\underbrace{\left[  P_{p},\sum_{i\geq1}L_{i}^{\prime}\right]  }%
_{\substack{\subseteq\overline{\mathfrak{g}}^{\prime}\\\text{(by
(\ref{pf.prop.6b'.a.PLi}))}}}\subseteq\overline{\mathfrak{g}}^{\prime
}+\overline{\mathfrak{g}}^{\prime}=\overline{\mathfrak{g}}^{\prime}.
\]
Since $\overline{\mathfrak{g}}^{\prime}\subseteq\sum_{i\geq1}L_{i}^{\prime}$,
we now have
\begin{equation}
\left[  \mathfrak{h}_{p}^{\prime},\overline{\mathfrak{g}}^{\prime}\right]
\subseteq\left[  \mathfrak{h}_{p}^{\prime},\sum_{i\geq1}L_{i}^{\prime}\right]
\subseteq\overline{\mathfrak{g}}^{\prime}. \label{pf.prop.6b'.a.4}%
\end{equation}
But we also have $L=L_{1}^{\prime}\subseteq\sum_{i\geq1}L_{i}^{\prime}$ and
thus%
\begin{equation}
\left[  \mathfrak{h}_{p}^{\prime},L\right]  \subseteq\left[  \mathfrak{h}%
_{p}^{\prime},\sum_{i\geq1}L_{i}^{\prime}\right]  \subseteq\overline
{\mathfrak{g}}^{\prime}. \label{pf.prop.6b'.a.6}%
\end{equation}
From this, we easily obtain%
\[
\left[  \mathfrak{h}_{p}^{\prime},P_{p}\right]  \subseteq\overline
{\mathfrak{g}}^{\prime}%
\]
\footnote{\textit{Proof.} It is clearly enough to show that $\left[
\mathfrak{h}_{p}^{\prime},x^{p}\right]  \subseteq\overline{\mathfrak{g}%
}^{\prime}$ for every $x\in L$ (since the $\mathbf{k}$-module $P_{p}$ is
spanned by elements of the form $x^{p}$ for $x\in L$). So let $x\in L$.
\par
We shall use the notations introduced in the proof of Proposition
\ref{prop.5b'}. Let $U\in\mathfrak{h}_{p}^{\prime}$. Then, the definition of
$\operatorname*{ad}\nolimits_{x}$ yields%
\[
\operatorname*{ad}\nolimits_{x}\left(  U\right)  =\left[  \underbrace{x}_{\in
L},\underbrace{U}_{\in\mathfrak{h}_{p}^{\prime}}\right]  \in\left[
L,\mathfrak{h}_{p}^{\prime}\right]  =\left[  \mathfrak{h}_{p}^{\prime
},L\right]  \subseteq\overline{\mathfrak{g}}^{\prime}%
\ \ \ \ \ \ \ \ \ \ \left(  \text{by (\ref{pf.prop.6b'.a.6})}\right)  .
\]
But the definition of $\operatorname*{ad}\nolimits_{x^{p}}$ yields
$\operatorname*{ad}\nolimits_{x^{p}}\left(  U\right)  =\left[  x^{p},U\right]
$, so that%
\begin{align*}
\left[  x^{p},U\right]   &  =\underbrace{\operatorname*{ad}\nolimits_{x^{p}}%
}_{\substack{=\left(  \operatorname*{ad}\nolimits_{x}\right)  ^{p}\\\text{(by
(\ref{pf.prop.5b'.adp}))}}}\left(  U\right)  =\underbrace{\left(
\operatorname*{ad}\nolimits_{x}\right)  ^{p}}_{=\left(  \operatorname*{ad}%
\nolimits_{x}\right)  ^{p-1}\circ\operatorname*{ad}\nolimits_{x}}\left(
U\right)  =\left(  \left(  \operatorname*{ad}\nolimits_{x}\right)  ^{p-1}%
\circ\operatorname*{ad}\nolimits_{x}\right)  \left(  U\right) \\
&  =\left(  \operatorname*{ad}\nolimits_{x}\right)  ^{p-1}\left(
\underbrace{\operatorname*{ad}\nolimits_{x}\left(  U\right)  }_{\in
\overline{\mathfrak{g}}^{\prime}}\right)  \in\left(  \operatorname*{ad}%
\nolimits_{x}\right)  ^{p-1}\left(  \overline{\mathfrak{g}}^{\prime}\right)
\subseteq\overline{\mathfrak{g}}^{\prime}\ \ \ \ \ \ \ \ \ \ \left(  \text{by
(\ref{pf.prop.5b'.sub})}\right)  .
\end{align*}
Hence, $\left[  U,x^{p}\right]  =-\underbrace{\left[  x^{p},U\right]  }%
_{\in\overline{\mathfrak{g}}^{\prime}}\in\overline{\mathfrak{g}}^{\prime}$.
\par
Let us now forget that we fixed $U$. We thus have shown that $\left[
U,x^{p}\right]  \in\overline{\mathfrak{g}}^{\prime}$ for every $U\in
\mathfrak{h}_{p}^{\prime}$. In other words, $\left[  \mathfrak{h}_{p}^{\prime
},x^{p}\right]  \subseteq\overline{\mathfrak{g}}^{\prime}$, qed.}.

Now, using $\mathfrak{h}_{p}^{\prime}=\overline{\mathfrak{g}}^{\prime}+P_{p}$
again, we obtain%
\[
\left[  \mathfrak{h}_{p}^{\prime},\mathfrak{h}_{p}^{\prime}\right]
=\underbrace{\left[  \mathfrak{h}_{p}^{\prime},\overline{\mathfrak{g}}%
^{\prime}\right]  }_{\substack{\subseteq\overline{\mathfrak{g}}^{\prime
}\\\text{(by (\ref{pf.prop.6b'.a.4}))}}}+\underbrace{\left[  \mathfrak{h}%
_{p}^{\prime},P_{p}\right]  }_{\subseteq\overline{\mathfrak{g}}^{\prime}%
}\subseteq\overline{\mathfrak{g}}^{\prime}+\overline{\mathfrak{g}}^{\prime
}=\overline{\mathfrak{g}}^{\prime}.
\]
This proves Proposition \ref{prop.6b'} \textbf{(a)}.

\textbf{(b)} We need to show that $\left[  \mathfrak{h}_{p}^{\prime
},\mathfrak{h}_{p}^{\prime}\right]  \subseteq\mathfrak{h}_{p}^{\prime}$ and
$\left[  \mathfrak{h}_{p}^{\prime}+L,\mathfrak{h}_{p}^{\prime}+L\right]
\subseteq\mathfrak{h}_{p}^{\prime}+L$.

The relation $\left[  \mathfrak{h}_{p}^{\prime},\mathfrak{h}_{p}^{\prime
}\right]  \subseteq\mathfrak{h}_{p}^{\prime}$ follows from $\left[
\mathfrak{h}_{p}^{\prime},\mathfrak{h}_{p}^{\prime}\right]  \subseteq
\overline{\mathfrak{g}}^{\prime}\subseteq\mathfrak{h}_{p}^{\prime}$.

We have%
\begin{align*}
\left[  \mathfrak{h}_{p}^{\prime}+L,\mathfrak{h}_{p}^{\prime}+L\right]   &
=\underbrace{\left[  \mathfrak{h}_{p}^{\prime},\mathfrak{h}_{p}^{\prime
}\right]  }_{\subseteq\overline{\mathfrak{g}}^{\prime}}+\underbrace{\left[
\mathfrak{h}_{p}^{\prime},L\right]  }_{\substack{\subseteq\overline
{\mathfrak{g}}^{\prime}\\\text{(by (\ref{pf.prop.6b'.a.6}))}}%
}+\underbrace{\left[  L,\mathfrak{h}_{p}^{\prime}\right]  }%
_{\substack{=\left[  \mathfrak{h}_{p}^{\prime},L\right]  \subseteq
\overline{\mathfrak{g}}^{\prime}\\\text{(by (\ref{pf.prop.6b'.a.6}))}%
}}+\underbrace{\left[  L,L\right]  }_{\substack{=L_{2}^{\prime}\subseteq
L_{2}^{\prime}+L_{3}^{\prime}+L_{4}^{\prime}+\cdots\\=\overline{\mathfrak{g}%
}^{\prime}}}\\
&  \subseteq\overline{\mathfrak{g}}^{\prime}+\overline{\mathfrak{g}}^{\prime
}+\overline{\mathfrak{g}}^{\prime}+\overline{\mathfrak{g}}^{\prime}%
=\overline{\mathfrak{g}}^{\prime}\subseteq\mathfrak{h}_{p}^{\prime}%
\subseteq\mathfrak{h}_{p}^{\prime}+L.
\end{align*}
Proposition \ref{prop.6b'} \textbf{(b)} is thus shown.
\end{proof}

Next, we state an analogue of Proposition \ref{prop.7} which is stronger than
Proposition \ref{prop.7'} (of course under the assumption that $\mathbf{k}$ is
an $\mathbb{F}_{p}$-algebra):

\begin{proposition}
\label{prop.7''}We have $\left(  \mathfrak{h}_{p}^{\prime}\right)  ^{\star
}\subseteq\operatorname*{Ker}\left(  \mathbf{t}^{\prime}\right)  $.
\end{proposition}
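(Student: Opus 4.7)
The plan is to simply combine the previous results, exactly as in the proof of Proposition \ref{prop.7}. The key point is that $\operatorname{Ker}\left(\mathbf{t}^{\prime}\right)$ has already been shown to be a $\mathbf{k}$-subalgebra of $T(L)$ containing $L^{\otimes 0}$ and closed under multiplication, by parts \textbf{(a)} and \textbf{(b)} of Proposition \ref{prop.3'}. So it will suffice to check that the generating set $\mathfrak{h}_{p}^{\prime} = \overline{\mathfrak{g}}^{\prime} + P_{p}$ lies inside $\operatorname{Ker}\left(\mathbf{t}^{\prime}\right)$, and then invoke the fact that $\left(\mathfrak{h}_{p}^{\prime}\right)^{\star}$ is by definition the smallest $\mathbf{k}$-subalgebra of $T(L)$ containing $\mathfrak{h}_{p}^{\prime}$.

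First I would observe that Proposition \ref{prop.7'} yields $\overline{\mathfrak{g}}^{\prime} \subseteq \left(\overline{\mathfrak{g}}^{\prime}\right)^{\star} \subseteq \operatorname{Ker}\left(\mathbf{t}^{\prime}\right)$. Next, Proposition \ref{prop.3''} says $x^{p} \in \operatorname{Ker}\left(\mathbf{t}^{\prime}\right)$ for every $x \in L$; since $P_{p}$ is by Definition \ref{def.Pp} the $\mathbf{k}$-linear span of such elements $x^{p}$, and $\operatorname{Ker}\left(\mathbf{t}^{\prime}\right)$ is a $\mathbf{k}$-submodule, we conclude $P_{p} \subseteq \operatorname{Ker}\left(\mathbf{t}^{\prime}\right)$. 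Adding the two inclusions gives $\mathfrak{h}_{p}^{\prime} = \overline{\mathfrak{g}}^{\prime} + P_{p} \subseteq \operatorname{Ker}\left(\mathbf{t}^{\prime}\right)$.

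Finally, since $\operatorname{Ker}\left(\mathbf{t}^{\prime}\right)$ is a $\mathbf{k}$-subalgebra of $T(L)$ containing $\mathfrak{h}_{p}^{\prime}$, it must contain the $\mathbf{k}$-subalgebra generated by $\mathfrak{h}_{p}^{\prime}$, which is $\left(\mathfrak{h}_{p}^{\prime}\right)^{\star}$. This proves the desired inclusion. There is no genuine obstacle here: the real work has all been done in Propositions \ref{prop.3'}, \ref{prop.7'}, and \ref{prop.3''}, and the present proposition is just the packaging step that assembles them.
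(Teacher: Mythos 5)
Your proof is correct and follows essentially the same route as the paper: show $\mathfrak{h}_{p}^{\prime}\subseteq\operatorname*{Ker}\left(\mathbf{t}^{\prime}\right)$ and then use that $\operatorname*{Ker}\left(\mathbf{t}^{\prime}\right)$ is a $\mathbf{k}$-subalgebra. The only (harmless) difference is that you obtain $\overline{\mathfrak{g}}^{\prime}\subseteq\operatorname*{Ker}\left(\mathbf{t}^{\prime}\right)$ by citing Proposition \ref{prop.7'} directly, whereas the paper re-derives it by induction from Proposition \ref{prop.3'} \textbf{(c)} and \textbf{(d)}; your shortcut is perfectly legitimate.
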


\begin{proof}
[Proof of Proposition \ref{prop.7''}.]We have $P_{p}\subseteq
\operatorname*{Ker}\left(  \mathbf{t}^{\prime}\right)  $ due to Proposition
\ref{prop.3''}. Also, $L_{2}^{\prime}=\left[  L,L\right]  \subseteq
\operatorname*{Ker}\left(  \mathbf{t}^{\prime}\right)  $ by Proposition
\ref{prop.3'} \textbf{(c)}. Using this and Proposition \ref{prop.3'}
\textbf{(d)}, we can show that $L_{i}^{\prime}\subseteq\operatorname*{Ker}%
\left(  \mathbf{t}^{\prime}\right)  $ for each $i\geq2$ (by induction over
$i$). Thus, $\overline{\mathfrak{g}}^{\prime}\subseteq\operatorname*{Ker}%
\left(  \mathbf{t}^{\prime}\right)  $ (since $\overline{\mathfrak{g}}^{\prime
}=L_{2}^{\prime}+L_{3}^{\prime}+L_{4}^{\prime}+\cdots$). Combined with
$P_{p}\subseteq\operatorname*{Ker}\left(  \mathbf{t}^{\prime}\right)  $, this
yields $\mathfrak{h}_{p}^{\prime}\subseteq\operatorname*{Ker}\left(
\mathbf{t}^{\prime}\right)  $ (since $\mathfrak{h}_{p}^{\prime}=\overline
{\mathfrak{g}}^{\prime}+P_{p}$). Since $\operatorname*{Ker}\left(
\mathbf{t}^{\prime}\right)  $ is a $\mathbf{k}$-subalgebra of $T\left(
L\right)  $ (because of Proposition \ref{prop.3'} \textbf{(a)} and Proposition
\ref{prop.3'} \textbf{(b)}), this yields that $\left(  \mathfrak{h}%
_{p}^{\prime}\right)  ^{\star}\subseteq\operatorname*{Ker}\left(
\mathbf{t}^{\prime}\right)  $. This proves Proposition \ref{prop.7''}.
\end{proof}

Next, we state some lemmas. The first lemma is an analogue of Lemma
\ref{lem.Kert.1} again:

\begin{lemma}
\label{lem.Kert''.1}Let $u\in L$. Let $S$ be a $\mathbf{k}$-submodule of
$T\left(  L\right)  $ such that $u^{p}\in S^{\star}$ and such that $\left[
u,S\right]  \subseteq S^{\star}$. Then, $\sum_{j=0}^{p-1}S^{\star}u^{j}$ is a
$\mathbf{k}$-subalgebra of $T\left(  L\right)  $.
\end{lemma}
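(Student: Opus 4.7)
The plan is to mirror the proof of Lemma \ref{lem.Kert'.1}, but to exploit the extra hypothesis $u^p \in S^\star$ in order to truncate the infinite sum $\sum_{j \in \mathbb{N}} S^\star u^j$ down to its first $p$ summands $\sum_{j=0}^{p-1} S^\star u^j$. Since $1 \in S^\star = S^\star u^0 \subseteq \sum_{j=0}^{p-1} S^\star u^j$ is immediate, the whole content of the lemma is that this $\mathbf{k}$-submodule is closed under multiplication.

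First, I would derive the analogue of (\ref{pf.lem.Kert.1.1}) in the plain (non-super) setting: from $[u,S] \subseteq S^\star$ one has $us = [u,s] + su \in S^\star + Su$ for every $s \in S$, hence $uS \subseteq S^\star + Su$. A standard induction on $i$ (exactly as in the proof of Lemma \ref{lem.Kert.1}) then gives $uS^i \subseteq S^\star + S^i u$ for every $i \in \mathbb{N}$, and summing over $i$ yields
\[
uS^\star \subseteq S^\star + S^\star u.
\]
A further induction on $j$, identical to the one establishing (\ref{pf.lem.Kert'.1.1}), then produces
\[
u^j S^\star \subseteq \sum_{k=0}^{j} S^\star u^k \qquad \text{for every } j \in \mathbb{N}.
\]

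Next comes the step where the new hypothesis $u^p \in S^\star$ enters. Since $S^\star$ is a $\mathbf{k}$-subalgebra containing $u^p$, one has $S^\star u^p \subseteq S^\star \cdot S^\star \subseteq S^\star$. Iterating this gives, for every integer $m \geq 0$, the inclusion $S^\star u^m \subseteq S^\star u^{m \bmod p}$, so
\[
S^\star u^m \subseteq \sum_{k=0}^{p-1} S^\star u^k \qquad \text{for every } m \in \mathbb{N}.
\]
Combining with the previous display, one obtains $u^j S^\star \subseteq \sum_{k=0}^{p-1} S^\star u^k$ for every $j \in \mathbb{N}$.

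Finally, I would expand
\[
\Bigl(\sum_{j=0}^{p-1} S^\star u^j\Bigr)\Bigl(\sum_{i=0}^{p-1} S^\star u^i\Bigr)
= \sum_{j=0}^{p-1}\sum_{i=0}^{p-1} S^\star \cdot (u^j S^\star) \cdot u^i,
\]
substitute $u^j S^\star \subseteq \sum_{k=0}^{p-1} S^\star u^k$, use $S^\star S^\star \subseteq S^\star$, and apply once more $S^\star u^{k+i} \subseteq \sum_{\ell=0}^{p-1} S^\star u^\ell$ to each summand (since $k+i \leq 2p-2$). The result is contained in $\sum_{\ell=0}^{p-1} S^\star u^\ell$, proving closure under multiplication. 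The only genuinely new ingredient beyond the proofs of Lemmas \ref{lem.Kert.1} and \ref{lem.Kert'.1} is the collapse $S^\star u^p \subseteq S^\star$, and this is where I expect the only real idea (and simultaneously the only bookkeeping hazard) to lie: one must be careful to multiply $u^p$ on the \emph{right} of $S^\star$, not on the left, since $u^p$ need not commute past elements of $S^\star$.
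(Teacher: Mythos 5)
Your proof is correct and matches the paper's in its essentials: the decisive new ingredient in both is exactly the collapse $S^{\star}u^{p}\subseteq S^{\star}$ obtained by multiplying $u^{p}\in S^{\star}$ on the \emph{right}, which then truncates $\sum_{j\in\mathbb{N}}S^{\star}u^{j}$ to $\sum_{j=0}^{p-1}S^{\star}u^{j}$. The paper packages this more economically by first invoking Lemma \ref{lem.Kert'.1} as a black box (so $\sum_{j\in\mathbb{N}}S^{\star}u^{j}$ is already known to be a subalgebra) and then showing $\sum_{j\in\mathbb{N}}S^{\star}u^{j}=\sum_{j=0}^{p-1}S^{\star}u^{j}$ via an induction on the exponent, whereas you re-derive the intermediate inclusions $u^{j}S^{\star}\subseteq\sum_{k=0}^{j}S^{\star}u^{k}$ and check closure of the finite sum by direct expansion; both routes use the same underlying calculations.
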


\begin{proof}
[Proof of Lemma \ref{lem.Kert''.1}.]Lemma \ref{lem.Kert'.1} yields that
$\sum_{j\in\mathbb{N}}S^{\star}u^{j}$ is a $\mathbf{k}$-subalgebra of
$T\left(  L\right)  $. We shall now show that $\sum_{j\in\mathbb{N}}S^{\star
}u^{j}=\sum_{j=0}^{p-1}S^{\star}u^{j}$.

We have $S^{\star}u^{p}\subseteq S^{\star}u^{0}$%
\ \ \ \ \footnote{\textit{Proof.} We have $S^{\star}\underbrace{u^{p}}_{\in
S^{\star}}=S^{\star}S^{\star}\subseteq S^{\star}$ (since $S^{\star}$ is a
$\mathbf{k}$-algebra). But $S^{\star}\underbrace{u^{0}}_{=1}=S^{\star}$, so
that $S^{\star}u^{p}\subseteq S^{\star}=S^{\star}u^{0}$, qed.}. Thus,%
\begin{equation}
S^{\star}u^{k}\subseteq\sum_{j=0}^{p-1}S^{\star}u^{j}%
\ \ \ \ \ \ \ \ \ \ \text{for every }k\in\mathbb{N}. \label{pf.lem.Kert''.1.1}%
\end{equation}
\footnote{\textit{Proof of (\ref{pf.lem.Kert''.1.1}):} We shall prove
(\ref{pf.lem.Kert''.1.1}) by induction over $k$.
\par
\textit{Induction base:} We have $S^{\star}u^{0}\subseteq\sum_{j=0}%
^{p-1}S^{\star}u^{j}$ (since $S^{\star}u^{0}$ is an addend of the sum
$\sum_{j=0}^{p-1}S^{\star}u^{j}$). In other words, (\ref{pf.lem.Kert''.1.1})
holds for $k=0$. This completes the induction base.
\par
\textit{Induction step:} Let $K\in\mathbb{N}$. Assume that
(\ref{pf.lem.Kert''.1.1}) holds for $k=K$. We must prove that
(\ref{pf.lem.Kert''.1.1}) holds for $k=K+1$.
\par
We have $S^{\star}u^{K}\subseteq\sum_{j=0}^{p-1}S^{\star}u^{j}$ (since
(\ref{pf.lem.Kert''.1.1}) holds for $k=K$). Hence,%
\begin{align*}
S^{\star}\underbrace{u^{K+1}}_{=u^{K}u}  &  =\underbrace{S^{\star}u^{K}%
}_{\subseteq\sum_{j=0}^{p-1}S^{\star}u^{j}}u\subseteq\left(  \sum_{j=0}%
^{p-1}S^{\star}u^{j}\right)  u\\
&  =\sum_{j=0}^{p-1}S^{\star}\underbrace{u^{j}u}_{=u^{j+1}}=\sum_{j=0}%
^{p-1}S^{\star}u^{j+1}=\sum_{j=1}^{p}S^{\star}u^{j}\\
&  =\sum_{j=1}^{p-1}S^{\star}u^{j}+\underbrace{S^{\star}u^{p}}_{\subseteq
S^{\star}u^{0}}\subseteq\sum_{j=1}^{p-1}S^{\star}u^{j}+S^{\star}u^{0}%
=\sum_{j=0}^{p-1}S^{\star}u^{j}.
\end{align*}
In other words, (\ref{pf.lem.Kert''.1.1}) holds for $k=K+1$. This completes
the induction step. Thus, (\ref{pf.lem.Kert''.1.1}) is proven.} Now,%
\[
\sum_{j\in\mathbb{N}}S^{\star}u^{j}=\sum_{k\in\mathbb{N}}\underbrace{S^{\star
}u^{k}}_{\substack{\subseteq\sum_{j=0}^{p-1}S^{\star}u^{j}\\\text{(by
(\ref{pf.lem.Kert''.1.1}))}}}\subseteq\sum_{k\in\mathbb{N}}\sum_{j=0}%
^{p-1}S^{\star}u^{j}\subseteq\sum_{j=0}^{p-1}S^{\star}u^{j}.
\]
Combined with $\sum_{j=0}^{p-1}S^{\star}u^{j}\subseteq\sum_{j\in\mathbb{N}%
}S^{\star}u^{j}$ (this is obvious), this yields $\sum_{j\in\mathbb{N}}%
S^{\star}u^{j}=\sum_{j=0}^{p-1}S^{\star}u^{j}$. Hence, $\sum_{j=0}%
^{p-1}S^{\star}u^{j}$ is a $\mathbf{k}$-subalgebra of $T\left(  L\right)  $
(since $\sum_{j\in\mathbb{N}}S^{\star}u^{j}$ is a $\mathbf{k}$-subalgebra of
$T\left(  L\right)  $). This proves Lemma \ref{lem.Kert''.1}.
\end{proof}

Next comes, again, an an analogue of Lemma \ref{lem.Kert.deriv0}:

\begin{lemma}
\label{lem.Kert''.deriv0}Let $M$ be a $\mathbf{k}$-submodule of $L$. Let $g\in
L^{\ast}$ be such that $g\left(  M\right)  =0$. Then:

\textbf{(a)} We have $\partial_{g}^{\prime}\left(  \left(  M+\mathfrak{h}%
_{p}^{\prime}\right)  ^{\star}\right)  =0$.

\textbf{(b)} Let $q\in L$ be such that $g\left(  q\right)  =1$. Let $\left(
U_{0},U_{1},\ldots,U_{p-1}\right)  $ be a $p$-tuple of elements of $\left(
M+\mathfrak{h}_{p}^{\prime}\right)  ^{\star}$. If $\partial_{g}^{\prime
}\left(  \sum_{i=0}^{p-1}U_{i}q^{i}\right)  =0$, then every $i\in\left\{
1,2,\ldots,p-1\right\}  $ satisfies $U_{i}=0$.
\end{lemma}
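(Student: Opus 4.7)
The plan is to follow the proofs of Lemmas \ref{lem.Kert.deriv0} and \ref{lem.Kert'.deriv0}, replacing the torsionfree hypothesis there by the fact that in any $\mathbb{F}_p$-algebra $\mathbf{k}$, the integers $1, 2, \ldots, p-1$ are units.

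Part (a) goes as follows: since $\partial_g^{\prime}(v) = g(v)$ for $v \in L$, we have $\partial_g^{\prime}(M) = g(M) = 0$. Proposition \ref{prop.7''} gives $\mathfrak{h}_p^{\prime} \subseteq (\mathfrak{h}_p^{\prime})^{\star} \subseteq \operatorname{Ker}(\mathbf{t}^{\prime})$, and Proposition \ref{prop.2'}(a) then yields $\partial_g^{\prime}(\mathfrak{h}_p^{\prime}) = 0$. Thus $\partial_g^{\prime}(M + \mathfrak{h}_p^{\prime}) = 0$, and Lemma \ref{lem.Kert'.deriv0.gen} applied with $N = M + \mathfrak{h}_p^{\prime}$ concludes.

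For part (b), the derivation property of $\partial_g^{\prime}$ combined with $\partial_g^{\prime}(q) = g(q) = 1$ yields $\partial_g^{\prime}(q^i) = i q^{i-1}$ for all $i \in \mathbb{N}$, exactly as in the proof of Lemma \ref{lem.Kert'.deriv0}(b). Using part (a) we have $\partial_g^{\prime}(U_i) = 0$, so Proposition \ref{prop.1'}(b) gives $\partial_g^{\prime}(U_i q^i) = i U_i q^{i-1}$, and the hypothesis rewrites as
\[
\sum_{j=0}^{p-2} (j+1)\, U_{j+1}\, q^j \;=\; 0.
\]
I then prove by induction on $N \in \{1, 2, \ldots, p\}$ the following statement: for every $N$-tuple $(V_0, \ldots, V_{N-1})$ of elements of $(M + \mathfrak{h}_p^{\prime})^{\star}$ with $\partial_g^{\prime}\bigl(\sum_{i=0}^{N-1} V_i q^i\bigr) = 0$, one has $V_i = 0$ for all $i \in \{1, \ldots, N-1\}$. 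The base case $N = 1$ is vacuous. For the inductive step, the same computation produces $\sum_{j=0}^{N-2}(j+1) V_{j+1} q^j = 0$, which trivially satisfies $\partial_g^{\prime}(\cdot) = 0$. Applying the induction hypothesis to the $(N-1)$-tuple $\bigl((j+1)V_{j+1}\bigr)_{j=0,\ldots,N-2}$ (whose entries lie in $(M + \mathfrak{h}_p^{\prime})^{\star}$ since this is a $\mathbf{k}$-submodule) gives $(j+1)V_{j+1} = 0$ for $j \in \{1, \ldots, N-2\}$; since each such $j+1$ lies in $\{2, \ldots, N-1\} \subseteq \{2, \ldots, p-1\}$ and is therefore a unit of $\mathbf{k}$, we conclude $V_{j+1} = 0$ for $j \in \{1, \ldots, N-2\}$. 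Substituting back into the displayed equation collapses the sum to its $j = 0$ term, giving $V_1 = 0$. Applying this statement with $N = p$ to the $p$-tuple $(U_0, \ldots, U_{p-1})$ proves the lemma.

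The main obstacle is setting up the truncation correctly: in the $\mathbb{F}_p$-algebra setting one cannot mimic the infinite-support induction of Lemma \ref{lem.Kert'.deriv0}(b), since $\partial_g^{\prime}(q^p) = p q^{p-1} = 0$ destroys any information about a hypothetical $U_p$. Restricting to a $p$-tuple keeps every coefficient $j+1$ that we need to cancel within the invertible range $\{1, \ldots, p-1\}$, which is exactly what makes the induction work.
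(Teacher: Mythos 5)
Your proof is correct and is precisely the spelled-out version of what the paper leaves to the reader: the paper only says the proof is analogous to Lemmas \ref{lem.Kert.deriv0} (a) and \ref{lem.Kert'.deriv0} (b) with the listed replacements, and you have carried out that analogy faithfully, including the key adjustment of bounding the induction at $N = p$ so that every coefficient $j+1$ that must be cancelled lies in $\{1, \ldots, p-1\}$ and is hence a unit of $\mathbf{k}$.
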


\begin{proof}
[Proof of Lemma \ref{lem.Kert''.deriv0}.]The proof of Lemma
\ref{lem.Kert''.deriv0} \textbf{(a)} is analogous to that of Lemma
\ref{lem.Kert.deriv0} \textbf{(a)}.

The proof of Lemma \ref{lem.Kert''.deriv0} \textbf{(b)} is analogous to that
of Lemma \ref{lem.Kert'.deriv0} \textbf{(b)} (with some rather obvious
changes: $\overline{\mathfrak{g}}^{\prime}$ has to be replaced by
$\mathfrak{h}_{p}^{\prime}$; the sequence $\left(  U_{0},U_{1},U_{2}%
,\ldots\right)  $ has to be replaced by the $p$-tuple $\left(  U_{0}%
,U_{1},\ldots,U_{p-1}\right)  $; the assumption that the additive group
$T\left(  L\right)  $ is torsionfree has to be replaced by the obvious
observation that the integers $1,2,\ldots,p-1$ are invertible in $\mathbf{k}$
(since $p=0$ in $\mathbf{k}$)).
\end{proof}

The positive-characteristic version of Theorem \ref{thm.Kert'} can now be
stated and proven:

\begin{theorem}
\label{thm.Kert''}Assume that the $\mathbf{k}$-module $L$ is free. Then,
$\left(  \mathfrak{h}_{p}^{\prime}\right)  ^{\star}=\operatorname*{Ker}\left(
\mathbf{t}^{\prime}\right)  $.
\end{theorem}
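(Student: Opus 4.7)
The plan is to imitate the proof of Theorem \ref{thm.Kert'} almost verbatim, substituting $\mathfrak{h}_{p}^{\prime}$ for $\overline{\mathfrak{g}}^{\prime}$, Lemma \ref{lem.Kert''.1} for Lemma \ref{lem.Kert'.1}, and Lemma \ref{lem.Kert''.deriv0} for Lemma \ref{lem.Kert'.deriv0}. The inclusion $\left(  \mathfrak{h}_{p}^{\prime}\right)  ^{\star}\subseteq\operatorname*{Ker}\left(  \mathbf{t}^{\prime}\right)  $ is already Proposition \ref{prop.7''}, so only the reverse inclusion requires work. Fix $U\in\operatorname*{Ker}\left(  \mathbf{t}^{\prime}\right)  $. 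Since only finitely many basis elements of $L$ appear in $U$, I may WLOG assume $L$ has a finite basis $\left(  e_{1},e_{2},\ldots,e_{n}\right)  $; let $e_{i}^{\ast}\in L^{\ast}$ denote the dual functionals with $e_{i}^{\ast}\left(  e_{j}\right)  =\delta_{ij}$. For each $k\in\left\{  0,1,\ldots,n\right\}  $, set $M_{k}=\operatorname{span}\left(  e_{1},\ldots,e_{k}\right)  $, $\mathfrak{h}_{k}=M_{k}+\mathfrak{h}_{p}^{\prime}$, and $H_{k}=\mathfrak{h}_{k}^{\star}$. Then $H_{0}=\left(  \mathfrak{h}_{p}^{\prime}\right)  ^{\star}$ while $H_{n}=T\left(  L\right)  $ (since $\mathfrak{h}_{n}\supseteq L$), so $U\in H_{n}$ automatically. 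It will therefore suffice to establish the descent implication
\[
\text{if }U\in H_{k},\text{ then }U\in H_{k-1}
\]
for each $k\in\left\{  1,2,\ldots,n\right\}  $; iterating it $n$ times starting from $U\in H_{n}$ will yield $U\in H_{0}=\left(  \mathfrak{h}_{p}^{\prime}\right)  ^{\star}$, as required.

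To perform the descent at a fixed $k$, I will verify the hypotheses of Lemma \ref{lem.Kert''.1} with $u=e_{k}$ and $S=\mathfrak{h}_{k-1}$. The essential new input is $e_{k}^{p}\in P_{p}\subseteq\mathfrak{h}_{p}^{\prime}\subseteq\mathfrak{h}_{k-1}\subseteq\mathfrak{h}_{k-1}^{\star}$, which is exactly what Definition \ref{def.Pp} was designed to provide. The bracket condition $\left[  e_{k},\mathfrak{h}_{k-1}\right]  \subseteq\mathfrak{h}_{k-1}^{\star}$ follows by expanding $\mathfrak{h}_{k-1}=M_{k-1}+\mathfrak{h}_{p}^{\prime}$ and combining $\left[  L,L\right]  =L_{2}^{\prime}\subseteq\overline{\mathfrak{g}}^{\prime}$ with $\left[  L,\mathfrak{h}_{p}^{\prime}\right]  \subseteq\overline{\mathfrak{g}}^{\prime}$ (Proposition \ref{prop.5b'}). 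Lemma \ref{lem.Kert''.1} then produces the $\mathbf{k}$-subalgebra $\sum_{j=0}^{p-1}H_{k-1}e_{k}^{j}$, which contains $\mathfrak{h}_{k}$ and hence contains $H_{k}=\mathfrak{h}_{k}^{\star}$. Therefore $U=\sum_{i=0}^{p-1}U_{i}e_{k}^{i}$ for some $U_{i}\in H_{k-1}=\left(  M_{k-1}+\mathfrak{h}_{p}^{\prime}\right)  ^{\star}$. Since $\partial_{e_{k}^{\ast}}^{\prime}\left(  U\right)  =0$ by Proposition \ref{prop.2'}\textbf{(a)}, Lemma \ref{lem.Kert''.deriv0}\textbf{(b)} (applied with $M=M_{k-1}$, $g=e_{k}^{\ast}$, $q=e_{k}$) forces $U_{1}=U_{2}=\cdots=U_{p-1}=0$, whence $U=U_{0}\in H_{k-1}$, completing the descent.

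No genuinely new obstacle arises, since all the subtlety has already been packaged into the preparatory lemmas. The truncation of the infinite sum $\sum_{j\in\mathbb{N}}S^{\star}u^{j}$ of Theorem \ref{thm.Kert'} down to the $p$-term sum $\sum_{j=0}^{p-1}S^{\star}u^{j}$ is powered by the new hypothesis $u^{p}\in S^{\star}$ in Lemma \ref{lem.Kert''.1}, and that hypothesis is the very reason $P_{p}$ had to be adjoined to $\overline{\mathfrak{g}}^{\prime}$ in the definition of $\mathfrak{h}_{p}^{\prime}$. Correspondingly, the torsionfreeness assumption that powered Theorem \ref{thm.Kert'} is replaced by the automatic invertibility of $1,2,\ldots,p-1$ in any $\mathbb{F}_{p}$-algebra, which is what allows Lemma \ref{lem.Kert''.deriv0}\textbf{(b)} to isolate each coefficient $U_{i}$ from the hypothesis $\partial_{g}^{\prime}\left(  \sum_{i=0}^{p-1}U_{i}q^{i}\right)  =0$.
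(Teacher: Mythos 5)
Your proof is correct and follows exactly the route the paper intends: the paper itself only sketches this theorem by listing the substitutions to make in the proof of Theorem \ref{thm.Kert'}, and your write-up carries out those substitutions faithfully (using Proposition \ref{prop.7''} for the easy inclusion, $\mathfrak{h}_{p}^{\prime}$ in place of $\overline{\mathfrak{g}}^{\prime}$, Lemma \ref{lem.Kert''.1} with the extra hypothesis $e_{k}^{p}\in P_{p}\subseteq\mathfrak{h}_{k-1}^{\star}$, and Lemma \ref{lem.Kert''.deriv0}\textbf{(b)} for the coefficient-isolation step). Nothing to add.
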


\begin{proof}
[Proof of Theorem \ref{thm.Kert''}.]The proof of Theorem \ref{thm.Kert''} is
more or less analogous to that of Theorem \ref{thm.Kert'}. (As usual, we need
to make some replacements to the proof:

\begin{itemize}
\item We must replace every $\overline{\mathfrak{g}}^{\prime}$ by
$\mathfrak{h}_{p}^{\prime}$ (with a few exceptions: for instance,
$L_{2}^{\prime}+L_{3}^{\prime}+L_{4}^{\prime}+\cdots=\overline{\mathfrak{g}%
}^{\prime}$ should become $L_{2}^{\prime}+L_{3}^{\prime}+L_{4}^{\prime}%
+\cdots=\overline{\mathfrak{g}}^{\prime}\subseteq\mathfrak{h}_{p}^{\prime}$
rather than $L_{2}^{\prime}+L_{3}^{\prime}+L_{4}^{\prime}+\cdots
=\mathfrak{h}_{p}^{\prime}$).

\item The claim that the additive group $T\left(  L\right)  $ is torsionfree
is now wrong (but we don't need this claim).

\item Instead of using Proposition \ref{prop.7'}, we need to use Proposition
\ref{prop.7''}.

\item Instead of using Proposition \ref{prop.5a'}, we need to use Proposition
\ref{prop.5b'} (specifically, the part of it that says $\left[  L,\mathfrak{h}%
_{p}^{\prime}\right]  \subseteq\mathfrak{h}_{p}^{\prime}$).

\item Every summation sign $\sum_{j\in\mathbb{N}}$ must be replaced by
$\sum_{j=0}^{p-1}$. Similarly, every summation sign $\sum_{i\in\mathbb{N}}$
must be replaced by $\sum_{i=0}^{p-1}$.

\item Instead of using Lemma \ref{lem.Kert'.1}, we need to use Lemma
\ref{lem.Kert''.1}.

\item Instead of using Lemma \ref{lem.Kert'.deriv0}, we need to use Lemma
\ref{lem.Kert''.deriv0}.

\item The sequence $\left(  U_{0},U_{1},U_{2},\ldots\right)  $ has to be
replaced by a $p$-tuple $\left(  U_{0},U_{1},\ldots,U_{p-1}\right)  $.
\end{itemize}

)
\end{proof}

\section{Further questions}

Above, we have described the kernel of $\mathbf{t}$ whenever $L$ is a free
$\mathbf{k}$-module (Theorem \ref{thm.Kert}), and also the kernel of
$\mathbf{t}^{\prime}$ whenever $L$ is a free $\mathbf{k}$-module and
$\mathbf{k}$ is either torsionfree as an additive group (Theorem
\ref{thm.Kert'}) or an $\mathbb{F}_{p}$-algebra (Theorem \ref{thm.Kert''}). It
is tempting to ask whether some of these conditions can be lifted; in
particular, the following generalization seems viable:

\begin{todo}
Can some of our results be extended from the case of $L$ free to the case of
$L$ flat? Lazard's theorem might help deducing the latter from the former.
\end{todo}

Other natural questions include:

\begin{todo}
Having found the kernels of $\mathbf{t}$ and $\mathbf{t}^{\prime}$, the next
logical step appears to be diagonalizing these maps. This has been done for
$\mathbf{t}^{\prime}$ by Amy Pang \cite[Theorem 5.1]{pang} under the
assumption that $\mathbf{k}$ is a characteristic-$0$ field. Pang works with a
basis, but in a basis-free language her result (or, rather, the particular
case of it relevant to us) says that (when $\mathbf{k}$ is a $\mathbb{Q}%
$-algebra) we have $T\left(  L\right)  =T\left(  L\right)
^{\operatorname*{sym}}\cdot\operatorname*{Ker}\left(  \mathbf{t}^{\prime
}\right)  $, where $T\left(  L\right)  ^{\operatorname*{sym}}$ is the
$\mathbf{k}$-submodule of $T\left(  L\right)  $ formed by the symmetric
tensors (i.e., the tensors whose each graded components is invariant under all
permutations of its tensorands). Decomposing $T\left(  L\right)
^{\operatorname*{sym}}$ as the direct sum $\bigoplus_{n\in\mathbb{N}}\left(
L^{\otimes n}\right)  ^{\operatorname*{sym}}$, we realize that this
diagonalizes $\mathbf{t}^{\prime}$, because the operator $\mathbf{t}^{\prime}$
acts on the submodule $\left(  L^{\otimes n}\right)  ^{\operatorname*{sym}%
}\cdot\operatorname*{Ker}\left(  \mathbf{t}^{\prime}\right)  $ as
multiplication by $n$.

However, $T\left(  L\right)  =T\left(  L\right)  ^{\operatorname*{sym}}%
\cdot\operatorname*{Ker}\left(  \mathbf{t}^{\prime}\right)  $ does not hold
when $\mathbf{k}$ is merely torsionfree as an additive group and not a
$\mathbb{Q}$-algebra. For instance, it fails for $\mathbf{k}=\mathbb{Z}$.
Exactly how much of it can be salvaged in this generality remains a question.

The same questions can be asked about $\mathbf{t}$.
\end{todo}

\begin{todo}
The operator $\mathbf{t}^{\prime}$ can be generalized to a sequence $\left(
\mathbf{t}_{1}^{\prime},\mathbf{t}_{2}^{\prime},\mathbf{t}_{3}^{\prime}%
,\ldots\right)  $ of operators on $T\left(  L\right)  $, the $N$-th of which
picks out $N$ tensorands with increasing indices and moves them to the front.
In other words, $\mathbf{t}_{N}^{\prime}:T\left(  L\right)  \rightarrow
T\left(  L\right)  $ is the $\mathbf{k}$-linear map defined by%
\begin{align*}
&  \mathbf{t}_{N}^{\prime}\left(  u_{1}\otimes u_{2}\otimes\cdots\otimes
u_{k}\right) \\
&  =\sum_{1\leq i_{1}<i_{2}<\cdots<i_{N}\leq k}u_{i_{1}}\otimes u_{i_{2}%
}\otimes\cdots\otimes u_{i_{N}}\\
&  \ \ \ \ \ \ \ \ \ \ \otimes u_{1}\otimes u_{2}\otimes\cdots\otimes
\widehat{u_{i_{1}}}\otimes\cdots\otimes\widehat{u_{i_{2}}}\otimes\cdots
\otimes\widehat{\cdots}\otimes\cdots\otimes\widehat{u_{i_{N}}}\otimes
\cdots\otimes u_{k}.
\end{align*}
These are somewhat similar to Schocker's operators in \cite{schocker} (but do
not commute). I suspect that the intersection $\operatorname*{Ker}\left(
\mathbf{t}_{1}^{\prime}\right)  \cap\operatorname*{Ker}\left(  \mathbf{t}%
_{2}^{\prime}\right)  \cap\cdots\cap\operatorname*{Ker}\left(  \mathbf{t}%
_{N}^{\prime}\right)  $ is the algebra generated by $L_{N+1}^{\prime}%
+L_{N+2}^{\prime}+L_{N+3}^{\prime}+\cdots$ (when $\mathbf{k}$ is a
$\mathbb{Q}$-algebra and $L$ is a free $\mathbf{k}$-module).

What I can prove is that the intersection $K_{N}:=\operatorname*{Ker}\left(
\mathbf{t}_{1}^{\prime}\right)  \cap\operatorname*{Ker}\left(  \mathbf{t}%
_{2}^{\prime}\right)  \cap\cdots\cap\operatorname*{Ker}\left(  \mathbf{t}%
_{N}^{\prime}\right)  $ is a $\mathbf{k}$-subalgebra of $T\left(  L\right)  $,
and that it contains $L_{N+1}^{\prime}+L_{N+2}^{\prime}+L_{N+3}^{\prime
}+\cdots$. Actually, this can all be rephrased as a question on Hopf algebras.
Namely, for every $p\in\mathbb{N}$, let $\pi_{p}$ be the projection $T\left(
L\right)  \rightarrow L^{\otimes p}$ from the tensor algebra $T\left(
L\right)  $ onto its $p$-th graded component. Recall that $T\left(  L\right)
$ is a Hopf algebra, with comultiplication $\Delta:T\left(  L\right)
\rightarrow T\left(  L\right)  \otimes T\left(  L\right)  $ given by%
\begin{align*}
&  \Delta\left(  u_{1}u_{2}\cdots u_{k}\right) \\
&  =\sum_{p=0}^{k}\sum_{1\leq i_{1}<i_{2}<\cdots<i_{p}\leq k}\left(  u_{i_{1}%
}u_{i_{2}}\cdots u_{i_{p}}\right)  \otimes\left(  u_{1}u_{2}\cdots
\widehat{u_{i_{1}}}\cdots\widehat{u_{i_{2}}}\cdots\widehat{\cdots}%
\cdots\widehat{u_{i_{p}}}\cdots u_{k}\right)  ,
\end{align*}
where we are using the $\otimes$ symbol only for tensors inside $T\left(
L\right)  \otimes T\left(  L\right)  $, not for tensors inside $T\left(
L\right)  $ (those latter tensors are simply written as products). For every
$N\in\mathbb{N}$, the maps $\mathbf{t}_{N}^{\prime}:T\left(  L\right)
\rightarrow T\left(  L\right)  $ and $\left(  \pi_{N}\otimes\operatorname*{id}%
\right)  \circ\Delta:T\left(  L\right)  \rightarrow L^{\otimes N}\otimes
T\left(  L\right)  $ are therefore \textquotedblleft
equivalent\textquotedblright\ (i.e., they become equal if we canonically embed
$L^{\otimes N}\otimes T\left(  L\right)  $ into $T\left(  L\right)  $ via the
map $a\otimes b\mapsto ab$). As the consequence, their kernels are equal. In
other words,%
\[
\operatorname*{Ker}\left(  \mathbf{t}_{N}^{\prime}\right)
=\operatorname*{Ker}\left(  \left(  \pi_{N}\otimes\operatorname*{id}\right)
\circ\Delta\right)
\]
for every $N\in\mathbb{N}$. Now,%
\begin{align*}
K_{N}  &  =\operatorname*{Ker}\left(  \mathbf{t}_{1}^{\prime}\right)
\cap\operatorname*{Ker}\left(  \mathbf{t}_{2}^{\prime}\right)  \cap\cdots
\cap\operatorname*{Ker}\left(  \mathbf{t}_{N}^{\prime}\right) \\
&  =\bigcap_{p=1}^{N}\underbrace{\operatorname*{Ker}\left(  \mathbf{t}%
_{p}^{\prime}\right)  }_{\substack{=\operatorname*{Ker}\left(  \left(  \pi
_{p}\otimes\operatorname*{id}\right)  \circ\Delta\right)  \\\text{(as shown
above)}}}=\bigcap_{p=1}^{N}\operatorname*{Ker}\left(  \left(  \pi_{p}%
\otimes\operatorname*{id}\right)  \circ\Delta\right) \\
&  =\left\{  x\in T\left(  L\right)  \ \mid\ \Delta\left(  x\right)
\in1\otimes x+\sum_{p=N+1}^{\infty}L^{\otimes p}\otimes T\left(  L\right)
\right\}  .
\end{align*}
Now, why is $L_{N+1}^{\prime}+L_{N+2}^{\prime}+L_{N+3}^{\prime}+\cdots
\subseteq K_{N}$ ? It clearly suffices to show that $L_{u}^{\prime}\subseteq
K_{N}$ for any $u>N$. To prove this, we recall that the elements $x$ of
$L_{u}^{\prime}$ are primitive elements of $T\left(  L\right)  $ (this is the
easiest part of the Dynkin-Specht-Wever theorem); they therefore satisfy%
\begin{equation}
\Delta\left(  x\right)  =1\otimes x+\underbrace{x\otimes1}_{\substack{\in
\sum_{p=N+1}^{\infty}L^{\otimes p}\otimes T\left(  L\right)  \\\text{(since
}u>N\text{)}}}\in1\otimes x+\sum_{p=N+1}^{\infty}L^{\otimes p}\otimes T\left(
L\right)  , \label{todo.72}%
\end{equation}
which means that they lie in $K_{N}$.

It is also easy to prove that $K_{N}$ is a $\mathbf{k}$-subalgebra of
$T\left(  L\right)  $; this relies on the bialgebra axiom $\Delta\left(
x\right)  \Delta\left(  y\right)  =\Delta\left(  xy\right)  $ in the
$\mathbf{k}$-bialgebra $T\left(  L\right)  $.

Now (why) is $K_{N}$ actually the $\mathbf{k}$-subalgebra of $T\left(
L\right)  $ generated by $L_{N+1}^{\prime}+L_{N+2}^{\prime}+L_{N+3}^{\prime
}+\cdots$ ?

Ah, I see it, at least in the case when $\mathbf{k}$ is a field of
characteristic $0$. WLOG assume that $L$ is finite free. Consider the shuffle
Hopf algebra $\operatorname*{Sh}\left(  L^{\ast}\right)  =\bigoplus_{\ell
\in\mathbb{N}}\operatorname*{Sh}\nolimits_{\ell}\left(  L^{\ast}\right)  $
which is the graded dual of $T\left(  L\right)  $. This algebra
$\operatorname*{Sh}\left(  L^{\ast}\right)  $ is commutative. Now,%
\begin{align*}
K_{N}  &  =\bigcap_{p=1}^{N}\underbrace{\operatorname*{Ker}\left(  \left(
\pi_{p}\otimes\operatorname*{id}\right)  \circ\Delta\right)  }_{=\left(
\operatorname*{Sh}\nolimits_{p}\left(  L^{\ast}\right)  \cdot
\operatorname*{Sh}\left(  L^{\ast}\right)  \right)  ^{\perp}}=\bigcap
_{p=1}^{N}\left(  \operatorname*{Sh}\nolimits_{p}\left(  L^{\ast}\right)
\cdot\operatorname*{Sh}\left(  L^{\ast}\right)  \right)  ^{\perp}\\
&  =\left(  \sum_{p=1}^{N}\operatorname*{Sh}\nolimits_{p}\left(  L^{\ast
}\right)  \cdot\operatorname*{Sh}\left(  L^{\ast}\right)  \right)  ^{\perp}.
\end{align*}
Since $\sum_{p=1}^{N}\operatorname*{Sh}\nolimits_{p}\left(  L^{\ast}\right)
\cdot\operatorname*{Sh}\left(  L^{\ast}\right)  $ is an ideal of
$\operatorname*{Sh}\left(  L^{\ast}\right)  $, this yields (by basic
properties of coalgebras) that its orthogonal space $K_{N}$ is a subcoalgebra
of $T\left(  L\right)  $. Thus, $K_{N}$ is both a subalgebra and a
subcoalgebra of $T\left(  L\right)  $. Since $K_{N}$ is also graded, this
shows that $K_{N}$ is a connected graded $\mathbf{k}$-Hopf algebra. By the
Cartier-Milnor-Moore theorem, this yields that $K_{N}$ is generated by its
primitive elements. Now, its primitive elements belong to $L^{\otimes\left(
N+1\right)  }+L^{\otimes\left(  N+2\right)  }+L^{\otimes\left(  N+3\right)
}+\cdots$ (by the argument we made in (\ref{todo.72}), reversed) and thus
belong to $L_{N+1}^{\prime}+L_{N+2}^{\prime}+L_{N+3}^{\prime}+\cdots$ (due to
the Dynkin-Specht-Wever theorem in $T\left(  L\right)  $). So $K_{N}$ is the
$\mathbf{k}$-subalgebra of $T\left(  L\right)  $ generated by $L_{N+1}%
^{\prime}+L_{N+2}^{\prime}+L_{N+3}^{\prime}+\cdots$ when $\mathbf{k}$ is a
field of characteristic $0$.

Can we get rid of the requirement that $\mathbf{k}$ be a field? I hope so, but
this would require us use other methods. (For example, can we use the
coradical filtration of $T\left(  L\right)  $ ?)
\end{todo}

\end{document}